\theoremstyle{plain}
\newtheorem{thm}{Theorem}[section]
\newtheorem{lem}[thm]{Lemma}
\newtheorem{prop}[thm]{Proposition}
\newtheorem{rem}{Remark}[section]
\theoremstyle{definition}
\numberwithin{equation}{section}
\newcommand{\p}{{\partial}}
\begin{document}

\title[ convergence theory for MHD boundary layer]
 {Justification of Prandtl Ansatz for MHD boundary layer}
\author[Cheng-Jie Liu]{Cheng-Jie Liu}
\address{School of Mathematical Sciences, and Institute of Natural Sciences, Shanghai Jiao Tong University, Shanghai 200240, P.R. China}
\email{liuchengjie@sjtu.edu.cn}
\author[Feng Xie]{Feng Xie}
\address{School of Mathematical Sciences, and LSC-MOE,
 Shanghai Jiao Tong University,
Shanghai 200240, P.R.China}
\email{tzxief@sjtu.edu.cn}
\author[Tong Yang]{Tong Yang}
\address{Department of Mathematics,
City University of Hong Kong,
Tat Chee Avenue, Kowloon, Hong Kong}
\email{matyang@cityu.edu.hk}

\begin{abstract}
As a continuation of \cite{LXY}, the paper aims to justify the high Reynolds
numbers limit for the MHD system with Prandtl boundary layer expansion
 when no-slip boundary condition is
 imposed on velocity field and perfect conducting boundary condition on magnetic field.
  Under the assumption that the viscosity and resistivity coefficients are
  of the same order and the initial tangential
  magnetic field on the boundary is not degenerate, we justify the validity of the Prandtl boundary layer expansion and give a
 $L^\infty$ estimate on the error by multi-scale analysis.
\end{abstract}

\keywords{MHD boundary layer, high Reynolds
numbers limit, Prandtl boundary layer expansion, $L^\infty$ estimate}

\subjclass[2000]{76N20, 35A07, 35G31, 35M33}

\maketitle

\section{Introduction and Main Results} \label{S0}
For electrically conducting fluid such as plasmas
and  liquid metals, the system of magnetohydrodynamics(denoted by MHD) is a fundamental
system to describe the motion of fluid under the influence of
 electro-magnetic field. The study on the MHD was initiated by
Hannes Alfv\'en who showed that magnetic field can induce currents in
a moving conductive fluid with a new propagation
mechanism along the magnetic field, called Alfv\'en waves (see \cite{Alf}).
One important problem about MHD is to understand the inviscid and vanishing resistivity limit  in a domain with boundary.
The purpose of this paper is to justify this high Reynold numbers limit
when the tangential magnetic field is not degenerate on the boundary.

To this end, consider the following two-dimensional (2D) incompressible viscous MHD equations in the domain $\{(t,x,y)|t>0, x\in\mathbb{T}, y\in\mathbb{R}_+\}$,
\begin{align}\label{1.1}
\left\{
\begin{array}{ll}
\p_t \mathbf{u^\epsilon}+(\mathbf{u^\epsilon}\cdot\nabla)\mathbf{u^\epsilon}+\nabla p^\epsilon-(\mathbf{H^\epsilon}\cdot\nabla)\mathbf{H^\epsilon}=\mu\epsilon\triangle \mathbf{u^\epsilon},\\
\p_t \mathbf{H^\epsilon}+(\mathbf{u^\epsilon}\cdot\nabla)\mathbf{H^\epsilon}-(\mathbf{H^\epsilon}\cdot\nabla)\mathbf{u^\epsilon}=\kappa\epsilon\triangle \mathbf{H^\epsilon},\\
\nabla\cdot \mathbf{u^\epsilon}=0,\qquad \nabla\cdot \mathbf{H^\epsilon}=0.
\end{array}
\right.
\end{align}
Here $\mathbf{u^\epsilon}=(u^\epsilon, v^\epsilon)$ and $\mathbf{H^\epsilon}=(h^\epsilon, g^\epsilon)$ stand for the velocity field and magnetic field respectively, $p^\epsilon$ denotes the total pressure, the tangential variable is periodic: $x\in\mathbb{T}$, and the normal variable $y\in\mathbb{R}_+$. Also we assume $\mu,\kappa$ are positive constants, and the viscosity and resistivity coefficients are of
 the same order in a small parameter $\epsilon$. The initial data of (\ref{1.1}) is given by
\begin{align}\label{ID}
(\mathbf{u^\epsilon}, \mathbf{H^\epsilon})|_{t=0}=(\mathbf{u_0}, \mathbf{H_0})(x,y)=(u_0, v_0, h_0, g_0)(x,y)
\end{align}
independent of $\epsilon$. The no-slip boundary condition is imposed on velocity field, and the perfectly conducting boundary condition on magnetic field:
\begin{align}
\label{BCM}
\mathbf{u^\epsilon}|_{y=0}={\bf{0}},\quad (\p_yh^\epsilon, g^\epsilon)|_{y=0}={\bf{0}}.
\end{align}

The initial-boundary value problem \eqref{1.1}-\eqref{BCM} with fixed $\epsilon>0$ has been investigated and its global well-posedness is well known, see \cite{DL, ST} for instance. Let us also mention that there are vast literatures on the MHD system, in particular
in the case without boundaries, cf. \cite{A-Z, Bardos, C-L, C-W, CMRR, He,Lin, L-Z, Xu} and the references therein. 

In this paper, we are concerned with the asymptotic behavior of solutions 
$(\mathbf{u^\epsilon}, \mathbf{H^\epsilon})$ 
to problem \eqref{1.1}-\eqref{BCM} as $\epsilon\rightarrow0.$
Formally, when $\epsilon=0$, (\ref{1.1}) becomes
 the following incompressible ideal MHD system:
 \begin{align}\label{ILE0}
\left\{
\begin{array}{ll}
\p_t \mathbf{u^0}+(\mathbf{u^0}\cdot\nabla)\mathbf{u^0}+\nabla p^0-(\mathbf{H^0}\cdot\nabla)\mathbf{H^0}=\mathbf{0},\\
\p_t \mathbf{H^0}+(\mathbf{u^0}\cdot\nabla)\mathbf{H^0}-(\mathbf{H^0}\cdot\nabla)\mathbf{u^0}=\mathbf{0},\\
\nabla\cdot \mathbf{u^0}=0,\qquad \nabla\cdot \mathbf{H^0}=0,
\end{array}
\right.
\end{align}
where the velocity field $\mathbf{u^0}=(u^0,v^0)$ and magnetic field $\mathbf{H^0}=(h^0,g^0)$. Naturally, we endow (\ref{ILE0}) with homogeneous Dirichlet boundary condition on normal components of velocity and magnetic field:
\begin{align}
\label{IBE}
(v^0, g^0)|_{y=0}={\bf 0}.
\end{align}
Note that such boundary condition \eqref{IBE} is sufficient to solve (\ref{ILE0}), since the boundary $\{y=0\}$ is the streamline for \eqref{ILE0} under \eqref{IBE}.

Comparing the boundary conditions \eqref{BCM} with (\ref{IBE}),  there is a  mismatch between the tangential component $(u^\epsilon, h^\epsilon)(t,x,y)$ and $(u^0, h^0)(t,x,y)$ on the boundary $\{y=0\}$. According to the classical Prandtl boundary layer theory \cite{P}, there
is a thin layer of width of order
$\sqrt{\epsilon}$ near the boundary, in which $(u^\epsilon, h^\epsilon)$
changes dramatically from its boundary data to the outer flow $(u^0_e, h^0_e)(t,x, y)$.  In other words, there exist boundary layer profiles $u_b^0(t,x,\frac{y}{\sqrt{\epsilon}})$ and $h_0^b(t,x,\frac{y}{\sqrt{\epsilon}})$, such that the solution to the 
problem (\ref{1.1})-(\ref{BCM}) is expected to have the form:
\begin{align}\label{AA}
\begin{cases}
	(\mathbf{u^\epsilon}, \mathbf{H^\epsilon})(t,x,y)&=~(\mathbf{u^0}, \mathbf{H^0})(t,x,y)+(u_b^0, 0, h_b^0, 0)(t,x,\frac{y}{\sqrt{\epsilon}})+\small{o}(1),\\
\hspace{.2in} p^\epsilon(t,x,y)&=~ p^0(t,x,y)+\small{o}(1),
\end{cases}
\end{align}
where the error terms $\small{o}(1)$ tends to zero in $L^\infty$-norm as $\epsilon$ tends to zero.
 
We are devoted to verify the Prandtl boundary layer expansion in \eqref{AA} for the MHD system \eqref{1.1}-\eqref{BCM}. 
Let us first review some mathematical results on the classical boundary layer theory. It is well known that in both physics and mathematics, the study on fluid around a rigid body with high Reynolds number is important and challenging, and the fluid motion exhibit rather complicated behaviors, especially near the surface of body. This is partially due to the appearance of boundary layers, whose formation is formally explained by Prandtl \cite{P} in 1904. Prandtl also  derived the simplified equations, the well-known Prandtl equations, from the
incompressible Navier-Stokes equations with no-slip boundary condition on velocity, to describe the fluid motion in the boundary layer.
Under the monotone assumption on the tangential velocity in the normal direction,  Oleinik firstly in 1960s'
obtained the local existence of classical solutions of 2D Prandtl equations
by using the Crocco transformation, cf.  \cite{O}.
 The result
together with some other related works  are well written in the
 classical
book \cite{OS}. Recently, this well-posedness result was
established in the Sobolev spaces by using  energy methods in \cite{AWXY} and \cite{MW1} independently.
 Moreover, by imposing an additional
favorable condition on the pressure,
a global in time weak solution was obtained in
\cite{XZ}. Some of these results were
generalized to 3D case with special structure  in \cite{LWY1} and \cite{LWY3}.
In the absence of monotonicity condition, boundary separation can be observed, and so far we only gain the local-in-time solvability of the Prandtl equations in the analytic framework \cite{LCS,SC1, IV,KV,KMVW,ZZ} or Gevrey framework \cite{GM,LWX,LY,LY1}. To our knowledge, the solvability of Prandtl equations for general initial data in a Sobolev class is still open,
although some interesting ill-posedness (or instability) results for 
Prandtl equations have been established, cf.
 \cite{EE,GD,GN,G, GGN2, GGN3,GN1, LWY2, L-Y, KV1} and the references therein.
On the other hand, the rigorous verification of the Prandtl boundary layer theory, i.e., the
solution to the Navier-Stokes equations as a superposition of
solutions to the Euler and Prandtl systems in vanishing viscosity limit, was
achieved only for some specific cases, e.g., in
the analytic framework in \cite{SC1,SC2, WWZ}. In 2014, the convergence problem in 2D case was studied by Maekawa \cite{M}
that requires that the vorticity of flow vanishes
 in a neighborhood of boundary initially, and such condition implies the analyticity of the initial data with respect to the tangential variable in the same region. 
In 2016, the authors in \cite{GMM} improved the results of Sammartino \& Caflisch \cite{SC1,SC2} in Gevery class. For the steady case, Guo \& Nguyen in \cite{GN2} justified the Prandtl boundary layer expansions for the steady Navier-Stokes flows over a moving plate, and similar results for steady flows were obtained in \cite{Iy1,Iy2,Iy3}. Very recently, G\'erard-Varet \& Maekawa in \cite{G-M} shows the $H^1$ stability of shear flows of Prandtl type and verifies the Prandtl expansions for the steady Navier-Stokes equations with no-slip boundary condition. 
 It is remarkable that the solvability of Prandtl equations does not necessarily imply the validity of corresponding Prandtl boundary layer expansions, see the counterexamples in \cite{G, G-N1, G-N2}.

For plasma, the boundary layer equations can be derived from the fundamental
MHD system and they are more complicated than the classical Prandtl system because
of the coupling of magnetic field with velocity field through the
Maxwell equations. 
It should be emphasized that the MHD boundary layer is an important
problem in study of  plasma with fruitful results, cf. \cite{A,DL,D,G-P, R, WX, XXW}.
On one hand, if the magnetic field is transversal to the boundary,
there are extensive discussions on the so-called Hartmann boundary layer,
cf. \cite{C-P, davidson}. 
On the other hand, if the magnetic field is tangent to the boundary, it is just the case we are concerned with in this paper. 
Note that in physics, it is believed
that the magnetic field has a stabilizing effect on the boundary layer that
could provide a mechanism for containment of some kind of instability and singularity. 
Recently, the same authors of this paper established the well-posedness of MHD boundary layer equations in weighted Sobolev spaces without  monotonicity condition on the velocity in \cite{LXY}. The key assumption is that tangential magnetic field is not degenerate on the boundary. As the continuation
of \cite{LXY}, we  study the high Reynolds numbers limit problem for (\ref{1.1})-\eqref{BCM}. 
Precisely, by applying the
 multi-scale expansion of $(\mathbf{u^\epsilon}, \mathbf{H^\epsilon})$ in Sections 2 and 3, we will
 justify the validity of the Prandtl boundary layer theory in (\ref{AA}) under
the non-degeneracy condition on the initial tangential magnetic field on the boundary.
As it is well known that the Alfv\'en wave propagates along the magnetic field, this result in some sense justifies the physical phenomena that the Alfv\'en wave
along the boundary carries away the energy so that it stabilizes the boundary layer rigorously in mathematics.

We are now ready to state main result in this paper as follows.

\begin{thm}\label{MAIN THM}
Let $m\geq 36$ be an integer. Let the initial data $
(\mathbf{u_0},\mathbf{H_0})(x,y)\in H^m(\mathbb{T}\times\mathbb{R}^+)$ satisfy:
\begin{enumerate}[i)]
	\item $\mathbf{u_0}=(u_0,v_0)$ is a divergence free vector field vanishing on the boundary, and $\mathbf{H_0}=(h_0,g_0)$ is a divergence free vector field tangent to the boundary; 
	\item there exists a small $\epsilon_0>0$ such that the following `strong' compatibility conditions hold for any $\epsilon\in[0,\epsilon_0]$,
	$$\partial_t^i(u^\epsilon,v^\epsilon,g^\epsilon)(0)|_{y=0}=\mathbf{0},~\partial_t^{i-1}\partial_yh^\epsilon(0)|_{y=0}=0,\quad 1\leq i\leq [\frac{m}{2}]-3,$$
	where $[k], k\in\mathbb{R}$ stands for the largest integer less than or equal to $k$, and $\partial_t^i(u^\epsilon,v^\epsilon,h^\epsilon, g^\epsilon)(0)$ is the $i-$th time derivative at $\{t=0\}$ of any solution of \eqref{1.1}-\eqref{BCM}, as calculated from \eqref{1.1} to yield an expression in terms of derivatives of $(\mathbf{u_0},\mathbf{H_0})$;
	\item the initial tangential magnetic filed is non-degenerate:
\begin{align}\label{ass_key}
	h_0(x,0)\geq \delta_0>0\quad\mbox {for some constant}~ \delta_0.
\end{align} 
\end{enumerate}
 Then there exists  $T_*>0$ independent of $\epsilon$ such that, the problem (\ref{1.1})-\eqref{BCM} admits a solution $(\mathbf{u^\epsilon}, \mathbf{H^\epsilon})$ in the time interval $[0,T_*]$, and there exists a smooth solution $(\mathbf{u^0}, \mathbf{H^0})(t,x,y)\in C\big([0,T_*],H^m\big)$ to the problem \eqref{ILE0}-\eqref{IBE} with the initial data $(\mathbf{u_0},\mathbf{H_0})$, and a boundary layer profile
 \[(u_b^0, v_b^0, h_b^0, g_b^0)\in C\big([0,T_*]\times\mathbb{T}\times\mathbb{R}^+\big),\]
  such that for any arbitrarily small $\sigma>0$,
\begin{align}\label{ASP}
\begin{aligned}
	&\sup_{0\leq t\leq T_*}
\big\|(\mathbf{u^{\epsilon}},\mathbf{H^{\epsilon}})(t,x,y)-(\mathbf{u^0},\mathbf{H^0})(t,x,y)-\big(u^0_b, \sqrt{\epsilon}v^0_b, h^0_b, \sqrt{\epsilon}g^0_b\big)\big(t,x,\frac{y}{\sqrt{\epsilon}}\big)\big\|_{L^\infty_{xy}}
\leq~ C\epsilon^{3/8-\sigma},
\end{aligned}
\end{align}
where the constant $C>0$ is independent of $\epsilon$.
\end{thm}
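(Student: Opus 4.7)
My strategy is the standard program for justifying Prandtl-type expansions: build an approximate solution of sufficiently high order by a multi-scale ansatz, then close a uniform-in-$\epsilon$ energy estimate for the remainder. Guided by \eqref{AA}, I would postulate the composite expansion
\[
\mathbf{u^\epsilon}(t,x,y)\sim\sum_{k\geq 0}\epsilon^{k/2}\Big(\mathbf{u}^k(t,x,y)+\mathbf{u}^k_b\big(t,x,\tfrac{y}{\sqrt{\epsilon}}\big)\Big),
\]
and analogously for $\mathbf{H^\epsilon}$, with the normal boundary-layer components $v^k_b, g^k_b$ shifted by one order in $\sqrt{\epsilon}$ so as to respect incompressibility and the boundary conditions $v^\epsilon|_{y=0}=g^\epsilon|_{y=0}=0$. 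Substituting into \eqref{1.1} and matching equal powers of $\sqrt\epsilon$ produces a cascade: at leading order $(\mathbf u^0,\mathbf H^0)$ solves the ideal MHD system \eqref{ILE0}--\eqref{IBE}, which is locally well-posed in $H^m$ by classical theory; the leading boundary-layer profile $(u^0_b,h^0_b)$ solves a two-dimensional MHD Prandtl system which, by \cite{LXY}, is locally well-posed in weighted Sobolev spaces precisely under the non-degeneracy assumption \eqref{ass_key}; and each higher-order profile is determined from a linear transport-diffusion problem with source given by lower-order terms, together with the matching conditions $(\mathbf u^k_b,\mathbf H^k_b)\to 0$ as $y/\sqrt\epsilon\to\infty$ and compatibility of the boundary traces at $y=0$ with \eqref{BCM}. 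I would terminate the expansion at an order $N=N(m)$ large enough that the truncated approximate solution $(\mathbf{u}^\epsilon_{\rm app},\mathbf{H}^\epsilon_{\rm app})$ leaves a forcing of order $\epsilon^{N/2+1}$ when reinserted into \eqref{1.1}.

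Writing $(\mathbf{u^\epsilon},\mathbf{H^\epsilon})=(\mathbf{u}^\epsilon_{\rm app},\mathbf{H}^\epsilon_{\rm app})+\epsilon^\gamma(\mathbf R_u,\mathbf R_H)$ for an appropriate $\gamma>0$, the pair $(\mathbf R_u,\mathbf R_H)$ then satisfies a linearized MHD system about the approximate state, with coefficients whose normal derivatives blow up like $\epsilon^{-1/2}$ on the boundary-layer scale. The heart of the argument is to propagate uniform-in-$\epsilon$ control of $(\mathbf R_u,\mathbf R_H)$ in a sufficiently smooth tangential/conormal Sobolev norm on a time interval $[0,T_*]$ independent of $\epsilon$, imitating the energy strategy of \cite{LXY}. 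The ostensibly most singular contributions, $R_v\,\partial_y u^0_b$ and $R_g\,\partial_y h^0_b$, each scale like $\epsilon^{-1/2}$, and it is precisely these that obstruct any naive Navier-Stokes analogue; the saving cancellation comes from combining the equations for $u$ and $h$ into a good unknown weighted by the leading tangential magnetic field. Since $h^0(t,x,0)+h^0_b(t,x,0)$ stays bounded below by $\delta_0/2$ for a short time by continuity from \eqref{ass_key}, this Alfv\'en-wave-type cancellation eliminates the worst contributions and lets the energy estimate close; curl-type estimates for the normal derivatives, combined with pressure recovery via the divergence-free constraint, complete the bound.

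The passage from Sobolev smallness of the remainder to the pointwise bound \eqref{ASP} is carried out via a two-dimensional Gagliardo-Nirenberg inequality of the form $\|f\|_{L^\infty}\lesssim \|f\|_{L^2}^{1/2}\|\Delta f\|_{L^2}^{1/2}$, taking into account that each normal derivative of a boundary-layer profile costs a factor of $\epsilon^{-1/2}$; the exponent $3/8-\sigma$ in \eqref{ASP} emerges from balancing the order $\epsilon^\gamma$ of $L^2$-smallness of $(\mathbf R_u,\mathbf R_H)$ against these normal-derivative losses, and $\sigma$ can be driven to zero by pushing the expansion to higher order $N$, which is what forces the regularity requirement $m\geq 36$. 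The principal obstacle I foresee is exactly the uniform-in-$\epsilon$ stability of the linearization around the composite profile: for the corresponding Navier-Stokes problem this is generically false, so the entire argument hinges on cleanly extracting the cancellation produced by the non-degenerate tangential magnetic field at the level of high-order tangential derivatives, while simultaneously tracking the normal weights needed to absorb the $\epsilon^{-1/2}$-losses and to preserve compatibility with the pressure and divergence-free constraints.
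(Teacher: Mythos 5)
Your overall program — construct a multi-scale approximate solution, identify the $\epsilon^{-1/2}$ singular interactions $v\,\partial_y u^0_b$ and $g\,\partial_y h^0_b$ as the obstruction, and exploit the tangential magnetic field to cancel them — matches the paper's in spirit. But two steps in your sketch contain genuine gaps.

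First, you gloss over the central difficulty that the paper spends an entire subsection overcoming. The "natural" cancellation transformation, subtracting $\tfrac{\partial_y u^p}{h^p}\psi$ from $u$ and $\tfrac{\partial_y h^p}{h^p}\psi$ from $h$ (where $\psi$ is the stream function of the magnetic remainder), does kill the $\epsilon^{-1/2}$ terms — but it destroys the divergence-free structure of the velocity, so $\partial_x\hat u+\partial_y v\neq 0$. Once that happens you cannot "recover the pressure via the divergence-free constraint" as you suggest: the pressure estimate becomes a genuine obstruction, just as it is for the Navier--Stokes analogue, and the energy closure fails. The paper's key step is the refined transformation $\tilde u := u-\partial_y(a^p\psi)$, $\tilde v := v+\partial_x(a^p\psi)$, $\tilde h := h - b^p\psi$, with $a^p = \chi(y)u^p/h^p$ and $b^p = \partial_y h^p/h^p$, which simultaneously retains the cancellation mechanism (to leading order $\tilde u$ agrees with $\hat u$) \emph{and} keeps $\partial_x\tilde u+\partial_y\tilde v=0$, so the pressure term is still annihilated by integration by parts in the energy identity. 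Without this step your argument does not close, and it is not something one stumbles into by combining "the equations for $u$ and $h$ into a good unknown weighted by $h^p$."

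Second, the passage to $L^\infty$ via $\|f\|_{L^\infty}\lesssim\|f\|_{L^2}^{1/2}\|\Delta f\|_{L^2}^{1/2}$ cannot be run here, because the energy scheme never controls $\|\Delta U\|_{L^2}$ uniformly in $\epsilon$; even the first normal derivative $\nabla U$ is controlled only in the weak dissipative sense $\epsilon\int_0^t\|\nabla U\|_{L^2}^2\,ds\leq C$. The paper instead proves four anisotropic bounds — $\|U\|_{L^2}=O(1)$, $\epsilon\|U_x\|_{L^2}^2=O(1)$, $\epsilon\|U_t\|_{L^2}^2=O(1)$, $\epsilon^2\|U_{tx}\|_{L^2}^2=O(1)$, together with the corresponding $\epsilon$-weighted time-integrated bounds on $\nabla U,\nabla U_x,\nabla U_t,\nabla U_{tx}$ — and then chains together one-dimensional Sobolev/interpolation inequalities in $t$, $x$ and $y$ separately. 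The exponent $3/8-\sigma$ and the loss $\sigma$ come from the $\lambda$-parameters in this interpolation chain, not from the truncation order of the ansatz; the paper's approximation only goes to first order in $\sqrt\epsilon$, and $m\geq 36$ is driven by the profile-regularity and coefficient-derivative needs in those anisotropic estimates, not by "pushing $N$ to drive $\sigma$ to zero." Your version of that last claim is also probably not achievable as stated: adding higher-order correctors improves the remainder's $L^2$ smallness, but the interpolation loss in passing to $L^\infty$ is structural and does not vanish with $N$.
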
\begin{rem}
	The assumptions on the regularity and compatibility conditions of the initial data $(\mathbf{u_0}, \mathbf{H_{0}})$ are not optimal. Here, we mainly utilize such assumptions to verify Proposition \ref{PROP2} in Subsection \ref{S2}. One may relax the requirement on the regularity and compatibility conditions.
\end{rem}

Finally, we would like to comment why the justification of the high
Reynolds numbers limit can be achieved for MHD in the framework of
Sobolev space, but the corresponding problem for incompressible
Navier-Stokes equations remains open. Note that after receiving the solvability of boundary layer equations, the key issue to verify the Pandtl ansatz is to control the interaction, mainly in the boundary layer, between the vorticity produced by the boundary layer and the outer vorticity generated by the initial one.  For the Navier-Stokes equations, some essential  cancellations are observed
in \cite{AWXY} and \cite{MW1} for recovering the loss of derivatives in the well-posedness theory of classical Prandtl equations. If we use these cancellations to govern the above interaction, it will destroy the divergence free
structure of newly introduced unknown functions for velocity field, and then the uniform
estimation on the pressure function becomes a challenging and unsolved
problem. However, for MHD system, the newly
observed cancellation mechanism for MHD boundary layer equations in \cite{LXY} not only can be utilized to control the desired interaction,
but also preserves the divergence free condition of the newly defined unknown function for velocity field.
It needs to be emphasized that in this analysis,  the non-degeneracy condition on the tangential magnetic field plays an essential role.

This paper is organized as follows: In Section 2, we will construct a
suitable approximation solution and derive some necessary estimates. In Section 3, the error of the approximation is estimated in $L^\infty$-norm
for the proof of Theorem \ref{MAIN THM}. Finally, to make the paper self-contained, we will provide several proofs and computations in the appendix.

\section{Construction of approximate solution} \label{S2}

To prove Theorem \ref{MAIN THM}, we need to construct high order approximate solutions to the problem (\ref{1.1})-\eqref{BCM}. Precisely, we take the forms of approximate solutions as follows:
\begin{align}\label{2.1}
\left\{
\begin{array}{ll}
(\mathbf{u^a},\mathbf{H^a})(t,x,y)&=(\mathbf{u^0},\mathbf{H^0})(t,x,y)+\big(u^0_b, \sqrt{\epsilon} v_b^0, h^0_b, \sqrt{\epsilon} g_b^0\big)\big(t,x,\frac{y}{\sqrt{\epsilon}}\big)\\
&\quad+\sqrt{\epsilon}\Big[(\mathbf{u^1},\mathbf{H^1})(t,x,y)+\big(u^1_b, \sqrt{\epsilon} v_b^1, h^1_b, \sqrt{\epsilon} g_b^1\big)\big(t,x,\frac{y}{\sqrt{\epsilon}}\big)\Big],\\
\hspace{.38in} p^a(t,x,y)&=p^0(t,x,y)+\sqrt{\epsilon}p^1(t,x,y)+\epsilon p^1_b(t,x,\frac{y}{\sqrt{\epsilon}}),
\end{array}
\right.
\end{align}
where 
the functions with subscript $b$ denote the boundary layer profile. In the next six subsections, we will give the construction of the profiles in the above approximation \eqref{2.1}.

Keep in mind that the fast variable $\eta=\frac{y}{\sqrt{\epsilon}}$, and in the following derivation we assume first that for $i=0,1$,
\begin{align*}
\lim_{\eta\rightarrow +\infty}(u_b^i, v_b^i, h_b^i, g_b^i)(t,x,\eta)={\bf{0}}, \quad \lim_{\eta\rightarrow +\infty}p_b^1(t,x,\eta)=0,
\end{align*}
which means the boundary layer profiles decay  to zero away from the boundary. 

\subsection{Zeroth-order inner flow}\label{S1}

From the arguments in the previous section, we know that the leading order inner flow $(\mathbf{u^0}, \mathbf{H^0}, p^0)(t,x,y)$ in the ansatz \eqref{2.1} satisfies the following initial-boundary value problem for incompressible ideal MHD equations,
 \begin{align}\label{LE0}
\left\{
\begin{array}{ll}
\p_t \mathbf{u^0}+(\mathbf{u^0}\cdot\nabla)\mathbf{u^0}+\nabla p^0-(\mathbf{H^0}\cdot\nabla)\mathbf{H^0}=\mathbf{0},\\
\p_t \mathbf{H^0}+(\mathbf{u^0}\cdot\nabla)\mathbf{H^0}-(\mathbf{H^0}\cdot\nabla)\mathbf{u^0}=\mathbf{0},\\
\nabla\cdot \mathbf{u^0}=0,\qquad \nabla\cdot \mathbf{H^0}=0,\\
(v^0, g^0)|_{y=0}=\mathbf{0}, \quad (\mathbf{u^0}, \mathbf{H^0})|_{t=0}=(\mathbf{u_0}, \mathbf{H_0})(x,y)
\end{array}
\right.
\end{align}
in the domain $\{(t,x,y)|t>0, x\in\mathbb{T}, y\in\mathbb{R}_+\}$, where the velocity field $\mathbf{u^0}=(u^0,v^0)$ and magnetic field $\mathbf{H^0}=(h^0,g^0)$. Note that the equations of \eqref{LE0} can also be obtained by putting the ansatz (\ref{2.1}) into (\ref{1.1}), setting the terms of order $\epsilon^0$ equal to zero and letting the fast variable $\eta\rightarrow +\infty$.

The well-posedness of problem (\ref{LE0}) is guaranteed by the results in \cite{Secchi, YM} that can be stated as follows.
\begin{prop}\label{PROP1.1}
Let $m>1$ be a integer, and let the initial data $(\mathbf{u_0}, \mathbf{H_0})(x,y)\in H^m(\mathbb{T}\times\mathbb{R}_+)$ 
satisfy that $\mathbf{u_0}$ and $\mathbf{H_0}$ are divergence free vector fields tangent to the boundary.
 Then there exists a time $T_0>0$ and a smooth solution $(\mathbf{u^0}, \mathbf{H^0}, p^0)(t,x,y)$ to (\ref{LE0}) satisfying
\begin{align*}
(\mathbf{u^0}, \mathbf{H^0}, \nabla p^0)(t,x,y)\in&~\bigcap_{j=0}^{m}C^{j}\big([0,T_0]; H^{m-j}(\mathbb{T}\times\mathbb{R}_+)\big),
\end{align*}
\end{prop}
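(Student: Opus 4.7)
The plan is to treat \eqref{LE0} as a symmetric quasi-linear hyperbolic system with the pressure acting as a Lagrange multiplier enforcing incompressibility, and to establish local existence via a vanishing viscosity regularization together with uniform $H^m$ energy estimates. The structural feature that makes the energy method close is the Els\"asser symmetrization: setting $\mathbf{z}^{\pm}=\mathbf{u}^0\pm \mathbf{H}^0$, the momentum and induction equations of \eqref{LE0} combine into
\begin{equation*}
\partial_t \mathbf{z}^{\pm}+(\mathbf{z}^{\mp}\cdot\nabla)\mathbf{z}^{\pm}+\nabla p^0=\mathbf{0},\qquad \nabla\cdot \mathbf{z}^{\pm}=0,
\end{equation*}
in which the advecting fields $\mathbf{z}^{\mp}$ are divergence free and the boundary condition $(v^0,g^0)|_{y=0}=\mathbf{0}$ becomes $\mathbf{z}^{\pm}\cdot\mathbf{n}|_{y=0}=0$, so $\{y=0\}$ is a streamline of both transport fields and the boundary conditions are automatically preserved by the flow.

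The concrete steps are: (i) regularize by adding a small parabolic dissipation $\nu\Delta(\mathbf{u}^0,\mathbf{H}^0)$ to the evolution equations together with compatible slip-type conditions, and obtain a smooth solution on a short time interval by a standard fixed-point argument; (ii) close uniform-in-$\nu$ $H^m$ bounds by first applying tangential derivatives $\partial_x^\alpha$ (which preserve the boundary condition), testing against $\partial_x^\alpha \mathbf{z}^{\pm}$, and bounding the commutators $[\partial_x^\alpha,\mathbf{z}^{\mp}\cdot\nabla]$ by Moser-type Sobolev inequalities, yielding a differential inequality of the form $\tfrac{d}{dt}E_m\leq C\, E_m^{3/2}+(\text{pressure contribution})$; (iii) recover the pressure from the elliptic Neumann problem
\begin{equation*}
-\Delta p^0=\sum_{i,j}\big(\partial_i u^0_j\,\partial_j u^0_i-\partial_i h^0_j\,\partial_j h^0_i\big),\qquad \partial_y p^0\big|_{y=0}=\big(h^0\cdot\nabla g^0-u^0\cdot\nabla v^0\big)\big|_{y=0},
\end{equation*}
obtained by taking the divergence of the momentum equation and reading the normal trace from its restriction to $\{y=0\}$, and then apply standard half-space elliptic theory to bound $\|\nabla p^0\|_{H^{m-1}}$ in terms of the fields; (iv) reconstruct normal derivatives of $(\mathbf{u}^0,\mathbf{H}^0)$ algebraically from the divergence-free conditions and iteratively from the equations, so that tangential regularity controls the full $H^m$ norm; (v) pass $\nu\to 0$ via weak-$\ast$ compactness and strong tangential convergence (Aubin--Lions). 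Uniqueness follows from a standard $L^2$ difference estimate on two solutions, and the temporal regularity $\bigcap_{j=0}^{m}C^j([0,T_0];H^{m-j})$ is obtained by differentiating the equations in time and recursively expressing $\partial_t^j$ derivatives in terms of spatial ones.

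The main obstacle, in contrast with the whole-space case, is the presence of the boundary: only tangential derivatives commute with the boundary condition, so one cannot symmetrically integrate by parts on all derivatives, and the Neumann datum for $p^0$ couples the pressure estimate to the boundary behaviour of the nonlinear terms, requiring trace control of $\nabla(\mathbf{u}^0,\mathbf{H}^0)$ at $\{y=0\}$. This difficulty is exactly what the scheme above resolves (tangential estimates first, normal derivatives reconstructed from the equations, pressure by elliptic theory), and since the argument is by now classical for incompressible ideal MHD in a half-space, the stated conclusion is available from \cite{Secchi,YM}; the proof above essentially reproduces their analysis in the present geometry with tangential periodicity.
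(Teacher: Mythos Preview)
Your proposal is correct and in fact more detailed than what the paper provides: the paper does not give an independent proof of this proposition but simply states it as a direct consequence of the results in \cite{Secchi,YM}. Your outline of the Els\"asser symmetrization, tangential energy estimates with normal-derivative reconstruction, and the Neumann pressure problem is the standard route those references follow, so there is no discrepancy to discuss.
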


\begin{rem}\label{rem_e0}
When the initial data of \eqref{LE0} satisfy the assumption of initial data in  Theorem \ref{MAIN THM}, we can gain more information on the trace of the tangential components $u^0$ and $h^0$, which will be used to ensure the compatibility conditions of the problems investigated in the following subsections. 
More precisely, 
\begin{align}\label{ass-e}
 u^0(t,x,0)|_{t=0}=u_0(x,0)=0,\quad h^0(t,x,0)|_{t=0}=h_0(x,0)\geq\delta_0>0
\end{align}
 and
\begin{align}\label{ass-e1}
\partial_t^i u^0(t,x,0)|_{t=0}=0,\quad \partial_t^{i-1}\partial_y h^0(t,x,0)|_{t=0}=0,\qquad 1\leq i\leq [\frac{m}{2}]-3.
\end{align}
 Then,
by the properties of the solution $(\mathbf{u^0}, \mathbf{H^0}, p^0)(t,x,y)$ established in Proposition \ref{PROP1.1}, it is not hard to see that there exists a time $T_1 \leq T_0$, such that the boundary value $h^0(t, x,0)\geq \frac{\delta_0}{2}$ for all $t\in[0,T_1]$. Moreover, the trace theorem yields
\begin{equation}\label{ass_outflow}
\sup_{0\leq t\leq T_1}\sum_{i=0}^{m-1}\big\|\p_t^i(u^0, h^0, \partial_x p^0)(t,x,0)\big\|_{H^{m-1-i}(\mathbb{T}_x)}
<+\infty.
	\end{equation}
\end{rem}
After establishing the leading order inner profile $(\mathbf{u^0}, \mathbf{H^0}, p^0)(t,x,y)$, we now turn to construct the leading order MHD boundary layer profile.

\subsection{Zero-order boundary layer}\label{S2}

As in \cite{LXY}, we know that the zero-order MHD boundary layer profile 
$(u^0_b, v^0_b, h^0_b, g^0_b)(t, x,\eta)$ 
is given by
\begin{equation}\label{bl_def}
\begin{cases}
	(u^0_b,h^0_b)(t,x,\eta)~:=~(u^p,h^p)(t,x,\eta)-(u^0,h^0)(t,x,0),\\
	v^0_b(t,x,\eta)~:=~\int_\eta^\infty\partial_x u^0_b(t,x,\tilde\eta)d\tilde\eta,\quad g^0_b(t,x,\eta)~:=~\int_\eta^\infty\partial_x h^0_b(t,x,\tilde\eta)d\tilde\eta,
\end{cases}\end{equation}
and $(u^p, h^p)(t, x,\eta)$ can be solved by the following boundary layer system:
\begin{equation}\label{LB01}
\begin{cases}
\partial_t u^p+(u^p\partial_x+v^p\partial_\eta)u^p-(h^p\partial_x+g^p\partial_\eta)h^p=\mu\partial_\eta^2 u^p-\partial_x p^0(t,x,0),\\
\partial_t h^p+(u^p\partial_x+v^p\partial_\eta)h^p-(h^p\partial_x+g^p\partial_\eta)u^p
=\kappa \partial_\eta^2h^p,\\
\partial_x u^p+\partial_\eta v^p=0,\quad\partial_x h^p+\partial_\eta g^p=0,\\
(u^p, v^p,\partial_\eta h^p, g^p)|_{\eta=0}=\mathbf 0,
\quad \lim\limits_{\eta\rightarrow+\infty}(u^p, h^p)(t,x,\eta)=(u^0,h^0)(t,x,0),\\
(u^p, h^p)|_{t=0}=(u^0,h^0)(0,x,0)=\big(0, h_0(x,0)\big)
\end{cases}\end{equation}
in the domain $\{(t,x,\eta)|t\in[0,T_0],  x\in\mathbb{T}, \eta\in\mathbb{R}_+\}$, where we have used \eqref{ass-e} in the above initial data. 
Moreover, it follows that from \eqref{ass-e},
\begin{align}\label{positive-ini}
	h^p(0,x,\eta)~\geq~\delta_0~>~0.
\end{align}

By the main theorem in \cite{LXY}, we have the local well-posedness theory of solutions to the initial-boundary value problem (\ref{LB01}). Before we state the well-posedness theorem,
let us introduce some weighted Sobolev spaces used in this subsection. Denote by
\begin{align*}
\Omega:=\big\{(x,\eta):x\in\mathbb{T},\quad\eta\in\mathbb{R}_+\big\}.
\end{align*}
For any $l\in\mathbb{R},$  denote by $L_l^2(\Omega)$ the weighted Lebesgue space with respect to the spatial variables:
\begin{align*}
L_l^2(\Omega):=\Big\{f(x,\eta):\Omega\rightarrow\mathbb{R},\quad
\|f\|_{L^2_l(\Omega)}:=\Big(\int_{\Omega}\langle \eta\rangle^{2l}|f(x,\eta)|^2dx d\eta\Big)^{\frac{1}{2}}<+\infty\Big\},\qquad \langle \eta\rangle=1+\eta,
\end{align*}
and then, for any given $m\in\mathbb{N},$ denote by $H_l^m(\Omega)$ the weighted Sobolev spaces:
$$
H_l^m(\Omega)~:=~\Big\{f(x,\eta):~\Omega\rightarrow\mathbb{R},~\|f\|_{H_l^m(\Omega)}:=\Big(\sum_{m_1+m_2\leq m}\|\langle \eta\rangle^{l+m_2}\p_x^{m_1}\p_\eta^{m_2}f\|_{L^2(\Omega)}^2\Big)^{\frac{1}{2}}<+\infty\Big\}.
$$
Combining Remark \ref{rem_e0} with the condition \eqref{positive-ini}, we have the following result by the main theorem in \cite{LXY}.
\begin{prop}\label{PROP2}
Let $(\mathbf{u^0}, \mathbf{H^0}, p^0)(t,x,y)$ be the leading order inner flow with the initial data $(\mathbf{u_0}, \mathbf{H_0})$ which satisfy the assumptions of Theorem \ref{MAIN THM}. 
Then, there exist a positive time $0<T_2\leq T_1$ and a unique solution $(u^p, v^p, h^p, g^p)(t,x,\eta)$ to the initial boundary value problem (\ref{LB01}), such that
\begin{align}\label{positive-hp}
	h^p(t,x,\eta)~\geq~\frac{\delta_0}{2},\quad \forall~ (t,x,\eta)\in[0,T_2]\times\Omega
\end{align}
with the constant $\delta_0>0$ given in \eqref{positive-ini}, and for any $l\geq0,$
\begin{align}\label{est_main1}
	\big(u^p(t,x,\eta)-u^0(t,x,0), h^p(t,x,\eta)-h^0(t,x,0)\big)&\in\bigcap_{i=0}^{[m/2]-1}W^{i,\infty}\Big(0,T_2;H_l^{[m/2]-1-i}(\Omega)\Big).
	\end{align}
Moreover, it holds that for the profile $(u_b^0, v_b^0, h_b^0, g_b^0)(t,x,\eta)$ defined by \eqref{bl_def},
\begin{align}
	(u_b^0, h_b^0)(t,x,\eta)&\in\bigcap_{i=0}^{[m/2]-1}W^{i,\infty}\Big(0,T_{2};H_{l}^{[m/2]-1-i}(\Omega)\Big),\label{est_main10}\\
	(v_b^0, g_b^0)(t,x,\eta),~(\partial_\eta v_b^0, \partial_\eta g_b^0)(t,x,\eta)&\in\bigcap_{i=0}^{[m/2]-2}W^{i,\infty}\big(0,T_{2};H_{l}^{[m/2]-2-i}(\Omega)\big).\label{est_main12}
\end{align}
\end{prop}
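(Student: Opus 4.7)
The plan is to reduce Proposition \ref{PROP2} to a direct application of the main well-posedness theorem of \cite{LXY} for the MHD boundary layer system, after verifying that the forcing, initial data, and compatibility conditions in \eqref{LB01} meet the hypotheses of that theorem. Concretely, I would introduce the shifted unknowns
\begin{equation*}
\tilde u(t,x,\eta) := u^p(t,x,\eta) - u^0(t,x,0),\qquad \tilde h(t,x,\eta) := h^p(t,x,\eta) - h^0(t,x,0),
\end{equation*}
so that $\tilde u,\tilde h\to 0$ as $\eta\to\infty$, the shifted system falls into the form treated in \cite{LXY} with inhomogeneous source terms (involving $\partial_t u^0|_{y=0}$, $\partial_t h^0|_{y=0}$, $u^0\partial_x u^0|_{y=0}$, $\partial_x p^0|_{y=0}$, $\dots$), and the boundary conditions become $(\tilde u,\tilde v,\partial_\eta\tilde h,g^p)|_{\eta=0}=0$. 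The initial condition is $(\tilde u,\tilde h)|_{t=0}=(0,0)$ by \eqref{ass-e} and the definition.

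With this reformulation, I would verify the four hypotheses needed to invoke \cite{LXY}. First, the source terms inherit the regularity \eqref{ass_outflow} from Proposition \ref{PROP1.1}; crucially, $\partial_x p^0(t,x,0)$ and the boundary traces of $\partial_t^j u^0,\partial_t^j h^0$ lie in $L^\infty\big(0,T_1;H^{m-1-j}(\mathbb{T}_x)\big)$, which is more than enough. Second, the initial non-degeneracy \eqref{positive-ini} is exactly assumption (iii), transferred via the constant-in-$\eta$ initial datum $h_0(x,0)$. Third, the zero initial data for $(\tilde u,\tilde h)$ is trivially in every weighted space $H^{[m/2]-1}_l(\Omega)$. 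Fourth, the compatibility between the boundary and initial data at $(t=0,\eta=0)$ reduces, after a short induction on $\partial_t^i\tilde u$ and $\partial_t^{i-1}\partial_\eta\tilde h$ using \eqref{LB01}, to the trace identities in Remark \ref{rem_e0}: assumption (ii) of Theorem \ref{MAIN THM} together with \eqref{ass-e1} is precisely what is needed to close this induction up to order $[m/2]-3$. Invoking \cite{LXY} then yields the existence time $T_2^\sharp\le T_1$, the uniqueness, and the weighted estimate \eqref{est_main1} after undoing the shift.

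The positivity \eqref{positive-hp} on a further reduced interval $[0,T_2]\subset[0,T_2^\sharp]$ follows from \eqref{positive-ini} by continuity: since $h^p-h_0(x,0)$ belongs to $C\big([0,T_2^\sharp];H^{[m/2]-1}_l(\Omega)\big)\hookrightarrow C\big([0,T_2^\sharp]\times\overline\Omega\big)$ and vanishes at $t=0$, one may choose $T_2$ small so that $|h^p-h_0(x,0)|\le\delta_0/2$ uniformly. For the profile estimates \eqref{est_main10}–\eqref{est_main12}, I would note $(u_b^0,h_b^0)=(u^p-u^0|_{y=0},h^p-h^0|_{y=0})=(\tilde u,\tilde h)$, so \eqref{est_main10} is just \eqref{est_main1}; for $(v_b^0,g_b^0)$, the definition in \eqref{bl_def} costs one tangential derivative and one power of the $\langle\eta\rangle$-weight via the elementary estimate $\|\int_\eta^\infty f\,d\tilde\eta\|_{L^2_l}\lesssim\|f\|_{L^2_{l+1}}$, yielding \eqref{est_main12} with the loss of one derivative and the shift to $[m/2]-2$.

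The main obstacle I expect is \emph{not} the functional-analytic part — which is delegated to \cite{LXY} — but the bookkeeping required to verify the strong compatibility conditions at the corner $\{t=0,\eta=0\}$ after shifting. One must inductively express $\partial_t^i\tilde u(0,x,\eta)$ and $\partial_t^{i-1}\partial_\eta\tilde h(0,x,\eta)$ in terms of the data, then evaluate at $\eta=0$ and match them against $0$; this produces compatibility relations that mix polynomial nonlinearities of $(u_0,h_0)$ with $\partial_x p^0$-traces and their time derivatives, and the role of assumption (ii) of Theorem \ref{MAIN THM} is to guarantee that exactly these combinations vanish. A secondary issue is that the integrals defining $(v_b^0,g_b^0)$ require enough polynomial weight on $(u_b^0,h_b^0)$ to be absolutely convergent and to inherit Sobolev regularity, which is why the hypothesis of the proposition allows arbitrary $l\ge 0$ in \eqref{est_main1}; I will use this freedom to absorb the one weight lost by the $\eta$-integration.
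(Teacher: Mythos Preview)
Your proposal is correct and follows essentially the same route as the paper's proof: both reduce to the main theorem of \cite{LXY} after checking compatibility via \eqref{ass-e}--\eqref{ass-e1}, exploit the $\eta$-independence of the initial data to obtain \eqref{est_main1} for arbitrary $l\ge 0$, identify \eqref{est_main10} with \eqref{est_main1}, and derive \eqref{est_main12} from the integral representation $v_b^0=\int_\eta^\infty\partial_x u_b^0\,d\tilde\eta$ via a weighted estimate that trades extra $\langle\eta\rangle$-weight on $\partial_x u_b^0$ for $L^2_l$-control of $v_b^0$. The paper uses a pointwise Cauchy--Schwarz bound requiring $l_0>1$ extra weights rather than your Hardy-type estimate with a single weight, but since $l$ is arbitrary this is immaterial; your treatment of compatibility and positivity is more explicit than the paper's, which simply cites \cite{LXY}.
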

\begin{proof}[\bf{Proof.}]
	First of all, by using \eqref{ass-e} and \eqref{ass-e1} it can be deduced that, the initial-boundary values of \eqref{LB01} satisfy the compatibility conditions up to order $[m/2]-3$.
Then, the local well-posedness theory of the solution $(u^p, v^p, h^p, g^p)(t,x,\eta)$ to problem (\ref{LB01}), and the relation \eqref{est_main1} has been obtained in \cite{LXY}. Note that the initial data of $(u^p, h^p)$, given in \eqref{LB01}, is independent of normal variable $\eta$, therefore the index $l$ of weight with respect to $\eta$ in \eqref{est_main1} can be arbitrary large. Moreover,  \eqref{est_main10} follows automatically by combining $\eqref{bl_def}_1$ with \eqref{est_main1}. Therefore, we only need to show \eqref{est_main12}.
	
	 From $\eqref{bl_def}_2$ we obtain $(\partial_\eta v_b^0, \partial_\eta g_b^0)=-(\partial_x u_b^0, \partial_x h_b^0),$ and for $\alpha\in\mathbb{N}^2, l\geq0,$
	 \begin{align*}
	 	\big|\langle\eta\rangle^l\partial_{tx}^\alpha v_b^0(t,x,\eta)\big|&=\big|\langle\eta\rangle^l \int_\eta^\infty \partial_{tx}^\alpha\partial_x u_b^0(t,x,\tilde\eta)d\tilde\eta\big| \leq \int_\eta^\infty \big|\langle\tilde\eta\rangle^l  \partial_{tx}^\alpha\partial_x u_b^0(t,x,\tilde\eta)\big|d\tilde\eta\\
	 	&\lesssim\langle\eta\rangle^{\frac{1}{2}-l_0}\big\|\langle\eta\rangle^{l+l_0}  \partial_{tx}^\alpha\partial_x u_b^0(t,x,\eta)\big\|_{L^2_\eta}
	 \end{align*}
	 provided $l_0>\frac{1}{2}$, which implies that
	 \begin{align*}
	 	\big\|\partial_{tx}^\alpha v_b^0(t,\cdot)\big\|_{L^2_l(\Omega)}\lesssim\big\|\langle\eta\rangle^{\frac{1}{2}-l_0}\big\|_{L^2_\eta}\cdot\big\|  \partial_{tx}^\alpha\partial_x u_b^0(t,\cdot)\big\|_{L^2_{l+l_0}(\Omega)}\lesssim\big\|  \partial_{tx}^\alpha\partial_x u_b^0(t,\cdot)\big\|_{L^2_{l+l_0}(\Omega)}
	 \end{align*}
	 provided $l_0>1.$ Similarly, we have that for $l_0>1$,
	  \begin{align*}
	 	\big\|\partial_{tx}^\alpha g_b^0(t,\cdot)\big\|_{L^2_l(\Omega)}\lesssim\big\|  \partial_{tx}^\alpha\partial_x h_b^0(t,\cdot)\big\|_{L^2_{l+l_0}(\Omega)}.
	 \end{align*}
	 By using the above two inequalities and combining with \eqref{est_main10}, we get \eqref{est_main12} immediately.
\end{proof}

From \eqref{bl_def} and the divergence free conditions in \eqref{LB01} we have another expression for $(u_b^0, v_b^0, h_b^0, g_b^0)$:
\begin{align}\label{def_vg}
\begin{cases}
(u^0_b, h^0_b)(t,x,\eta)~=~&(u^p, h^p)(t,x,\eta)-(u^0, h^0)(t,x,0),\\
	(v^0_b, g^0_b)(t,x,\eta)~=~&(v^p, g^p)(t,x,\eta)+\eta(\partial_x u^0, \partial_x h^0)(t,x,0)\\
	&+\int_0^\infty\Big(\partial_x u^p(t,x,\eta)-\partial_x u^0(t,x,0),  \partial_x h^p(t,x,\eta)-\partial_x h^0(t,x,0)\Big)d\eta,
\end{cases}\end{align}
which implies that by virtue of the boundary conditions $(v^p,g^p)|_{\eta=0}=0$ in \eqref{LB01},
\begin{align}\label{bd-vg}
	(v^0_b, g^0_b)(t,x,0)~=~\int_0^\infty\Big(\partial_x u^p(t,x,\eta)-\partial_x u^0(t,x,0),  \partial_x h^p(t,x,\eta)-\partial_x h^0(t,x,0)\Big)d\eta.
\end{align}
Then we can derive the problem of $(u^0_b, v^0_b, h^0_b, g^0_b)(t,x,\eta)$. Indeed, from \eqref{def_vg} and \eqref{bd-vg} it holds
\begin{align*}
\begin{cases}
	(u^p, h^p)(t,x,\eta)~=~(u^0_b, h^0_b)(t,x,\eta)+(u^0,h^0)(t,x,0),\\
	(v^p, g^p)(t,x,\eta)~=~(v^0_b, g^0_b)(t,x,\eta)-(v^0_b, g^0_b)(t,x,0)-\eta(\partial_x u^0, \partial_x h^0)(t,x,0).
\end{cases}\end{align*}
By using the notation of $\bar{f}(t,x)$ to stand for the trace of function $f(t,x,y)$ on the boundary $\{y=0\}$, we substitute the above expression into the problem \eqref{LB01} to obtain that
\begin{align}\label{LB0}
\left\{\begin{array}{ll}
\p_t u_b^0+(\overline{u^0}+u_b^0)\p_x u_b^0+(v_b^0-\overline{v_b^0}-\eta\overline{\p_x u^0})\p_\eta u_b^0-(\overline{h^0}+h_b^0)\p_x h_b^0-(g_b^0-\overline{g^0_b}-\eta\overline{\p_x h^0})\p_\eta h_b^0\\
\qquad+\overline{\partial_x u^0} ~u_b^0-\overline{\partial_x h^0}~ h_b^0=\mu\partial^2_\eta u_b^0,\\
\p_t h_b^0+(\overline{u^0}+u_b^0)\p_x h_b^0+(v_b^0-\overline{v_b^0}-\eta\overline{\p_x u^0})\p_\eta h_b^0-(\overline{h^0}+h_b^0)\p_x u_b^0-(g_b^0-\overline{g^0_b}-\eta\overline{\p_x h^0})\p_\eta u_b^0\\
\qquad+\overline{\partial_x u^0} ~h_b^0-\overline{\partial_x h^0}~ u_b^0=\kappa\partial^2_\eta h_b^0,\\
\p_x u^0_b+\p_\eta v^0_b=0,\qquad \p_x h^0_b+\p_\eta g^0_b=0,\\
(u^0_b,  \partial_\eta h^0_b)|_{\eta=0}=-\big(\overline{u^0}(t,x), 0\big),\quad \lim\limits_{\eta\rightarrow+\infty}(u_b^0, h_b^0)={\bf{0}},\\
(u^0_b,  h^0_b)|_{t=0}=\mathbf{0}, 
\end{array}
\right.
\end{align}
where we have used the equations of $(u^0, h^0)$ on the boundary $\{y=0\}$ from the problem \eqref{LE0}-\eqref{BE}. 
Moreover, from \eqref{LB0} we know that $g_b^0$ satisfies the following equation:
\begin{align}\label{eq_g}
\begin{aligned}
	&\partial_t g_0^b+(\overline{u^0}+u_b^0)\p_x g_b^0+(v_b^0-\overline{v_b^0}-\eta\overline{\p_x u^0})\p_\eta g_b^0-(\overline{h^0}+h_b^0)\p_x v_b^0-(g_b^0-\overline{g^0_b}-\eta\overline{\p_x h^0})\p_\eta v_b^0\\
&-\partial_x\big(\overline{g_b^0}+\eta\overline{\partial_x h^0}\big)u_b^0-\overline{\partial_x h^0}v_b^0+\partial_x\big(\overline{v_b^0}+\eta\overline{\partial_x u^0}\big)h_b^0+\overline{\partial_x u^0}~ g_b^0=\kappa\partial^2_\eta g_b^0.
\end{aligned}\end{align}

After constructing the leading order inner flow $(\mathbf{u^0}, \mathbf{H^0}, p^0)$ and boundary layer profile $(u_b^0, v_b^0, h_b^0, g_b^0)$, we proceed to construct the next order inner MHD flow.

\subsection{First-order inner flow}\label{S3}
Put the ansatz (\ref{2.1}) into (\ref{1.1}) and set the terms of order $\epsilon^{1/2}$ equal to zero, then letting $\eta\rightarrow +\infty$ yields the first order inner flow $(\mathbf{u^1},\mathbf{H^1}, p^1)(t,x,y)$
satisfies the following linearized ideal MHD equations in the region $\{(t,x,y)|t\in[0, T_2],  x\in\mathbb{T}, y\in\mathbb{R}_+\}$:
\begin{align}\label{LE1}
\left\{\begin{array}{ll}
\p_t \mathbf{u^1}+(\mathbf{u^0}\cdot\nabla)\mathbf{u^1}+\nabla p^1-(\mathbf{H^0}\cdot\nabla)\mathbf{H^1}+(\mathbf{u^1}\cdot\nabla)\mathbf{u^0}-(\mathbf{H^1}\cdot\nabla)\mathbf{H^0}=\mathbf{0},\\
\p_t \mathbf{H^1}+(\mathbf{u^0}\cdot\nabla)\mathbf{H^1}-(\mathbf{H^0}\cdot\nabla)\mathbf{u^1}+(\mathbf{u^1}\cdot\nabla)\mathbf{H^0}-(\mathbf{H^1}\cdot\nabla)\mathbf{u^0}=\mathbf{0},\\
\nabla\cdot \mathbf{u^1}=0,\qquad \nabla\cdot \mathbf{H^1}=0,
\end{array}
\right.
\end{align}
where $\mathbf{u^1}=(u^1,v^1)$ and $\mathbf{H^1}=(h^1,g^1)$.
The initial data is chosen to be zero:
\begin{align}
\label{SI}
(\mathbf{u^1}, \mathbf{H^1})|_{t=0}={\bf{0}},
\end{align}
and the boundary conditions of $(v^1_e, g^1_e)$ in (\ref{LE1}) are imposed by
\begin{align}
\label{NB}
(v^1_e, g^1_e)(t,x,0)=-(v^0_b, g^0_b)(t,x,0)=\left(-\int_{0}^{\infty}\p_xu_b^0(t,x,\tilde{\eta})d\tilde{\eta}, -\int_{0}^{\infty}\p_xh_b^0(t,x,\tilde{\eta})d\tilde{\eta}\right),
\end{align}
to homogenous the boundary conditions of oder $\epsilon^{1/2}$ for the normal components of $(\mathbf{u^\epsilon}, \mathbf{H^\epsilon})$. It is noted that the boundary condition \eqref{NB} is sufficient to solve the initial-boundary value problem (\ref{LE1})-(\ref{NB}). 
Moreover, from \eqref{est_main10} we know that
\begin{align}\label{ass-e1}
	(v^1_e, g^1_e)(t,x,0)\in\bigcap_{i=0}^{[m/2]-3}W^{i,\infty}\Big(0,T_{2};H^{[m/2]-2-i}(\mathbb{T}_x)\Big).
\end{align}

By a similar argument as for the Proposition \ref{PROP1.1} for initial-boundary value problem of the linearized ideal MHD equations (\ref{LE1})-(\ref{NB}), or as a direct consequence of the main results in \cite{O-S},   we have
\begin{prop}\label{PROP1.3}
Let $(\mathbf{u^0}, \mathbf{H^0}, p^0)(t,x,y)$ be the leading order inner flow with the initial data $(\mathbf{u_0}, \mathbf{H_0})$ which satisfy the assumptions of Theorem \ref{MAIN THM}.  Let the boundary condition \eqref{ass-e1} hold. 
Then there exists a smooth solution $(\mathbf{u^1}, \mathbf{H^1}, p^1)(t,x,y)$ to problem (\ref{LE1})-(\ref{NB}) in the time interval $[0, T_3]$, such that
\begin{align*}
(\mathbf{u^1}, \mathbf{H^1}, \nabla p^1)(t,x,y)\in&~\bigcap_{j=0}^{[m/2]-2}C^j\Big([0,T_3]; H^{[m/2]-2-j}\big(\mathbb{T}\times\mathbb{R}_+\big)\Big),
\end{align*}
where $0<T_3\leq T_2$ is the local lifespan of solution $(\mathbf{u^1}, \mathbf{H^1}, p^1)(t,x,y)$.
\end{prop}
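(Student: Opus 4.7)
The plan is to view \eqref{LE1} as a linear symmetric hyperbolic system with smooth variable coefficients, obtained by linearizing incompressible ideal MHD around the zeroth order flow $(\mathbf{u^0},\mathbf{H^0})$ of Proposition \ref{PROP1.1}, and then to invoke the linear existence theory referenced in \cite{O-S} after three preparatory reductions: homogenizing the boundary conditions \eqref{NB}, verifying the compatibility at $t=0$, and recovering the pressure from an elliptic Neumann problem.

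\textbf{Step 1 (lifting the boundary data).} Using the regularity \eqref{ass-e1} of the prescribed normal traces, one constructs smooth divergence-free lifts $\tilde{\mathbf{u}}^1$ and $\tilde{\mathbf{H}}^1$ on $[0,T_2]\times\mathbb{T}\times\mathbb{R}_+$ whose second components at $y=0$ equal $v^1_e(t,x,0)$ and $g^1_e(t,x,0)$ respectively; a convenient choice is a stream-function lift with a smooth cut-off in $y$, which inherits one less derivative than the boundary trace. Crucially, since $(u_b^0,h_b^0)|_{t=0}=\mathbf{0}$ by \eqref{LB0}, the traces $(v^1_e,g^1_e)|_{t=0}$ vanish, so the lifts can be arranged to vanish at $t=0$.

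\textbf{Step 2 (homogenized problem and energy estimate).} Writing $\mathbf{u^1}=\mathbf{w}+\tilde{\mathbf{u}}^1$ and $\mathbf{H^1}=\mathbf{W}+\tilde{\mathbf{H}}^1$, the pair $(\mathbf{w},\mathbf{W},p^1)$ satisfies a system of the same form as \eqref{LE1} with smooth forcing, homogeneous normal boundary condition $w_2|_{y=0}=W_2|_{y=0}=0$, and zero initial data. This is exactly the kind of characteristic linear hyperbolic initial-boundary value problem treated in \cite{O-S}. Linearized incompressible ideal MHD around a smooth reference flow retains the symmetric structure of the nonlinear system, and the vanishing normal components constitute the natural characteristic boundary condition. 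Tangential and temporal derivatives close in $L^2$ by standard energy estimates, while normal derivatives are recovered from the evolution equations, the divergence-free constraints, and elliptic regularity for $p^1$. The pressure itself is determined by the Neumann problem $\Delta p^1=-\dv F$ with $\p_y p^1|_{y=0}=F_2|_{y=0}$, obtained by taking the divergence of the momentum equation and evaluating its normal component on the boundary; standard elliptic theory then produces the asserted regularity of $\nabla p^1$.

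\textbf{Step 3 (compatibility) and main obstacle.} Zeroth-order compatibility at $t=0$ is automatic since both the initial data \eqref{SI} and the boundary trace \eqref{NB} vanish there. Higher-order compatibility up to order $[m/2]-3$ follows by repeatedly differentiating \eqref{LE1} in $t$, evaluating at $t=0$, and invoking the strong compatibility condition (ii) of Theorem \ref{MAIN THM} together with the compatibility already verified for Propositions \ref{PROP1.1} and \ref{PROP2}. The main obstacle is not analytic but is bookkeeping: tracking the precise regularity loss through the lift, the energy estimate, and the pressure recovery, which is what accounts for the index drop from $m$ in Proposition \ref{PROP1.1} down to $[m/2]-2$ in the conclusion. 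No new analytic ingredient beyond classical linear symmetric hyperbolic theory and elliptic regularity for the pressure is needed.
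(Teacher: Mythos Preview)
Your proposal is correct and follows essentially the same approach as the paper: the paper simply asserts that the result follows from the linear well-posedness theory of Ohno--Shirota \cite{O-S} (or by an argument analogous to Proposition~\ref{PROP1.1}), while you have supplied the standard preparatory steps (boundary-data lifting, compatibility verification, pressure recovery via the Neumann problem) that make this citation effective. There is no substantive difference in strategy.
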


Now  we consider the following approximation solutions to the problem  (\ref{1.1})-\eqref{BCM}:
\begin{align}\label{app0}
\left\{\begin{array}{ll}
(\mathbf{u^{a0}},\mathbf{H^{a0}})(t,x,y)&=(\mathbf{u^0},\mathbf{H^0})(t,x,y)+\big(u^0_b, \sqrt{\epsilon} v_b^0, h^0_b, \sqrt{\epsilon} g_b^0\big)\big(t,x,\frac{y}{\sqrt{\epsilon}}\big)+\sqrt{\epsilon}(\mathbf{u^1},\mathbf{H^1})(t,x,y),\\
\hspace{.38in}p^{a0}(t,x,y)~&=p^0(t,x,y)+\sqrt{\epsilon}p^1(t,x,y).
\end{array}
\right.
\end{align}
From the above construction of $(\mathbf{u^i}, \mathbf{H^i}, p^i) (i=0,1)$ and $(u_b^0, v_b^0, h_b^0, g_b^0)$, a direct calculation reads
\begin{align}\label{EA0}
\left\{\begin{array}{ll}
\p_t \mathbf{u^{a0}}+(\mathbf{u^{a0}}\cdot\nabla)\mathbf{u^{a0}}+\nabla p^{a0}-(\mathbf{H^{a0}}\cdot\nabla)\mathbf{H^{a0}}-\mu\epsilon\triangle \mathbf{u^{a0}}=\mathbf{R^{a0}_u},\\
\p_t \mathbf{H^{a0}}+(\mathbf{u^{a0}}\cdot\nabla)\mathbf{H^{a0}}-(\mathbf{H^{a0}}\cdot\nabla)\mathbf{u^{a0}}-\kappa\epsilon\triangle \mathbf{H^{a0}}=\mathbf{R^{a0}_H},\\
\nabla\cdot\mathbf{u^{a0}}=0,\quad\nabla\cdot\mathbf{H^{a0}}=0,\\
\big(\mathbf{u^{a0}},\mathbf{H^{a0}}\big)|_{t=0}=(\mathbf{u_0}, \mathbf{H_0})(x,y),\\
\mathbf{u^{a0}}|_{y=0}=\big(\sqrt{\epsilon}u^1(t,x,0),0\big), ~\partial_y h^{a0}|_{y=0}=\big(\partial_y h^0+\sqrt{\epsilon}\partial_y h^1\big)(t,x,0),~g^{a0}|_{y=0}=0,
\end{array}\right.
\end{align}
where the remainder terms $\mathbf{R^{a0}_u}=(R^{a0}_1, R^{a0}_2)$ and $\mathbf{R^{a0}_H}=(R^{a0}_3, R^{a0}_4)$ are summarized as follows,
\begin{align}\label{RL1}\begin{cases}
R^{a0}_1=&\big(u^0-\overline{u^0}+\sqrt{\epsilon}u^1\big)\p_x u_b^0+\big[v^0-y\overline{\p_y v^0}+\sqrt{\epsilon}\big(v^1-\overline{v^1}\big)\big]\p_y u_b^0-\big(h^0-\overline{h^0}+\sqrt{\epsilon}h^1\big)\p_x h_b^0\\
&-\big[g^0-y\overline{\p_y g^0}+\sqrt{\epsilon}\big(g^1-\overline{g^1}\big)\big]\p_y h_b^0+u_b^0\big(\p_x u^0-\overline{\p_x u^0}+\sqrt{\epsilon}\partial_x u^1\big)+\sqrt{\epsilon}v_b^0\partial_y u^0\\
&-h_b^0\big(\p_x h^0-\overline{\p_x h^0}+\sqrt{\epsilon}\partial_x h^1\big)-\sqrt{\epsilon}g_b^0\partial_y h^0+R_1^{high},\\
R^{a0}_2=&\sqrt{\epsilon}\Big[\partial_t v_b^0+(u^0+u_b^0)\partial_x v_b^0+\big(v^0+\sqrt{\epsilon}v^1+\sqrt{\epsilon}v_b^0\big)\partial_y v_b^0-(h^0+h_b^0)\partial_x g_b^0\\
&\qquad-\big(g^0+\sqrt{\epsilon}g^1+\sqrt{\epsilon}g_b^0\big)\partial_y g_b^0+v_0^b\partial_y v^0-g_0^b\partial_y g^0-\mu\epsilon\partial_y^2v_b^0\Big]\\
&+u_b^0\big(\partial_x v^0+\sqrt{\epsilon}\partial_x v^1\big)-h_b^0\big(\partial_x g^0+\sqrt{\epsilon}\partial_x g^1\big)+R_2^{high},\\
R^{a0}_3=&\big(u^0-\overline{u^0}+\sqrt{\epsilon}u^1\big)\p_x h_b^0+\big[v^0-y\overline{\p_y v^0}+\sqrt{\epsilon}\big(v^1-\overline{v^1}\big)\big]\p_y h_b^0-\big(h^0-\overline{h^0}+\sqrt{\epsilon}h^1\big)\p_x u_b^0\\
&-\big[g^0-y\overline{\p_y g^0}+\sqrt{\epsilon}\big(g^1-\overline{g^1}\big)\big]\p_y u_b^0+u_b^0\big(\p_x h^0-\overline{\p_x h^0}+\sqrt{\epsilon}\partial_x h^1\big)+\sqrt{\epsilon}v_b^0\partial_y h^0\\
&-h_b^0\big(\p_x u^0-\overline{\p_x u^0}+\sqrt{\epsilon}\partial_x u^1\big)-\sqrt{\epsilon}g_b^0\partial_y u^0+R_3^{high},\\
R^{a0}_4=&\sqrt{\epsilon}\Big[\partial_t g_b^0+(u^0+u_b^0)\partial_x g_b^0+ \big(v^0+\sqrt{\epsilon}v^1+\sqrt{\epsilon}v_b^0\big)\partial_y g_b^0-(h^0+h_b^0)\partial_x v_b^0\\
&\qquad-\big(g^0+\sqrt{\epsilon}g^1+\sqrt{\epsilon}g_b^0\big)\partial_y v_b^0+v_0^b\partial_y g^0-g_0^b\partial_y v^0-\kappa\epsilon\partial_y^2g_b^0\Big]\\
&+u_b^0\big(\partial_x g^0+\sqrt{\epsilon}\partial_x g^1\big)-h_b^0\big(\partial_x v^0+\sqrt{\epsilon}\partial_x v^1\big)+R_4^{high},
	\end{cases}\end{align}
with
\begin{align}\label{RH3}
\begin{aligned}\begin{cases}	
&R_1^{high}=\epsilon\Big\{u^1\partial_x u^1+(v^1+v_b^0)\partial_y u^1-h^1\partial_x h^1-(g^1+g_b^0)\partial_y h^1-\mu\triangle \big(u^0+\sqrt{\epsilon} u^1\big)-\mu\p_x^2u_b^0\Big\},\\
&R_2^{high}=\epsilon\Big\{u^1\partial_x(v^1+v_b^0)+(v^1+v_b^0)\partial_y v^1-h^1\partial_x(g^1+g_b^0)-(g^1+g_b^0)\partial_y g^1-\mu\triangle\big( v^0+\sqrt{\epsilon} v^1\big)-\mu\sqrt{\epsilon}\partial_x^2 v_b^0\Big\},\\
&R_3^{high}=\epsilon\Big\{u^1\partial_x h^1+(v^1+v_b^0)\partial_y h^1-h^1\partial_x u^1-(g^1+g_b^0)\partial_y u^1-\kappa\triangle\big( h^0+\sqrt{\epsilon}h^1\big)-\kappa\p_x^2h_b^0\Big\},\\
&R_4^{high}=\epsilon\Big\{u^1\partial_x(g^1+g_b^0)+(v^1+v_b^0)\partial_y g^1-h^1\partial_x(v^1+v_b^0)-(g^1+g_b^0)\partial_y v^1-\kappa\triangle\big(g^0+\sqrt{\epsilon} g^1\big)-\kappa\sqrt{\epsilon}\partial_x^2 g_b^0\Big\}.
\end{cases}\end{aligned}\end{align}

The leading order terms of the remainders $R^{a0}_i, 1\leq i\leq4$ mainly exist in the boundary layer, i.e., in the neighborhood of size of $\sqrt{\epsilon}$ near the boundary, and it is easy to check that 
$$R^{high}_i~=~O(\epsilon),\quad i=1\sim4.$$
Thanks to the equation \eqref{eq_g} for $g_b^0$ we find that the remainder $R^{a0}_4$ is actually of order $\epsilon$. Indeed, by virtue of \eqref{eq_g} we can rewrite $R^{a0}_4$ as
\begin{align}\label{RH4}
\begin{aligned}
	R_4^{a0}=&\sqrt{\epsilon}\Big\{\big(u^0-\overline{u^0}\big)\partial_x g_b^0+\big[v^0-y\overline{\partial_y v^0}+\sqrt{\epsilon}\big(v^1-\overline{v^1}\big)\big]\partial_y g_b^0-\big(h^0-\overline{h^0}\big)\partial_x v_b^0\\
&\qquad-\big[g^0-y\overline{\partial_y g^0}+\sqrt{\epsilon}\big(g^1-\overline{g^1}\big)\big]\partial_y v_b^0+v_b^0\big(\partial_y g^0-\overline{\partial_y g^0} \big)-g_b^0\big(\partial_y v^0-\overline{\partial_y v^0} \big)\Big\}\\
&+u_b^0\big[\partial_x g^0-y\overline{\partial_{xy}^2 g^0}+\sqrt{\epsilon}\big(\partial_x g^1-\overline{\partial_x g^1}\big)\big]-h_b^0\big[\partial_x v^0-y\overline{\partial_{xy}^2 v^0}+\sqrt{\epsilon}\big(\partial_x v^1-\overline{\partial_x v^1}\big)\big]+R_4^{high}.	
\end{aligned}\end{align}
Note that the major items of the remainders $R^{a0}_1, R^{a0}_2$ and $R^{a0}_3$ with respect to $\epsilon$ 
 are in fact of order $\sqrt{\epsilon}$. 
 Thus as shown in the ansatz \eqref{2.1}, we proceed in the next subsection to construct the boundary layer profile $\epsilon p_b^1(t,x,\eta)$ for pressure to cover the major items in $\epsilon$ of $R^{a0}_2$.  And in subsection \ref{S4}, we will construct boundary layer profile $\sqrt{\epsilon}(u_b^1, \sqrt{\epsilon}v_b^1, h_b^1, \sqrt{\epsilon}g_b^1)(t,x,\eta)$ 
 to cover the major items in $\epsilon$ of $R^{a0}_1$ and $R^{a0}_3$, and homogenize the boundary conditions of \eqref{EA0} as well. 

\subsection{Leading order boundary layer of pressure}\label{Sp}

In order to eliminate the major items in $\epsilon$ of $R_2^{a0}$ given in \eqref{RL1}, that is, the terms of order $\sqrt{\epsilon}$ produced by $(u_b^0, v_b^0,h_b^0, g_b^0)$, we define the boundary layer profile $\epsilon p_b^1(t,x,\eta)$ for pressure in the following way,
\begin{align*}
	\partial_\eta p_b^1=&-\p_t v_b^0-\big(u_b^0+\overline{u^0}\big)\partial_x v_b^0-\big(v_b^0+\overline{v^1}+\eta\overline{\partial_y v^0} \big)\partial_\eta v_b^0+\big(h_b^0+\overline{h^0}\big)\partial_x g_b^0+\big(g_b^0+\overline{g^1}+\eta\overline{\partial_y g^0} \big)\partial_\eta g_b^0+\mu\partial_\eta^2 v_b^0\\
	&-\big(\overline{\partial_x v^1}+\eta\overline{\partial_{xy}^2v^0}\big)u_b^0-\overline{\partial_y v^0}~v_b^0+\big(\overline{\partial_x g^1}+\eta\overline{\partial_{xy}^2g^0}\big)h_b^0+\overline{\partial_y g^0}~g_b^0,
\end{align*}
or
\begin{align}\label{LP}
\begin{aligned}
p_b^1(t,x,\eta)=&\int_\eta^\infty\Big\{\p_t v_b^0+\big(u_b^0+\overline{u^0}\big)\partial_x v_b^0-\big(h_b^0+\overline{h^0}\big)\partial_x g_b^0+\big(\overline{\partial_x v^1}+\tilde\eta\overline{\partial_{xy}^2v^0}\big)u_b^0-\big(\overline{\partial_x g^1}+\tilde\eta\overline{\partial_{xy}^2g^0}\big)h_b^0\Big\}(t,x,\tilde\eta)d\tilde{\eta}\\
&+\Big[-\big(\frac{v_b^0}{2}+\overline{v^1}+\eta\overline{\partial_y v^0}\big)v_b^0+\big(\frac{g_b^0}{2}+\overline{g^1}+\eta\overline{\partial_y g^0}\big)g_b^0+\mu\partial_\eta v_b^0\Big](t,x,\eta).
\end{aligned}\end{align}
By using the above expression \eqref{LP} and combining with \eqref{est_main10}-\eqref{est_main12}, we can obtain that
\begin{align}\label{est_p}
	p_b^1(t,x,\eta),~\partial_\eta p_b^1(t,x,\eta)&\in\bigcap_{i=0}^{[m/2]-3}W^{i,\infty}\big(0,T_{2};H_{l}^{[m/2]-3-i}(\Omega)\big),\quad \forall l\geq0.
\end{align}

\subsection{First-order boundary layer}\label{S4}

In this subsection, we construct the first-oder boundary layer profiles $(u_b^1, \sqrt{\epsilon}v_b^1, h_b^1, \sqrt{\epsilon}g_b^1)(t,x,\eta)$ of the ansatz \eqref{2.1} to cover the terms of order $O(\sqrt{\epsilon})$ 
in remainders $R^{a0}_1$ and $R^{a0}_3$ given in \eqref{RL1}. To this end, applying \eqref{2.1} into the equations $(\ref{1.1})_1$ and $\eqref{1.1}_3$ and considering the terms of order $\sqrt{\epsilon}$, 
it leads to the following equations for tangential components $u_b^1(t,x,\eta)$ and $h_b^1(t,x,\eta)$ respectively:
\begin{align}\label{LU1}
\begin{aligned}
&\p_t u_b^1+\big(u_b^0+\overline{u^0}\big)\p_x u_b^1
+\big(v_b^0+\overline{v^1}+\eta\overline{\p_y v^0}\big)\p_{\eta}u_b^1-\big(h_b^0+\overline{h^0}\big)\p_x h_b^1-\big(g_b^0+\overline{g^1}+\eta\overline{\p_y g^0}\big)\p_{\eta}h_b^1\\
&+\big(\p_x u_b^0+\overline{\p_x u^0}\big)u_b^1+\p_{\eta}u_b^0~v_b^1-\big(\p_x h_b^0+\overline{\p_x h^0}\big)h_b^1
-\p_{\eta}h_b^0~g_b^1-\mu\p^2_\eta u_b^1\\
=&-\big(\overline{u^1}+\eta\overline{\partial_y u^0} \big)\partial_x u_b^0-\big(\eta\overline{\partial_y v^1}+\frac{\eta^2}{2}\overline{\partial_y^2v^0} \big)\partial_\eta u_b^0+\big(\overline{h^1}+\eta\overline{\partial_y h^0} \big)\partial_x h_b^0+\big(\eta\overline{\partial_y g^1}+\frac{\eta^2}{2}\overline{\partial_y^2g^0} \big)\partial_\eta h_b^0\\
&-\big(\overline{\partial_x u^1}+\eta\overline{\partial_{xy}^2u^0}\big)~u_b^0-\overline{\partial_y u^0}~v_b^0+\big(\overline{\partial_x h^1}+\eta\overline{\partial_{xy}^2h^0}\big)~h_b^0+\overline{\partial_y h^0}~g_b^0,
\end{aligned}\end{align}
and
\begin{align}\label{LM}\begin{aligned}
&\p_t h_b^1+\big(u_b^0+\overline{u^0}\big)\p_x h_b^1+\big(v_b^0+\overline{v^1}+\eta\overline{\p_y v^0}\big)\p_\eta h_b^1-\big(h_b^0+\overline{h^0}\big)\p_x u_b^1-\big(g_b^0+\overline{g^1}+\eta\overline{\p_y g^0}\big)\p_\eta u_b^1\\
&+\big(\partial_x h_b^0+\overline{\p_x h^0}\big)u_b^1+\p_\eta h_b^0~v_b^1-\big(\partial_x u_b^0+ \overline{\partial_x u^0}\big)h_b^1-\p_\eta u_b^0~g_b^1-\kappa\p_\eta^2h_b^1\\
=&-\big(\overline{u^1}+\eta\overline{\partial_y u^0} \big)\partial_x h_b^0-\big(\eta\overline{\partial_y v^1}+\frac{\eta^2}{2}\overline{\partial_y^2v^0} \big)\partial_\eta h_b^0+\big(\overline{h^1}+\eta\overline{\partial_y h^0} \big)\partial_x u_b^0+\big(\eta\overline{\partial_y g^1}+\frac{\eta^2}{2}\overline{\partial_y^2g^0}\big)\partial_\eta u_b^0\\
&-\big(\overline{\partial_x h^1}+\eta\overline{\partial_{xy}^2h^0}\big)u_b^0-\overline{\partial_y h^0}~v_b^0+\big(\overline{\partial_x u^1}+\eta\overline{\partial_{xy}^2u^0}\big)h_b^0+\overline{\partial_y u^0}~g_b^0.
\end{aligned}\end{align}
Comparing the terms of right-hand side of (\ref{LU1}) and \eqref{LM} with the errors   $R_1^{a0}$ and $R_3^{a0}$ given in \eqref{RL1}, it implies that we utilize the equations of $u_1^p$ and $h_1^p$ to cancel the terms of order $\sqrt{\epsilon}$ in $R_1^{a0}$ and $R_3^{a0}$ respectively.
Of course, we still impose the divergence free conditions:
\begin{align}\label{div}
	\partial_x u_b^1+\partial_\eta v_b^1=0,\qquad \partial_x h_b^1+\partial_\eta g_b^1=0,
\end{align}
and the zero initial data:
\begin{align}\label{ILB}
(u_b^1, h_b^1)|_{t=0}={\bf {0}}.
\end{align}
Moreover, we choose the boundary conditions
\begin{align}\label{bd_b1}
(u_b^1, v_b^1, \partial_\eta h_b^1, g_b^1)|_{\eta=0}~=	~\big(-u^1(t,x,0), 0, -\partial_y h^0(t,x,0), 0\big),
\end{align}
to eliminate the major items in $\epsilon$ of  
 the boundary conditions, given in \eqref{EA0}, of approximate solutions \eqref{app0}. Thereby, we obtain the initial-boundary value problem \eqref{LU1}-\eqref{bd_b1} for the first order boundary layer profile $(u_b^1, v_b^1, h_b^1, g_b^1)(t, x,\eta).$

\begin{rem}\label{rem}
	From the above construction, the components $v_b^1$ and $g_b^1$ of $(u_b^1, v_b^1, h_b^1, g_b^1)$ are determined by the divergence free conditions \eqref{div} and boundary conditions \eqref{bd_b1}, in other words, 
	\begin{align*}
		(v_b^1, g_b^1)(t,x,\eta)~=~-\int_0^\eta\big(\partial_x u_b^1, \partial_x h_b^1\big)(t,x,\tilde\eta)d\tilde\eta,
	\end{align*}
	which shows that in general, $(v_b^1, g_b^1)$ doesn't decay to zero as $\eta\rightarrow+\infty$. Note that such profile $(v_b^1, g_b^1)$ is
	slightly different from the corresponding one given in the ansatz \eqref{2.1}, which is expected to decay rapidly as $\eta\rightarrow+\infty$. In fact, the difference between $(v_b^1, g_b^1)$, constructed in this subsection, and the corresponding one in \eqref{2.1} is only a function independent of normal variable $\eta$.
\end{rem}

Now, we establish the well-posedness of the solution $(u_b^1, v_b^1, h_b^1, g_b^1)(t,x,\eta)$ to problem (\ref{LU1})-(\ref{bd_b1}).
To this end, we use the energy methods developed in \cite{LXY}. Specifically speaking, from the divergence free conditions for $(\mathbf{u^i}, \mathbf{H^i}) $ and $(u_b^i, v_b^i, h_b^i, g_b^i) (i=0,1)$,  we rewrite the equation (\ref{LM}) as follows.
\begin{align}\label{PE}\begin{aligned}
&\p_th_b^1+\p_\eta\Big[\big(v_b^0+\overline{v^1}+\eta\overline{\p_y v^0}\big)h_b^1-\big(u_b^0+\overline{u^0}\big)g_b^1-\big(g_b^0+\overline{g^1}+\eta\overline{\p_y g^0}\big)u_b^1+\big(h_b^0+\overline{h^0}\big)v_b^1\Big]-\kappa\p_\eta^2h_b^1\\
=~&\partial_\eta\Big[\big(\eta\overline{\partial_y g^1}+\frac{\eta^2}{2}\overline{\partial_y^2g^0}\big)u_b^0-\big(\overline{h^1}+\eta\overline{\partial_y h^0} \big)v_b^0-\big(\eta\overline{\partial_y v^1}+\frac{\eta^2}{2}\overline{\partial_y^2v^0}\big)h_b^0+\big(\overline{u^1}+\eta\overline{\partial_y u^0} \big)g_b^0 \Big].
\end{aligned}\end{align}
Define
\begin{align*}
\psi(t,x,\eta)=\int_0^\eta h_b^1(t,x,\tilde{\eta})d\tilde{\eta},
\end{align*}
and it implies that by the divergence free condition $\partial_x h_b^1+\partial_\eta g_b^1=0$ and the boundary condition $g_b^1|_{\eta=0}=0$,
\begin{align*}
	\partial_x\psi(t,x,\eta)~=~-g_b^1(t,x,\eta).
\end{align*}
 Integrating the equation (\ref{PE}) over $[0,\eta]$ leads to
\begin{align}\label{PE1}\begin{aligned}
&\p_t\psi+\big(u_b^0+\overline{u^0}\big)\p_x\psi+\big(v_b^0+\overline{v^1}+\eta\overline{\p_y v^0}\big)\p_\eta \psi-\big(g_b^0+\overline{g^1}+\eta\overline{\p_y g^0}\big)u_b^1
+\big(h_b^0+\overline{h^0}\big)v_b^1-\kappa\p_\eta^2\psi\\
=~&\big(\eta\overline{\partial_y g^1}+\frac{\eta^2}{2}\overline{\partial_y^2g^0}\big)u_b^0-\big(\overline{h^1}+\eta\overline{\partial_y h^0} \big)v_b^0-\big(\eta\overline{\partial_y v^1}+\frac{\eta^2}{2}\overline{\partial_y^2v^0}\big)h_b^0+\big(\overline{u^1}+\eta\overline{\partial_y u^0} \big)g_b^0\\
&+\overline{u^1g^1-h^1v^1+\kappa\p_y h^0},
\end{aligned}\end{align}
where we have used the following boundary conditions
\begin{align*}
(u_b^0, v_b^0, g_b^0)|_{\eta=0}=-\big(\overline{u^0}, \overline{v^1}, \overline{g^1}\big)(t,x), \quad (v_b^1,\partial_\eta h_b^1)|_{\eta=0}=\big(0,-\overline{\partial_y h^0} \big)(t, x).
\end{align*}
Then, for simplicity of presentation, we only give the outline about the applications of the energy estimate method developed in \cite{LXY} here. First, we derive the $L^2$-estimates of
$$\p_{tx}^{\alpha}\p^{j}_\eta (u_b^1, h_b^1), \quad \alpha\in\mathbb{N}^2,~j\in\mathbb{N},~ |\alpha|+i\leq k, ~|\alpha|\leq k-1$$
 from the problem (\ref{LU1})-(\ref{bd_b1}) by standard energy methods. Next, it is left to derive 
$L^2$-estimates of $\p_{tx}^{\alpha} (u_b^1, h_b^1)$ with $|\alpha|=k$. By introducing the following new quantities:
\begin{align*}
u^\alpha_\tau=\p_{tx}^{\alpha} u_b^1-\frac{\p_\eta u_b^0}{h_b^0+\overline{h^0}}\p_{tx}^{\alpha} \psi,\qquad
h^\alpha_\tau=\p_{tx}^{\alpha}  h_b^1-\frac{\p_\eta h_b^0}{h_b^0+\overline{h^0}}\p_{tx}^{\alpha} \psi,
\end{align*}
and  from the equations (\ref{LU1}), (\ref{LM}) and (\ref{PE1}), we can derive the equations of $u^\alpha_\tau$ and $h^\alpha_\tau$, in which the terms involving $\partial_{tx}^\alpha (v_b^1,g_b^1)$ disappear. Therefore, it is possible to obtain the 
$L^2$-estimates of $(u^\alpha_\tau, h^\alpha_\tau)$. 
Then we obtain the desired estimates for $\p_{tx}^{\alpha} (u_b^1, h_b^1)$ by proving the equivalence of  
$L^2$-norm between $(u^\alpha_\tau, h^\alpha_\tau)$ and $\p_{tx}^{\alpha} (u_b^1, h_b^1)$, and close the whole energy estimates. Consequently, the well-posedness results of solution $(u_b^1, v_b^1, h_b^1, g_b^1)$ to the initial-boundary value problem (\ref{LU1})-(\ref{bd_b1}) are concluded as follows. 
\begin{prop}\label{PROP1.4}
Let $(u_e^0, v_e^0, h_e^0, g_e^0)$ and $(u_e^1, v_e^1, h_e^1, g_e^1)$ be solutions constructed in Propositions \ref{PROP1.1} and \ref{PROP1.3} respectively. Let $(u_b^0, v_b^0, h_b^0, g_b^0)$ be the boundary layer profile 
constructed in Proposition \ref{PROP2}. 
Then, there exist a positive time $0<T_{4}\leq T_3$ and a unique solution $(u_b^1, v_b^1, h_b^1, g_b^1)$ to the initial-boundary value problem (\ref{LU1})-(\ref{bd_b1}), such that for any $l\geq0,$
\begin{align}\label{est_main11}\begin{aligned}
	(u_b^1, h_b^1)(t,x,\eta)&\in\bigcap_{i=0}^{[m/4]-3}W^{i,\infty}\Big(0,T_{4};H_{l}^{[m/4]-3-i}(\Omega)\Big),\\
(v_b^1, g_b^1)(t,x,\eta)&\in\bigcap_{i=0}^{[m/4]-4}W^{i,\infty}\Big(0,T_{4};L^\infty\big(\mathbb{R}_{\eta,+};H^{[m/4]-4-i}(\mathbb{T}_x)\big)\Big),\\
(\partial_\eta v_b^1, \partial_\eta g_b^1)(t,x,\eta)&\in\bigcap_{i=0}^{[m/4]-4}W^{i,\infty}\big(0,T_{4};H_{l}^{[m/4]-4-i}(\Omega)\big).
\end{aligned}\end{align}
\end{prop}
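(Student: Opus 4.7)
The plan is to treat \eqref{LU1}--\eqref{bd_b1} as a linear inhomogeneous system for $(u_b^1,v_b^1,h_b^1,g_b^1)$ whose coefficients and source terms are built from the already-constructed profiles $(\mathbf{u^0},\mathbf{H^0})$, $(\mathbf{u^1},\mathbf{H^1})$ and $(u_b^0,v_b^0,h_b^0,g_b^0)$. Existence and uniqueness will follow from uniform a~priori estimates in the weighted Sobolev spaces $H_l^k(\Omega)$ via a standard parabolic regularization plus limit passage, so the substantive task is to produce these estimates on a uniform time interval $[0,T_4]\subset[0,T_3]$. Since the problem is linear, the compatibility conditions at $t=0$ inherited from the vanishing initial data \eqref{ILB} and the boundary data in \eqref{bd_b1} (which are traces of the already-smooth profiles) can be checked directly from the assumptions of Theorem \ref{MAIN THM} and Remark \ref{rem_e0}; in particular the compatibility at order $[m/4]-3$ is available because the source coefficients lie in spaces coming from Propositions \ref{PROP1.1}, \ref{PROP2}, \ref{PROP1.3}.

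The first step is to reduce to homogeneous boundary data by subtracting off suitable lifts of the traces $(-u^1(t,x,0),0,-\partial_y h^0(t,x,0),0)$, which is harmless because the lifts are controlled in every $H_l^k$ by Propositions \ref{PROP1.1} and \ref{PROP1.3}. Next, following \cite{LXY}, I introduce the potential $\psi(t,x,\eta)=\int_0^\eta h_b^1(t,x,\tilde\eta)\,d\tilde\eta$, so that $\partial_\eta\psi=h_b^1$ and $\partial_x\psi=-g_b^1$, and derive the integrated equation \eqref{PE1}. The low-order estimates, i.e.~the $L^2_l$-bounds for $\partial_{tx}^\alpha\partial_\eta^j(u_b^1,h_b^1)$ with $|\alpha|+j\le k$ and $|\alpha|\le k-1$, follow by textbook parabolic energy methods: multiply the equations by appropriate weights $\langle\eta\rangle^{2(l+j)}$, integrate by parts, control the transport-type terms using the bounds on $(u_b^0,v_b^0,h_b^0,g_b^0)$ from \eqref{est_main10}--\eqref{est_main12}, and use the positivity of $\mu,\kappa$ to absorb the resulting viscous dissipation.

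The main obstacle, and the crux of the argument, is the top-order estimate $|\alpha|=k$, where the tangential derivatives $\partial_{tx}^\alpha(v_b^1,g_b^1)$ appearing through the terms $\partial_\eta u_b^0\,v_b^1$, $\partial_\eta h_b^0\,g_b^1$ (and their counterparts in the $h_b^1$ equation) would, after integration by parts in $x$ via divergence-free, yield a loss of one tangential derivative that cannot be recovered from the parabolic regularization alone. The resolution is the cancellation mechanism of \cite{LXY}: introduce
\begin{equation*}
u_\tau^\alpha \;=\; \partial_{tx}^\alpha u_b^1 - \frac{\partial_\eta u_b^0}{h_b^0+\overline{h^0}}\partial_{tx}^\alpha\psi,\qquad h_\tau^\alpha \;=\; \partial_{tx}^\alpha h_b^1 - \frac{\partial_\eta h_b^0}{h_b^0+\overline{h^0}}\partial_{tx}^\alpha\psi.
\end{equation*}
A careful computation using \eqref{LU1}, \eqref{LM} and \eqref{PE1} shows that the worst terms, namely those containing $\partial_{tx}^\alpha(v_b^1,g_b^1)$, cancel identically in the equations for $(u_\tau^\alpha,h_\tau^\alpha)$, so these quantities satisfy a closed, parabolic-type system with source controlled by strictly lower-order norms. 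The key quantitative ingredient is the non-degeneracy \eqref{positive-hp}, which ensures $h_b^0+\overline{h^0}=h^p\ge\delta_0/2$ on $[0,T_2]$, so the coefficient $\partial_\eta h_b^0/(h_b^0+\overline{h^0})$ is bounded and the transformation is invertible. Standard energy estimates on $(u_\tau^\alpha,h_\tau^\alpha)$ then yield their $L^2$ bounds.

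Finally, I must recover the estimates for $\partial_{tx}^\alpha(u_b^1,h_b^1)$ themselves. This is done by establishing the equivalence of $L^2$-norms between $(u_\tau^\alpha,h_\tau^\alpha)$ and $\partial_{tx}^\alpha(u_b^1,h_b^1)$: the forward direction is the definition of $u_\tau^\alpha,h_\tau^\alpha$ together with a Hardy-type inequality applied to $\partial_{tx}^\alpha\psi=\int_0^\eta\partial_{tx}^\alpha h_b^1$, and the reverse direction uses the uniform lower bound on $h^p$. Combining the low-order and top-order estimates with Gr\"onwall's inequality in time yields the uniform bound on a sufficiently short interval $[0,T_4]$, and the weight index $l\ge0$ is arbitrary because the problem is linear and the sources in \eqref{LU1}, \eqref{LM} decay rapidly (they are built from $(u_b^0,v_b^0,h_b^0,g_b^0)$ which, by Proposition \ref{PROP2}, lie in $H_l^k$ for every $l$). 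The regularity count $[m/4]-3$ in \eqref{est_main11} arises because each level of the induction on tangential derivatives costs roughly one quarter of the regularity available from $(\mathbf{u_0},\mathbf{H_0})$, due to the coupling through the zeroth-order boundary layer. The bounds for $(v_b^1,g_b^1)$ and $(\partial_\eta v_b^1,\partial_\eta g_b^1)$ follow immediately from \eqref{div} and integration from $\eta=0$, with one fewer derivative than $(u_b^1,h_b^1)$.
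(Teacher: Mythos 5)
Your proposal follows the paper's argument essentially verbatim: the paper also reduces to the potential $\psi=\int_0^\eta h_b^1$, derives the integrated equation \eqref{PE1}, obtains the lower-order $L^2_l$-estimates by standard energy methods, and handles the top tangential order by introducing precisely the transformed quantities $u_\tau^\alpha=\partial_{tx}^\alpha u_b^1-\frac{\partial_\eta u_b^0}{h_b^0+\overline{h^0}}\partial_{tx}^\alpha\psi$, $h_\tau^\alpha=\partial_{tx}^\alpha h_b^1-\frac{\partial_\eta h_b^0}{h_b^0+\overline{h^0}}\partial_{tx}^\alpha\psi$, in which the $\partial_{tx}^\alpha(v_b^1,g_b^1)$ terms cancel, followed by norm equivalence via the lower bound $h^p\ge\delta_0/2$ and Hardy. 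This is exactly the cancellation scheme of \cite{LXY} that the paper invokes, so the two arguments are the same.
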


\subsection{Construction of approximate solutions}\label{S5}

We are in a position to complete the construction of approximate solutions of problem \eqref{1.1}-\eqref{BCM}.
Indeed, based on the profiles given in the above five subsections, we can write down the approximate solutions $(\mathbf{u^{a}},\mathbf{H^{a}}, p^{a})(t,x,y)$ used in this paper.
\begin{align}\label{NAP}
\begin{aligned}
\left\{\begin{array}{ll}
(\mathbf{u^{a}},\mathbf{H^{a}})(t,x,y)=&(\mathbf{u^0},\mathbf{H^0})(t,x,y)+\big(u^0_b, \sqrt{\epsilon} v_b^0, h^0_b, \sqrt{\epsilon} g_b^0\big)\big(t,x,\frac{y}{\sqrt{\epsilon}}\big)\\
&+\sqrt{\epsilon}\Big[(\mathbf{u^1},\mathbf{H^1})(t,x,y)+\chi(y)\big(u^1_b, \sqrt{\epsilon} v_b^1, h^1_b, \sqrt{\epsilon} g_b^1\big)\big(t,x,\frac{y}{\sqrt{\epsilon}}\big)\Big]\\
&+\epsilon\Big(\chi'(y)\int_0^{\frac{y}{\sqrt{\epsilon}}} u_b^1(t,x,\tilde{\eta})d\tilde{\eta},~0,~\chi'(y)\int_0^{\frac{y}{\sqrt{\epsilon}}} h_b^1(t,x,\tilde{\eta})d\tilde{\eta}+\rho(t,x,\frac{y}{\sqrt{\epsilon}}),~-\sqrt{\epsilon}\int_0^{\frac{y}{\sqrt{\epsilon}}}\p_x\rho(t,x,\tilde{\eta})d\tilde{\eta}\Big),\\
\hspace{.36in} p^{a}(t,x,y)=&p^0(t,x,y)+\sqrt{\epsilon}p^1(t,x,y)+\epsilon p^1_b(t,x,\frac{y}{\sqrt{\epsilon}}).
\end{array}\right.
\end{aligned}
\end{align}
Here, $\mathbf{u^a}=(u^{a}, v^{a}), \mathbf{H^a}=(h^{a}, g^{a})$, the smooth cut-off function $\chi(\cdot)$ is defined as follows
\begin{align}
\chi(y)=\left\{
\begin{array}{ll}
1,\quad y\in[0,1]\\
0,\quad y\in[2,+\infty)
\end{array}
\right.
\end{align}
and the boundary corrector $\rho(t,x,\eta)$ is a smooth function with compact support, which is chosen to satisfy the following two conditions:
\begin{align}\label{BBCC}
\rho(0,x,\eta)\equiv 0,\quad \p_\eta\rho(t,x,0)=-\p_y h^1(t,x,0).
\end{align}
It is noted that such function $\rho(t,x,\eta)$ exists since the two conditions in \eqref{BBCC} are compatible from \eqref{S1}, e.g., we can choose $\rho$ as
\begin{align*}
	\rho(t,x,\eta)~=~-\p_y h^1(t,x,0)\cdot\eta\chi(\eta),
\end{align*}
therefore, we can expect  from Proposition \ref{PROP1.3} that,
\begin{align}\label{rho}
	\rho(t,x,\eta)\in\bigcap_{j=0}^{[m/2]-4}C^j\Big([0,T_3]; H_l^{[m/2]-4-j}\big(\Omega\big)\Big), \quad \forall~l\geq0.
\end{align}
\begin{rem}
We introduce the cut-off function $\chi$ such that the normal components $v^a$ and $g^a$ 
 decay rapidly when $y\geq2$ as $\epsilon\rightarrow0$, see Remark \ref{rem}. Along with it are the additional terms involving $\chi'(y)$ in the tangential components $u^a$ and $h^a$, to ensure the divergence free conditions. On the other hand, the boundary corrector $\rho$ is used to cancel the boundary value of $\p_y h^1$ on $\{y=0\}$, so that it still holds that $\partial_y h^a|_{y=0}=0$ for the approximation \eqref{NAP}.
\end{rem}

Firstly, direct calculation reads that $\mathbf{u^a}=(u^{a}, v^{a})$ and $\mathbf{H^a}=(h^{a}, g^{a})$ satisfy the divergence free conditions:
\begin{align*}
\nabla\cdot\mathbf{u^a}=0,\quad \nabla\cdot\mathbf{H^a}=0,	
\end{align*}
 and the following initial-boundary conditions:
 \begin{align*}
 	(\mathbf{u^a}, \mathbf{H^a})|_{t=0}=(\mathbf{u_0},\mathbf{H_0})(x,y),\qquad \mathbf{u^a}|_{y=0}=\mathbf{0}, \quad (\partial_y h^a, g^a)|_{y=0}=\mathbf0.
 \end{align*}

Based on the construction in above Subsections \ref{S1}-\ref{S4}, we find that the approximate solution  $(\mathbf{u^{a}}, \mathbf{H^{a}}, p^{a})$ in (\ref{NAP}) solves the incompressible viscous MHD equations (\ref{1.1}) with some high order error terms with respect to the small parameter $\epsilon$. More precisely,
\begin{align}\label{EA}
\left\{\begin{array}{ll}
\p_t \mathbf{u^{a}}+(\mathbf{u^{a}}\cdot\nabla)\mathbf{u^{a}}+\nabla p^{a}-(\mathbf{H^{a}}\cdot\nabla)\mathbf{H^{a}}-\mu\epsilon\triangle \mathbf{u^{a}}=\mathbf{R_u},\\
\p_t \mathbf{H^{a}}+(\mathbf{u^{a}}\cdot\nabla)\mathbf{H^{a}}-(\mathbf{H^{a}}\cdot\nabla)\mathbf{u^{a}}-\kappa\epsilon\triangle \mathbf{H^{a}}=\mathbf{R_H},\\
\nabla\cdot\mathbf{u^{a}}=0,\quad\nabla\cdot\mathbf{H^{a}}=0,\\
\big(\mathbf{u^{a}},\mathbf{H^{a}}\big)|_{t=0}=(\mathbf{u_0}, \mathbf{H_0})(x,y),\\
\mathbf{u^{a}}|_{y=0}=\mathbf{0}, ~(\partial_y h^{a}, g^{a})|_{y=0}=\mathbf{0}.
\end{array}
\right.
\end{align}
The expressions of the error terms $\mathbf{R_u}=(R_1, R_2)$ and $\mathbf{R_H}=(R_3, R_4)$ caused by the approximation will be given in Appendix.
And for $R_i (i=1\sim4)$ we have the following result, and its proof is also in Appendix A.
\begin{prop}
\label{PROP1.5} Let the approximate solutions $(\mathbf{u^{a}}, \mathbf{H^{a}}, p^{a})$ established in (\ref{NAP}), then the error terms $R_i (i=1\sim4)$ in \eqref{EA} satisfy the following estimates:
\begin{align}\label{ERED}
\big\|\p^\alpha_{tx}R_i(t,\cdot)\big\|_{L^2}~\leq~ C\epsilon,\quad \alpha \in\mathbb{N}^2,~ |\alpha|\leq 3,~t\in[0,T_4],
\end{align}
for some positive constant $C$ independent of $\epsilon.$
\end{prop}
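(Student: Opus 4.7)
The plan is to substitute the explicit approximate solution \eqref{NAP} into the left-hand sides of the incompressible MHD system \eqref{1.1} and to exploit the careful matching performed in Subsections~\ref{S1}--\ref{S4}: the profiles $(\mathbf{u}^0,\mathbf{H}^0,p^0)$, $(u_b^0,v_b^0,h_b^0,g_b^0)$, $(\mathbf{u}^1,\mathbf{H}^1,p^1)$, $p_b^1$, and $(u_b^1,v_b^1,h_b^1,g_b^1)$ were constructed precisely so that every formal term of order $1$ and of order $\sqrt\epsilon$ in the resulting expression cancels, through the ideal MHD system \eqref{LE0}, the Prandtl-type boundary-layer system \eqref{LB0}, the linearized ideal MHD system \eqref{LE1}, the pressure relation \eqref{LP}, and the first-order boundary-layer equations \eqref{LU1}--\eqref{LM} respectively. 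Once the explicit formulas for $\mathbf{R_u}=(R_1,R_2)$ and $\mathbf{R_H}=(R_3,R_4)$ are written out in the appendix, each surviving summand is visibly either an inner-flow quantity carrying a factor of $\epsilon$ (or higher), or a boundary-layer profile carrying at least $\epsilon$, or an inner/boundary-layer interaction whose smallness is recovered by Taylor expansion near $\{y=0\}$. The task then reduces to a term-by-term bound of $\partial_{tx}^\alpha R_i$ in $L^2_{xy}$ for $|\alpha|\leq 3$.

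For the purely inner-flow remainders, such as $\mu\epsilon\triangle \mathbf{u}^0$, $\kappa\epsilon\triangle\mathbf{H}^0$, the quadratic interactions $\epsilon\,\mathbf{u}^1\cdot\nabla\mathbf{u}^1$ and $\epsilon\,\mathbf{H}^1\cdot\nabla\mathbf{H}^1$, and the higher-order pieces collected in \eqref{RH3}, Propositions~\ref{PROP1.1} and \ref{PROP1.3}, together with the assumption $m\geq 36$, supply abundant regularity in $H^3_{tx}L^2_{xy}$, and the $\epsilon$-prefactor delivers the bound at once. For the purely boundary-layer remainders of the form $\epsilon\,F(t,x,y/\sqrt\epsilon)$ with $F$ built from tangential derivatives of a profile, I would use the scaling identity
\[
\bigl\|\epsilon\,F(t,x,y/\sqrt\epsilon)\bigr\|_{L^2_y} \;=\; \epsilon^{1/4}\,\bigl\|\epsilon\,F(t,x,\eta)\bigr\|_{L^2_\eta},
\]
so that the weighted estimates of Propositions~\ref{PROP2} and \ref{PROP1.4} (available for arbitrary polynomial weights $\langle\eta\rangle^l$) yield an $\epsilon^{5/4}$ bound, well within the $\epsilon$ target.

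The essential work lies in the interaction remainders visible in \eqref{RL1}, typified by $(u^0-\overline{u^0})\partial_x u_b^0$ and $[v^0-y\,\overline{\partial_y v^0}+\sqrt\epsilon(v^1-\overline{v^1})]\partial_\eta u_b^0$. For each such term I would Taylor expand the inner-flow factor around $\{y=0\}$: the constant piece cancels by the subtraction, the linear piece in $y$ is exactly what the right-hand sides of the first-order boundary-layer equations \eqref{LU1}--\eqref{LM} and the pressure relation \eqref{LP} were designed to absorb, and the remaining quadratic piece contributes a factor $y^2 = \epsilon\eta^2$; combined with polynomially weighted estimates in $\langle\eta\rangle^l$ and the scaling identity above, this again produces a bound of order $\epsilon^{5/4}$. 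What is left is the bookkeeping for the cut-off $\chi$ and the corrector $\rho$ appearing in \eqref{NAP}: the $\chi'(y)$-terms are supported in $y\in[1,2]$, where the boundary-layer argument $y/\sqrt\epsilon\geq 1/\sqrt\epsilon$ forces rapid decay of the profiles (any power of $\epsilon$) thanks to the arbitrarily large polynomial weight in $\eta$ afforded by Propositions~\ref{PROP2} and \ref{PROP1.4}, while the $\rho$-terms enter with an explicit $\epsilon$-prefactor and inherit the smoothness stated in \eqref{rho}. The main obstacle I expect is the sheer volume of interaction terms to track and the need to guarantee three tangential derivatives of weighted boundary-layer profiles; the derivative budget $[m/4]-4\geq 5$ in Proposition~\ref{PROP1.4} is what dictates the choice $m\geq 36$ and is what makes the term-by-term estimation close.
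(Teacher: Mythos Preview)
Your proposal is correct and follows essentially the same route as the paper's own proof in Appendix~A: the paper likewise decomposes each $R_i$ into interaction terms handled by Taylor expansion of the inner factor near $\{y=0\}$ (producing the factor $y^2=\epsilon\eta^2$ and hence an $\epsilon^{5/4}$ bound after the $L^2_y\to L^2_\eta$ scaling), cut-off terms $R_i^C$ supported in $\{y\geq 1\}$ where the weighted decay of the boundary-layer profiles gives any desired power of $\epsilon$, and high-order terms $R_i^H$ carrying an explicit $\epsilon$ prefactor. Your accounting of the derivative budget via Proposition~\ref{PROP1.4} and the role of $m\geq 36$ also matches the paper's reasoning.
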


\section{Estimates of the remainder and proof of the main theorem}

Recall that in the above section, we have constructed the approximate solution  $(\mathbf{u^{a}}, \mathbf{H^{a}}, p^{a})$ in (\ref{NAP}), which satisfies the problem \eqref{EA}.
Let $(\mathbf{u^\epsilon}, \mathbf{H^\epsilon}, p^\epsilon)(t,x,y)$ be the solution of the original viscous MHD problem \eqref{1.1}-\eqref{BCM}, and set
\begin{align}\label{def_err}
(\mathbf{u^\epsilon}, \mathbf{H^\epsilon}, p^\epsilon)(t,x,y)=(\mathbf{u^{a}}, \mathbf{H^{a}}, p^{a})(t,x,y)
+\epsilon(\mathbf{u}, \mathbf{H}, p)(t,x,y)
\end{align}
with $\mathbf{u}=(u,v)$ and $\mathbf{H}=(h,g)$.
Then applying \eqref{EA} and \eqref{def_err} in the problem \eqref{1.1}-\eqref{BCM}, we derive the initial-boundary value problem for the remainder $(\mathbf{u}, \mathbf{H}, p)(t,x,y)$:
\begin{align}\label{ER}
\left\{\begin{array}{ll}
\p_t \mathbf{u}+(\mathbf{u^{\epsilon}}\cdot\nabla)\mathbf{u}-(\mathbf{H^{\epsilon}}\cdot\nabla)\mathbf{H}+\nabla p+(\mathbf{u}\cdot\nabla)\mathbf{u}^{a}-(\mathbf{H}\cdot\nabla)\mathbf{H}^{a}-\mu\epsilon\triangle \mathbf{u^{a}}=\mathbf{r_u},\\
\p_t \mathbf{H}+(\mathbf{u^{\epsilon}}\cdot\nabla)\mathbf{H}-(\mathbf{H^{\epsilon}}\cdot\nabla)\mathbf{u}+(\mathbf{u}\cdot\nabla)\mathbf{H}^a-(\mathbf{H}\cdot\nabla)\mathbf{u}^a-\kappa\epsilon\triangle \mathbf{H}=\mathbf{r_H},\\
\nabla\cdot\mathbf{u}=0,\quad\nabla\cdot\mathbf{H}=0,\\
\big(\mathbf{u},\mathbf{H}\big)|_{t=0}=\mathbf{0},\quad
\mathbf{u}|_{y=0}=\mathbf{0}, ~(\partial_y h, g)|_{y=0}=\mathbf{0}.
\end{array}\right.
\end{align}
where $r_i^\epsilon=\epsilon^{-1}R_i , i=1\sim4$ with $R_i$ given by \eqref{EA}. Moreover, from Proposition \ref{PROP1.5} we can achieve that
\begin{equation}
	\label{est_err}
	\|\partial_{tx}^\alpha r_i^\epsilon(t,\cdot)\|_{L^2}\leq C,\quad |\alpha|\leq3,~i=1\sim 4,~t\in[0,T_4]
\end{equation}
for some positive constant $C$ independent of $\epsilon$.

The key difficulty in the analysis for problem \eqref{ER} in Sobolev spaces is to control the strong interaction in the
boundary layer of thickness $O(\epsilon^{\frac{1}{2}})$ between
 the vorticity generated from the boundary layer and the remainder terms, even for short time (but independent of $\epsilon$). Precisely, it arises in the first and third equations of \eqref{ER} for the tangential components $(u,h)$, and exhibits the following forms: 
\begin{equation}\label{bad}
\begin{cases}
	v\partial_y u^a-g\partial_y h^a=\epsilon^{-\frac{1}{2}}v\partial_\eta u_b^0-\epsilon^{-\frac{1}{2}}g\partial_\eta h_b^0+O(1),\\
	v\partial_y h^a-g\partial_y u^a=\epsilon^{-\frac{1}{2}}v\partial_\eta h_b^0-\epsilon^{-\frac{1}{2}}g\partial_\eta u_b^0+O(1).
\end{cases}\end{equation}
Then by the energy method,
\begin{equation*}
\begin{cases}
\big|\int_{\mathbb{T}\times\mathbb{R}_+}u\cdot(v\partial_y u^a-g\partial_y h^a)dxdy\big|\leq O(1)	\epsilon^{-\frac{1}{2}}\|(u,v,g)\|_{L^2}^2,\\
\big|\int_{\mathbb{T}\times\mathbb{R}_+}h\cdot(v\partial_yh^a-g\partial_yu^a)dxdy\big|\leq O(1)	\epsilon^{-\frac{1}{2}}\|(h,v,g)\|_{L^2}^2,
\end{cases}
\end{equation*}
which prevents us from attaining the uniform estimates in $\epsilon.$
To overcome this difficulty, we will apply the idea used in 
\cite{LXY}
to take care of the cancellations between some physical terms
according to the  structure of the system.

\subsection{Key transformation}

By the divergence free condition
$\nabla\cdot\mathbf{H}=\partial_x h+\partial_y g=0,$
there exists a stream function $\psi(t,x,y)$, such that
\begin{align}\label{def_psi}
h=\p_y\psi,\quad g=-\p_x\psi,\quad \psi|_{y=0}=0,~\psi|_{t=0}=0,
\end{align}
and $\psi$ satisfies
\begin{align}\label{eq_psi}
\partial_t \psi+(u^\epsilon\partial_x+v^\epsilon\partial_y )\psi-g^a u+h^a v-\kappa\epsilon \triangle\psi=\partial_y^{-1}r_3^\epsilon\triangleq r_5^\epsilon.
\end{align}

Recall the boundary layer profiles given in \eqref{bl_def} and \eqref{LB01}:
\begin{align}\label{uh-p}
 (u^p, h^p)(t,x,\eta)=(u^0, h^0)(t,x,0)+(u_b^0, h_b^0)(t,x,\eta)=(\overline{u^0}, \overline{h^0})(t,x)+(u_b^0, h_b^0)(t,x,\eta),
 \end{align}
  and take the positive condition \eqref{positive-hp} for $h^p$ into account.
Note that there is a term $h^a v$ in the equation \eqref{eq_psi} of $\psi$ that has the form: 
  \begin{align}\label{good}
  	h^a v~=~h^p v+(h^0-\overline{h^0})v+O(\sqrt{\epsilon}).
  \end{align}
  Inspired by the idea of \cite{LXY} and comparing \eqref{bad} with \eqref{good}, we introduce the new quantities to replace the tangential components $(u,h)$:
  \begin{align}\label{tr1}
  \begin{aligned}
   \displaystyle 	\hat{u}(t,x,y):=u(t,x,y)-\frac{\partial_y u^p\big(t,x,\frac{y}{\sqrt{\epsilon}}\big)}{h^p\big(t,x,\frac{y}{\sqrt{\epsilon}}\big)}\psi(t,x,y)=u(t,x,y)-\frac{\epsilon^{-\frac{1}{2}} \partial_\eta u_0^b\big(t,x,\frac{y}{\sqrt{\epsilon}}\big)}{h^p\big(t,x,\frac{y}{\sqrt{\epsilon}}\big)}\psi(t,x,y),\\
    \displaystyle	 \hat{h}(t,x,y):=h(t,x,y)-\frac{\partial_y h^p\big(t,x,\frac{y}{\sqrt{\epsilon}}\big)}{h^p\big(t,x,\frac{y}{\sqrt{\epsilon}}\big)}\psi(t,x,y)=h(t,x,y)-\frac{\epsilon^{-\frac{1}{2}} \partial_\eta h_0^b\big(t,x,\frac{y}{\sqrt{\epsilon}}\big)}{h^p\big(t,x,\frac{y}{\sqrt{\epsilon}}\big)}\psi(t,x,y).
  \end{aligned}\end{align}
  Consequently, in the equations of new unknown $\hat{u}$ and $\hat{h}$, the large terms $\epsilon^{-\frac{1}{2}}v\partial_\eta u_b^0$ and $\epsilon^{-\frac{1}{2}}v\partial_\eta h_b^0$ in \eqref{bad} are directly cancelled, 
  respectively. On the other hand, for the large terms $\epsilon^{-\frac{1}{2}}g\partial_\eta h_b^0$ and $\epsilon^{-\frac{1}{2}}g\partial_\eta u_b^0$ in \eqref{bad}  produced by $g\partial_y h^a$ and $g\partial_y u^a$ respectively, we utilize another important observation in \cite{LXY} to eliminate such terms by the mixed terms. More precisely, it implies that by virtue of \eqref{def_psi} and \eqref{tr1},
  \begin{align*}
  	-h^\epsilon\partial_x h-g\partial_y h^a&=-h^p\partial_x h-g\partial_y h^p+O(1)=-h^p\partial_x h+\partial_y h^p\partial_x\psi+O(1)\\
  	&=-h^p\partial_x\hat{h}-h^p\partial_x\big(\frac{\partial_y h^p}{h^p}\big)\psi+O(1)=-h^p\partial_x\hat{h}+O(1),
  	\end{align*}
  	and
  	\begin{align*}
  	-h^\epsilon\partial_x u-g\partial_y u^a&=-h^p\partial_x u-g\partial_y u^p+O(1)=-h^p\partial_x u+\partial_y u^p\partial_x\psi+O(1)\\
  	&=-h^p\partial_x\hat{u}-h^p\partial_x\big(\frac{\partial_y u^p}{h^p}\big)\psi+O(1)=-h^p\partial_x\hat{u}+O(1),
\end{align*}
 where we have used 
  \begin{align*}\begin{cases}
  \displaystyle h^p\partial_x\big(\frac{\partial_y h^p}{h^p}\big)\psi=y h^p\partial_x\big(\frac{\partial_y h^p}{h^p}\big)\cdot\frac{\psi}{y}\sim\eta h^p\partial_x\big(\frac{\partial_\eta h^p}{h^p}\big)\cdot h=O(1),\\
 \displaystyle h^p\partial_x\big(\frac{\partial_y u^p}{h^p}\big)\psi=y h^p\partial_x\big(\frac{\partial_y u^p}{h^p}\big)\cdot\frac{\psi}{y}\sim\eta h^p\partial_x\big(\frac{\partial_\eta u^p}{h^p}\big)\cdot h=O(1).
  \end{cases}\end{align*}
  Moreover, similar as the results in Lemma 3.4 in \cite{LXY}, we can show the almost equivalence in the
energy space between $(u,h)$ and $(\hat{u},\hat{h})$.
  
  Although we can use the transformation \eqref{tr1} to get over the difficulty induced by \eqref{bad}, it also creates new difficulties. Specifically, the new quantity $\hat{u}$ given in \eqref{tr1} will destroy the divergence free structure of velocity field, i.e., $\partial_x\hat{u}+\partial_y v\neq0$. Once it happens, to obtain the uniform estimates in 
  $\epsilon$ of the pressure $p$ becomes an extreme hard issue. It is worth noting that such difficulty doesn't exist in the boundary layer problem, since the pressure involved is a known function. Therefore, we need to find new transformation such that the new quantities should contain $\hat{u}$ and $\hat{h}$ in their tangential components, and preserve the divergence free condition for velocity field as well. 

Before we introduce the desired transformation, let us first define some notations:
\begin{align}\label{eta}
a^p(t,x,y)~:=~\chi(y)\frac{u^p\big(t,x,\frac{y}{\sqrt{\epsilon}}\big)}{h^p\big(t,x,\frac{y}{\sqrt{\epsilon}}\big)},\qquad 
b^p(t,x,y)~:=~\frac{\partial_y h^p\big(t,x,\frac{y}{\sqrt{\epsilon}}\big)}{h^p\big(t,x,\frac{y}{\sqrt{\epsilon}}\big)}=\epsilon^{-\frac{1}{2}}\frac{\partial_\eta h^p\big(t,x,\frac{y}{\sqrt{\epsilon}}\big)}{h^p\big(t,x,\frac{y}{\sqrt{\epsilon}}\big)},
\end{align}
with the cut-off function $\chi(y)\in C^\infty(\mathbb{R}_+), 0\leq\chi(y)\leq1$ with
\begin{align*}
	\chi(y)~=~\begin{cases}
	1,\quad 0\leq y\leq 1,\\0,\quad y\geq2.
	\end{cases}\end{align*}
From Proposition \ref{PROP2}, we know that for suitable $\alpha\in\mathbb{N}^2, j\in\mathbb{N}$ and any $l\geq0,$
\begin{align}\label{est_p0}
	(1+\eta)^{l+j}\partial_{tx}^\alpha\partial_\eta^j\Big(u^p(t,x,\eta)-\overline{u^0}(t,x),~h^p(t,x,\eta)-\overline{h^0}(t,x)\Big)~=~O(1),
\end{align}
therefore we impose the cut-off function $\chi(y)$  in the definition of $a^p(t,x,y)$ in \eqref{eta} such that for any $k\in\mathbb{R}_+$,
\begin{align}\label{est_eta}
	y^k\partial_y^j\partial_{tx}^\alpha a^p(t,x,y)~=~\begin{cases}O(1),\quad & j\leq k,\\O(\epsilon^{\frac{k-j}{2}}),\quad &j>k\end{cases}~.
	\end{align}
Also, from \eqref{est_p0} it is easy to get that for any $k\in\mathbb{R}_+$,
 \begin{align}\label{est_eta1}
	y^k\partial_y^j\partial_{tx}^\alpha b^p(t,x,y)~=~O(\epsilon^{\frac{k-j-1}{2}}).
	\end{align}
Moreover, the boundary conditions of $u^p$ and $h^p$ in \eqref{LB01} yield
\begin{align}\label{bd_eta}
	a^p(t,x,0)~=~b^p(t,x,0)~=~0.
\end{align}

Now, we introduce the following transformation, which satisfies the requests:
\begin{align}\label{trans}
&\tilde u(t,x,y)~:=~u(t,x,y)-\partial_y(a^p\cdot\psi)(t,x,y),\quad \tilde v(t,x,y)~:=~v(t,x,y)+\partial_x(a^p\cdot\psi)(t,x,y),\nonumber\\
	&\tilde h(t,x,y)~:=~h(t,x,y)-(b^p\cdot\psi)(t,x,y),\qquad \tilde g(t,x,y)~:=~g(t,x,y).
\end{align}
Comparing \eqref{tr1} with \eqref{trans} it implies that $\hat{u}$ is the major item of $\tilde u$, and $\hat{h}=\tilde{h}$. Also it is easy to show that the new velocity $(\tilde{u}, \tilde{v})$ is still divergence free. 
Combining the initial-boundary values of $(\mathbf{u}, \mathbf{H})$ and $\psi$, given in \eqref{ER} and  \eqref{def_psi} respectively, and using \eqref{bd_eta} it yields that the new unknown $(\tilde u,\tilde v, \tilde h, \tilde g)$ preserves the initial-boundary values:
\begin{align*}
(\tilde u,\tilde v, \tilde h, \tilde g)|_{t=0}~=~\mathbf0, \quad	(\tilde u,\tilde v,\partial_y \tilde h, \tilde g)|_{y=0}~=~\mathbf0.
\end{align*}
Moreover, the following lemma shows the original unknown $(\mathbf{u}, \mathbf{H})$  is dominated in $L^p (1<p\leq\infty)$ norm
by the newly defined functions $(\tilde u,\tilde v, \tilde h, \tilde g)$ given by \eqref{trans}.
\begin{lem}\label{lem_equ}
	There exists a positive constant $C$ independent of $\epsilon$, such that 
	\begin{align}\label{equ_norm}
	\|\partial_{tx}^\alpha(\mathbf{u}, \mathbf{H})(t,\cdot)\|_{L^p}+\|y^{-1}\partial_{tx}^\alpha\psi(t,\cdot)\|_{L^p}	
		\leq C\sum_{\beta\leq\alpha}\|\partial_{tx}^\beta U(t,\cdot)\|_{L^p},\quad |\alpha|\leq2,~1<p\leq\infty.
	\end{align}
\end{lem}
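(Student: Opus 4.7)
My plan is to invert the transformation \eqref{trans} step by step, starting from the simplest relation $\tilde h = \partial_y\psi - b^p\psi$. This is a first-order linear ODE in $y$ with boundary value $\psi|_{y=0}=0$, and its integrating factor
\begin{equation*}
\mu(t,x,y)~:=~\exp\Bigl(\int_0^y b^p(t,x,y')\,dy'\Bigr)~=~\frac{h^p(t,x,y/\sqrt{\epsilon})}{h^p(t,x,0)}
\end{equation*}
is, by Proposition \ref{PROP2} and the non-degeneracy estimate $h^p\geq\delta_0/2$, bounded above and below by positive constants uniformly in $\epsilon$. Hence
\begin{equation*}
\psi(t,x,y)~=~\mu(t,x,y)\int_0^y \mu(t,x,y')^{-1}\tilde h(t,x,y')\,dy',
\end{equation*}
which yields the pointwise bound $|\psi(t,x,y)|\leq C\int_0^y|\tilde h(t,x,y')|\,dy'$. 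Applying Hardy's inequality in $y$ (valid for $1<p\leq\infty$) then gives the crucial estimate $\|y^{-1}\psi\|_{L^p}\leq C\|\tilde h\|_{L^p}$.

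With $\psi$ controlled, the remaining identities from \eqref{trans} let me recover the original unknowns: $g=\tilde g$ directly, $h=\tilde h+b^p\psi$, $v=\tilde v-\partial_x a^p\cdot\psi+a^p g$, and $u=\tilde u+\partial_y a^p\cdot\psi+a^p h$. The key trick for each remainder term involving $\psi$ is to write it as $y\cdot(\psi/y)$ multiplied by a $y$-weighted version of the coefficient. The estimates \eqref{est_eta} and \eqref{est_eta1} provide exactly what is needed: $\|a^p\|_{L^\infty}$, $\|y\partial_x a^p\|_{L^\infty}$, $\|y\partial_y a^p\|_{L^\infty}$, and $\|yb^p\|_{L^\infty}$ are all $O(1)$ uniformly in $\epsilon$, closing the case $|\alpha|=0$.

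For the higher-order case $1\leq|\alpha|\leq 2$, I will commute $\partial_{tx}^\alpha$ through the transformation and through the integral representation of $\psi$. Every term produced either puts the $(t,x)$-derivatives on $\tilde u,\tilde v,\tilde h,\tilde g$ (and thus appears in the right-hand side of \eqref{equ_norm}), or on the smooth coefficients $a^p,b^p,\mu$. In the latter case, the weighted bounds \eqref{est_eta}--\eqref{est_eta1}, applied with appropriately chosen $k$ and $j$ (and combined with the trace regularity for $u^p,h^p$ from Proposition \ref{PROP2}, together with Proposition \ref{PROP1.3} for time derivatives entering through $\mu$), again deliver uniform $L^\infty$ control of the multipliers.

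The main obstacle is that $b^p$ and the derivatives of $a^p,b^p$ exhibit $\epsilon^{-1/2}$ (and at higher order $\epsilon^{-k/2}$) singularities concentrated in the layer of thickness $\sqrt{\epsilon}$, which a priori would destroy any uniform-in-$\epsilon$ estimate. What rescues the argument is precisely the trade-off encoded in \eqref{est_eta}--\eqref{est_eta1}: each such singularity is exactly compensated by a factor of $y$ absorbed from the vanishing $\psi|_{y=0}=0$. This also explains why the cut-off $\chi$ in the definition of $a^p$ (to prevent growth outside the layer) and the non-degeneracy $h^p\geq\delta_0/2$ produced by the assumption \eqref{ass_key} are both indispensable at this step.
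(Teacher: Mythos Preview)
Your proposal is correct and follows essentially the same route as the paper. The paper writes the key identity as $\tilde h/h^p=\partial_y(\psi/h^p)$ and then $\psi=h^p\,\partial_y^{-1}(\tilde h/h^p)$, which is exactly your integrating-factor formula (your $\mu$ equals $h^p/h^p|_{y=0}$); it then applies Hardy's inequality and recovers $u,v,h,g$ from the transformation using the same weighted $L^\infty$ bounds on $a^p,b^p$ that you invoke. One small slip: the $(t,x)$-derivatives of $\mu$ are controlled by Proposition~\ref{PROP2} (regularity of $h^p$), not Proposition~\ref{PROP1.3}.
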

The proof is similar to the one of Lemma 3.4 in \cite{LXY}, and for sake of self-containedness, we will provide it in Appendix B. 

\subsection{The transformed problem and preliminaries}

Based on the transformation \eqref{trans},  denote by
\begin{align}\label{U}
	U(t,x,y)~:=~(\tilde u,\tilde v,\tilde h,\tilde g)^T(t,x,y),
\end{align}
then the problem \eqref{ER} can be reduced as follows:
\begin{align}\label{pr_main}
\begin{cases}
	\partial_t U+\textit{A}_1(U)\partial_x U+\textit{A}_2(U)\partial_y U+\textit{C}(U) U+\psi D+(p_x,p_y,0,0)^T-\epsilon \textit{B}\triangle U =E^\epsilon,\\
		\partial_x \tilde u+\partial_y \tilde v=0,\\
		(\tilde u,\tilde v,\partial_y \tilde h, \tilde g)|_{y=0}=\mathbf 0,\quad
		U|_{t=0}=\mathbf 0.
\end{cases}\end{align}
Here, the matrices $A_i(U), i=1,2$ have the forms:
\begin{align}\label{def_A}
	A_i(U)~=~A_i^a+\sqrt{\epsilon}A_i^p+\epsilon \tilde A_i(U),\quad i=1,2,
\end{align}
where
\begin{align*}
		&A_1^a=\begin{pmatrix}
		(u^a+a^ph^a)~I_{2\times2} & [(a^p)^2-1]h^a~I_{2\times2}\\
		-h^a ~I_{2\times2} & (u^a-a^p h^a) ~I_{2\times2}
		\end{pmatrix},\quad A_2^a=\begin{pmatrix}
		(v^a+a^p g^a)~I_{2\times2} & [(a^p)^2-1]g^a ~I_{2\times2}\\
		-g^a ~I_{2\times2} & (v^a-a^p g^a) ~I_{2\times2}
	\end{pmatrix},
\end{align*}
\begin{align*}
	&A_1^p=\begin{pmatrix}
		\mathbf{0}_{2\times2} & \begin{matrix}
-2\mu\sqrt{\epsilon}\partial_xa^p & (\mu-\kappa)\sqrt{\epsilon}(\partial_y a^p+\eta_0b^p) \\
		0 & -(3\mu-\kappa)\sqrt{\epsilon}\partial_xa^p\end{matrix}\\
		\mathbf{0}_{2\times2} & \mathbf{0}_{2\times2}	
		\end{pmatrix},\\
		& A_2^p=\begin{pmatrix}
		\mathbf{0}_{2\times2} & \begin{matrix}-(3\mu-\kappa)\sqrt{\epsilon}\partial_ya^p-(\mu-\kappa)\sqrt{\epsilon} a^pb^p & 0\\
		(\mu-\kappa)\sqrt{\epsilon}\partial_xa^p  & -2\mu\sqrt{\epsilon}\partial_ya^p\end{matrix}\\
		\mathbf{0}_{2\times2} & \mathbf{0}_{2\times2}
\end{pmatrix},
\end{align*}
and
\begin{align*}
		&\tilde A_1(U)=\begin{pmatrix}
		(u+a^ph)~I_{2\times2} & [(a^p)^2-1]h~I_{2\times2}\\
		-h ~I_{2\times2} & (u-a^p h) ~I_{2\times2}
		\end{pmatrix},\quad\tilde  A_2(U)=\begin{pmatrix}
		(v+a^p g)~I_{2\times2} & [(a^p)^2-1]g~I_{2\times2}\\
		-g ~I_{2\times2} & (v-a^p g) ~I_{2\times2}
	\end{pmatrix}.
\end{align*}
The matrix $C(U)$ and vector $D$ have the forms:
\begin{align}\label{def_C}
C(U)~=~C^a+\epsilon \tilde{C}(U), \quad  D~=~D^a+\epsilon \psi D^p,
\end{align}
and the expressions of $C^a, \tilde{C}(U), D^a$ and $D^p$ are complicated that are given in Appendix C.
Also, the matrix $B$ and vector $E^\epsilon=(E^\epsilon_i)_{1\leq i\leq 4}$ are given as follows:
\begin{align}\label{source}
	&B=\left(\begin{array}{ccc}
	\mu ~I_{2\times2} & (\mu-\kappa)a^p ~I_{2\times2}\\
	\mathbf{0}_{2\times2} & \kappa ~I_{2\times2}
	\end{array}\right),\quad E^\epsilon~=~\Big(r_1^\epsilon-\partial_y(a^p r_5^\epsilon),r_2^\epsilon+\partial_x(a^p r_5^\epsilon),r_3^\epsilon-b^p r_5^\epsilon,r_4^\epsilon\Big)^T.
\end{align}
Moreover, we have the following estimates for the above notations, which shows the difficulty in \eqref{bad} is absent in the new problem \eqref{pr_main} for $U$.  The proof will be given in Appendix C. 
\begin{prop}\label{prop_coe-est}
There is a constant $C>0$ independent of $\epsilon$ such that for $|\alpha|\leq2$ and $i=1,2,$
\begin{align}\label{est-coe}
\big\|\partial_{tx}^\alpha (A_i^a, A_i^p, B, C^a)(t,\cdot)\big\|_{L^\infty}+\big\|y\partial_{tx}^\alpha D^a(t,\cdot)\big\|_{L^\infty}+\big\|y^2\partial_{tx}^\alpha  D^p(t,\cdot)\big\|_{L^\infty}+\|\partial_{tx}^\alpha E^\epsilon(t,\cdot)\|_{L^2}~\leq~C,
\end{align}
 and 
\begin{align}\label{est_AU}
	\big\|\partial_{tx}^\alpha\tilde{A}_i(U)(t,\cdot)\big\|_{L^p}+\sqrt{\epsilon} 
	\big\|\partial_{tx}^\alpha\tilde{C}(U)(t,\cdot)\big\|_{L^p}\leq C\sum_{\beta\leq\alpha}\big\|\partial_{tx}^\beta U(t,\cdot)\big\|_{L^p},\quad 1<p\leq+\infty.
	\end{align}	
\end{prop}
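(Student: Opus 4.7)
The plan is to reduce every bound in \eqref{est-coe}--\eqref{est_AU} to three ingredients: (i) the uniform Sobolev control of the building blocks of $(\mathbf{u^a},\mathbf{H^a})$ supplied by Propositions~\ref{PROP1.1}--\ref{PROP1.4}, (ii) the pointwise size estimates \eqref{est_eta}--\eqref{est_eta1} for $a^p$ and $b^p$, and (iii) the norm-equivalence Lemma~\ref{lem_equ}. Since $m\geq 36$, every derivative entering the statement (at most two tangential, plus at most one normal derivative implicit in $D^a$, $D^p$ and $E^\epsilon$) lies comfortably inside the ranges granted by the four construction propositions. Sobolev embedding on $\mathbb{T}\times\mathbb{R}_+$ together with the weighted estimates on $(u_b^i,v_b^i,h_b^i,g_b^i)$ then yields uniform $L^\infty$ bounds on $(\mathbf{u^a},\mathbf{H^a})$ and its tangential derivatives up to order two; the non-degeneracy \eqref{positive-hp}, $h^p\geq\delta_0/2$, is what guarantees the smoothness of $a^p$ and $b^p$, and \eqref{est_eta} at once gives $\|\partial_{tx}^\alpha a^p\|_{L^\infty}=O(1)$ for $|\alpha|\leq 2$.

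With these preliminaries in hand, each of $A_i^a$, $B$ and $C^a$ is a polynomial in $(\mathbf{u^a},\mathbf{H^a},a^p,\mu,\kappa)$ (with at most one tangential derivative in $C^a$), so its $L^\infty$ norm is uniformly bounded. For $A_i^p$, the scalings \eqref{est_eta}--\eqref{est_eta1} give $\partial_y a^p,\,b^p,\,a^p b^p=O(\epsilon^{-1/2})$, while every nonzero entry of $A_i^p$ already carries an explicit $\sqrt{\epsilon}$ prefactor; that prefactor is precisely the reason these terms were introduced and produces the desired $O(1)$ bound. For the vectors $D^a$ and $D^p$, the weights $y$ and $y^2$ in \eqref{est-coe} are exactly what is needed to absorb the negative half-powers of $\epsilon$ produced by successive $y$-derivatives of $a^p$ and $b^p$: \eqref{est_eta}--\eqref{est_eta1} say that $y^k\partial_y^k a^p$ and $y^{k+1}\partial_y^k b^p$ remain $O(1)$. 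For $E^\epsilon$, Proposition~\ref{PROP1.5} directly gives $\|\partial_{tx}^\alpha r_i^\epsilon\|_{L^2}\leq C$ for $i=1,\ldots,4$; the cross terms $\partial_y(a^p r_5^\epsilon)$ and $b^p r_5^\epsilon$ in \eqref{source} are then handled by a one-dimensional Hardy inequality $\|y^{-1}\partial_y^{-1}r_3^\epsilon\|_{L^2}\lesssim\|r_3^\epsilon\|_{L^2}$ combined with the $L^\infty$ bounds $\|y\partial_y a^p\|_{L^\infty},\|y b^p\|_{L^\infty}=O(1)$.

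For the nonlinear estimate \eqref{est_AU}, each entry of $\tilde A_i(U)$ is linear in $(u,v,h,g)$ with coefficients that are polynomials in $a^p$, so H\"older's inequality together with Lemma~\ref{lem_equ} gives $\|\partial_{tx}^\alpha\tilde A_i(U)\|_{L^p}\lesssim\sum_{\beta\leq\alpha}\|\partial_{tx}^\beta U\|_{L^p}$. The matrix $\tilde C(U)$ is again linear in $(u,v,h,g)$ but its coefficients involve derivatives of $a^p$ and $b^p$ carrying the $\epsilon^{-1/2}$ factors identified above, which is exactly why the explicit $\sqrt{\epsilon}$ prefactor appears on the left of \eqref{est_AU}. \textbf{The main obstacle} is the bookkeeping of $\epsilon$-powers: $a^p$, $b^p$ and their $y$-derivatives individually carry negative half-powers of $\epsilon$, and \eqref{est-coe}--\eqref{est_AU} become uniform in $\epsilon$ only once every such factor has been paired either with an explicit $\sqrt{\epsilon}$ (in $A_i^p$ and the prefactor of $\tilde C(U)$) or with a spatial weight ($y$ for $D^a$, $y^2$ for $D^p$). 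Once this matching has been traced term by term from the explicit expressions of $C^a$, $\tilde C(U)$, $D^a$ and $D^p$ given in Appendix~C, what remains is a routine application of product and Sobolev embedding inequalities on $\mathbb{T}\times\mathbb{R}_+$.
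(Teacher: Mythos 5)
The proposal correctly identifies the three overall ingredients (the Sobolev bounds on the constructed profiles, the scalings \eqref{est_eta}--\eqref{est_eta1} for $a^p,b^p$, and Lemma~\ref{lem_equ}) and correctly handles $A_i^a$, $A_i^p$, $B$, $E^\epsilon$ and the nonlinear bounds in \eqref{est_AU}. But there is a genuine gap in your argument for $C^a$ (and, by the same token, for part of $D^a$). You assert that $C^a$ is ``a polynomial in $(\mathbf{u^a},\mathbf{H^a},a^p,\mu,\kappa)$ with at most one tangential derivative,'' so that uniform boundedness is automatic. This misrepresents $C^a$: its entries contain $b^p$, $\partial_y a^p$, $\partial_y^2 a^p$, $\triangle a^p$, $\partial_y b^p$ and $(b^p)^2$, as well as $\partial_y u^a$, $\partial_y v^a$, $\partial_y h^a$, $\partial_y g^a$ --- each of these is individually of size $\epsilon^{-1/2}$ in $L^\infty$, and $C^a$ carries neither an explicit $\sqrt{\epsilon}$ prefactor nor a spatial weight $y$ or $y^2$ that could absorb these factors. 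Your later ``bookkeeping'' paragraph explicitly says the negative powers are always paired with a $\sqrt{\epsilon}$ or a weight, but for $C^a$ this pairing does not exist, so the proposed mechanism fails there.

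What actually makes $\|C^a\|_{L^\infty}=O(1)$ in the paper's proof is a set of structural cancellations that you do not invoke: (i) the normal components $v^a,g^a$ vanish at $y=0$, so by Hardy $\|v^a\partial_y a^p\|_{L^\infty}\lesssim\|y\partial_y a^p\|_{L^\infty}\|\partial_y v^a\|_{L^\infty}=O(1)$ (similarly for $g^a\partial_y a^p$, $v^a b^p$, $g^a b^p$), which is why the combination $\partial_y(v^a-a^pg^a)$ is $O(1)$; (ii) algebraic cancellations from the definition \eqref{eta}: $\partial_y(u^p-a^p h^p)=\partial_y((1-\chi)u^p)=O(1)$ and $\partial_y h^p-b^p h^p=0$, so that $\partial_y(u^a-a^ph^a)=O(1)$ and $\partial_y h^a-b^ph^a=O(1)$ even though each summand is $O(\epsilon^{-1/2})$. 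These are precisely the ``facts (F)'' in the paper's Appendix~C. Without identifying them, the estimate for $C^a$ (and the parts of $D^a$ built from $\partial_y(u^a-a^ph^a)$, $\partial_y(v^a-a^pg^a)$, etc.) does not follow from your mechanism. Once these cancellations are installed, the remainder of your argument goes through essentially as the paper's does.
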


Now, we will make some preliminaries
 for the problem \eqref{pr_main} of $U$. Set
\begin{align}\label{S}
	S~:=~diag\big(1,1,1-(a^p)^2,1-(a^p)^2\big),
\end{align}
then $SA_i^a, S\tilde A_i(U), i=1,2$ are symmetric, and
\begin{align*}
	&S \big(A_1^a+\epsilon\tilde A_1(U)\big)=\begin{pmatrix}
	\big(u^\epsilon+a^p h^\epsilon\big)~I_{2\times2} & [(a^p)^2-1]h^\epsilon~I_{2\times2}\\
	[(a^p)^2-1]h^\epsilon~I_{2\times2}
	& [1-(a^p)^2]\big(u^\epsilon-a^p h^\epsilon\big)~I_{2\times2}\end{pmatrix},\\
	 & S \big(A_2^a+\epsilon\tilde A_2(U)\big)=\begin{pmatrix}
\big(v^\epsilon+a^pg^\epsilon\big)~I_{2\times2} & [(a^p)^2-1]g^\epsilon ~I_{2\times2}\\
	 [(a^p)^2-1]g^\epsilon ~I_{2\times2}
	 &
	 [1-(a^p)^2]\big(v^\epsilon-a^p g^\epsilon\big) ~I_{2\times2}
	 \end{pmatrix}.
\end{align*}
Also,
\begin{align}\label{def-SB}
	SB~=~\left(\begin{array}{ccc}
		\mu ~I_{2\times2} & (\mu-\kappa)a^p ~I_{2\times2}\\
		0 & \kappa[1-(a^p)^2]~ I_{2\times2}
	\end{array}\right).
\end{align}
To ensure that the symmetrizer $S$ and the matrix $SB$ in \eqref{def-SB} are positive definite, we need to impose some restriction on the function $a^p$. Specifically speaking,
by using the local well-posedness results for problem \eqref{LB01} obtained in Proposition \ref{PROP2}, and combining with $a^p|_{t=0}=0$ by the initial data of \eqref{LB01}, we know that for any fixed $\delta>0$ sufficiently small, there
exists a $T_\delta:~0<T_\delta\leq T_4$ such that
\begin{equation}\label{UB0small}
\sup_{t\in[0,T_\delta]}\|a^p(t,\cdot)\|^2_{L^{\infty}}
\leq\frac{4(\mu-\delta)(\kappa-\delta)}{(\mu+\kappa)^2-4\delta\kappa}.
\end{equation}
Then, it is easy to check that under the condition \eqref{UB0small}, $SB$ given by \eqref{def-SB} is positive definite, i.e.,  for any vector $X=(x_1,x_2,x_3,x_4)^T\in \mathbb{R}^4,$
\begin{align}\label{SB}
	SBX\cdot X\geq \delta|X|^2.
\end{align}
Also, we have
\begin{align}\label{positive_S}
	1-(a^p)^2(t,x,y)~\geq~\frac{(\mu-\kappa)^2+4\delta(\mu-\delta)}{(\mu+\kappa)^2-4\delta\kappa}~\triangleq~c_\delta>0,\quad t\in[0,T_\delta],~(x,y)\in\mathbb{T}\times\mathbb{R}_+,
	\end{align}
which, along with \eqref{S} implies the positive definiteness of $S$.
\begin{rem}\label{rem-T}
	From the definition \eqref{eta} for $a^p$, the condition \eqref{UB0small} means that for the leading order boundary layer in the time interval $[0, T_\delta]$, the component of tangential velocity $u^p$ is dominated by the component of tangential magnetic field $h^p$. This represents in some sense the stabilizing effect of the magnetic field on the velocity field in the boundary layers.
	\end{rem}

Now, we are able to establish the crucial estimates of the solution $U$ to the problem \eqref{pr_main}, which will be given in the following two subsections.

\subsection{Energy estimates for $U$ and $U_x$}

This subsection is devoted to obtain the $L^2-$estimates of $U$ and $U_x$ for the problem \eqref{pr_main}.
\begin{prop}\label{prop_U}
For any fixed small $\delta>0$ such that \eqref{UB0small} holds,	there exists a $0<T_*\leq T_\delta$ and a unique solution $U(t,x,y)$ to $(\ref{pr_main})$ on $[0,T_*]$ satisfying the following estimate:
\begin{equation}\label{est_1}
\|U(t,\cdot)\|_{L^2}^2+\epsilon\| U_x(t,\cdot)\|^2_{L^2}+\epsilon\int_0^t\big(\|\nabla U(s,\cdot)\|^2_{L^2}+\epsilon\|\nabla U_x(s,\cdot)\|^2_{L^2}\big)ds\leq C,\quad\forall~ t\in[0,T_*]
\end{equation}
for some constant $C>0$ independent of $\epsilon$.
\end{prop}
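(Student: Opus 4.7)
\emph{Approach.} My plan is an a priori energy estimate for \eqref{pr_main} built around the symmetrizer $S$ from \eqref{S}, which renders $SA_1^a, SA_2^a$ symmetric and makes both $S$ and $SB$ positive definite on $[0,T_\delta]$ by \eqref{positive_S} and \eqref{SB}. Local existence for the parabolic-hyperbolic system \eqref{pr_main} is standard, so the whole burden is to derive an $\epsilon$-uniform bound on $\|U(t)\|_{L^2}^2+\epsilon\|U_x(t)\|_{L^2}^2$ together with its dissipative counterpart on a short interval $T_*\in(0,T_\delta]$; a continuation argument then upgrades this to \eqref{est_1}.

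\emph{$L^2$ estimate for $U$.} I would pair the evolution part of \eqref{pr_main} with $SU$ in $L^2(\mathbb{T}\times\mathbb{R}_+)$. Symmetry of $SA_i^a$ trades $SA_i^a\partial_i U\cdot U$ for $\tfrac12\partial_i(SA_i^a)U\cdot U$ plus a $y=0$ boundary contribution that vanishes: on the boundary $\tilde u=\tilde v=\tilde g=0$, and $a^p(t,x,0)=0$ by \eqref{bd_eta}, which kills the remaining block of $SA_2^a|_{y=0}$ that could pair with $\tilde h|_{y=0}$. The pressure gradient pairs to zero via $\partial_x\tilde u+\partial_y\tilde v=0$ and $\tilde v|_{y=0}=0$. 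Integration by parts in $-\epsilon\int SB\triangle U\cdot U$ produces $\epsilon\int(SB\nabla U):\nabla U\ge\delta\epsilon\|\nabla U\|_{L^2}^2$ by \eqref{SB}, the boundary piece vanishing because $\partial_y\tilde h|_{y=0}=0$ combined with the block structure of $SB$ and $a^p(t,x,0)=0$. The perturbations $\sqrt{\epsilon}A_i^p\partial_i U$, the zero-order $C(U)U$, the forcing $\psi D$, and $E^\epsilon$ are dominated via Proposition \ref{prop_coe-est}, Lemma \ref{lem_equ} (to bound $\|\psi/y\|_{L^p}$ by $\|U\|_{L^p}$-sums), and Young's inequality, every gradient factor being absorbed into $\delta\epsilon\|\nabla U\|^2$.

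\emph{$\epsilon$-weighted estimate for $U_x$.} Differentiating \eqref{pr_main} in $x$ and pairing with $\epsilon SU_x$, symmetry of $SA_i^a$ again cancels the top-order transport; the commutators $[\partial_x,A_i]\partial_i U$ are pointwise controlled by Proposition \ref{prop_coe-est} and split under Young. The pressure drops out because $\partial_x\tilde u_x+\partial_y\tilde v_x=0$ and $\tilde v_x|_{y=0}=0$, and the viscous boundary terms vanish exactly as in Step 1 since $\partial_x$ preserves the homogeneous boundary structure. Summed with the $U$-estimate this yields
\[
\frac{d}{dt}\bigl(\|U\|_{L^2}^2+\epsilon\|U_x\|_{L^2}^2\bigr)+\delta\epsilon\bigl(\|\nabla U\|_{L^2}^2+\epsilon\|\nabla U_x\|_{L^2}^2\bigr)\le C\bigl(1+\|U\|_{L^2}^2+\epsilon\|U_x\|_{L^2}^2\bigr)+N(t),
\]
where $N(t)$ collects the genuinely quasilinear contributions from $\epsilon\tilde A_i(U)\partial_i U$, $\epsilon\tilde C(U)U$, and their $\partial_x$-derivatives.

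\emph{Closure and main obstacle.} The sole real difficulty is bounding $N(t)$ uniformly in $\epsilon$: the dissipation is of strength only $\epsilon$, so isotropic 2D Sobolev embedding $H^1\hookrightarrow L^p$ loses powers of $\epsilon$ and cannot close. My plan is to invoke the anisotropic inequality $\|f\|_{L^\infty}^4\le C\|f\|_{L^2}\|f_x\|_{L^2}\|f_y\|_{L^2}\|f_{xy}\|_{L^2}$ on $\mathbb{T}\times\mathbb{R}_+$ to trade every $\|U\|_{L^\infty}$-factor in $N$ for a product of $\|U\|$, $\|U_x\|$, $\|\nabla U\|$, $\|\nabla U_x\|$; combined with the $\epsilon$-prefactor on $\tilde A_i(U)$ and $\tilde C(U)$ (bounded by $U$ via Proposition \ref{prop_coe-est}) and with $U|_{t=0}=0$, one arrives at $\mathcal{E}'(t)\le C(1+\mathcal{E}(t))+\epsilon^{-\gamma}\mathcal{E}(t)^{1+\alpha}$ for $\mathcal{E}(t)=\|U(t)\|_{L^2}^2+\epsilon\|U_x(t)\|_{L^2}^2$ and explicit $\alpha,\gamma>0$. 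A standard bootstrap starting from $\mathcal{E}(0)=0$ then selects $T_*>0$ independent of $\epsilon$ on which $\mathcal{E}(t)\le C$, and Gronwall delivers \eqref{est_1}. The hard step is exactly this anisotropic bookkeeping; the linear and boundary structure is already tamed by $S$ and by the cancellations designed into the transformation \eqref{trans}.
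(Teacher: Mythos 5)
Your structural setup (symmetrizer $S$, boundary cancellations via $a^p|_{y=0}=0$ and the homogeneous conditions, pressure dropping out of both the $U$ and $U_x$ pairings, and a cascade of $L^2$-estimates for $U$ and $\epsilon^{1/2}U_x$) tracks the paper's two-step argument. But the closure step has a genuine gap, and it stems from a misdiagnosis of why the estimate closes.

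You dismiss the isotropic 2D embedding on the grounds that it ``loses powers of $\epsilon$,'' and you replace it with the anisotropic $L^\infty$ inequality $\|f\|_{L^\infty}^4\lesssim\|f\|_{L^2}\|f_x\|_{L^2}\|f_y\|_{L^2}\|f_{xy}\|_{L^2}$, arriving at $\mathcal{E}'\le C(1+\mathcal{E})+\epsilon^{-\gamma}\mathcal{E}^{1+\alpha}$ with $\gamma>0$ and claiming a bootstrap gives $T_*$ uniform in $\epsilon$. Both halves of this are problematic. First, with $\mathcal{E}(0)=0$, the Riccati-type inequality with an $\epsilon^{-\gamma}$ coefficient, $\gamma>0$, has a maximal existence time that shrinks like $\epsilon^{\gamma/(1+\alpha)}$; no amount of bootstrapping recovers an $\epsilon$-independent $T_*$ from it. Second, the anisotropic inequality demands control of $\|U_y\|_{L^2}$ and $\|U_{xy}\|_{L^2}$ pointwise in time, but neither quantity is part of your energy $\mathcal{E}=\|U\|_{L^2}^2+\epsilon\|U_x\|_{L^2}^2$ — they live only inside the time-integrated dissipation — so the step from ``bound $N(t)$'' to a pointwise ODE for $\mathcal{E}$ does not follow.

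The paper in fact closes with isotropic interpolation, and there is no $\epsilon$-loss precisely because the quasilinear blocks carry an $\epsilon$ prefactor that matches the $\epsilon$ strength of the dissipation. Concretely, after using $\partial_x\bigl(S\tilde A_1(U)\bigr)+\partial_y\bigl(S\tilde A_2(U)\bigr)=O(\epsilon^{-1/2})(u,v,h,g)$, the worst cubic contribution is $\epsilon^{1/2}\|U\|_{L^2}\|U\|_{L^4}^2$; Ladyzhenskaya gives $\|U\|_{L^4}^2\lesssim\|U\|_{L^2}\|\nabla U\|_{L^2}+\|U\|_{L^2}^2$, and Young's inequality converts $\epsilon^{1/2}\|U\|_{L^2}^2\|\nabla U\|_{L^2}$ into $\tfrac{\delta\epsilon}{16}\|\nabla U\|_{L^2}^2+C\|U\|_{L^2}^4$. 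The result is $\tfrac{d}{dt}\|U\|_{L^2}^2+\delta\epsilon\|\nabla U\|_{L^2}^2\le C+ C(1+\|U\|_{L^2}^2)\|U\|_{L^2}^2$, with no negative power of $\epsilon$, which Gronwall closes from $U|_{t=0}=0$ on a $T_*$ independent of $\epsilon$. The $U_x$ estimate is then run separately, feeding in the already-obtained bound $\epsilon\int_0^t\|\nabla U\|_{L^2}^2\,ds\le C$, and yields $\|U_x\|_{L^2}^2\le C\epsilon^{-1}$ — deliberately not uniform; the $\epsilon$-weight in \eqref{est_1} is what makes it uniform. You should replace the anisotropic $L^\infty$ step with this $L^4$/Young absorption, and run the $U$ and $U_x$ Gronwalls sequentially (or, if you keep the combined energy, be explicit that the $\int\|U\|_{H^1}^2$ source in the $U_x$ pairing is controlled by the $U$-dissipation, not by $\mathcal{E}$ alone).
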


\begin{proof}[\textbf{Proof.}]
The local existence and uniqueness of the solution $U$ to problem \eqref{pr_main}, in some time interval $[0, T]$ ($T$ may depends on $\epsilon$), follows from the standard well-posedness result, so we will only show the estimate \eqref{est_1} in the following two steps.

\indent\newline
 (1) \textbf{$L^2-$estimate for $U$.} Multiplying $\eqref{pr_main}_1$ by $S$ from the left and taking the inner product of the resulting equations and $U$, it follows that
\begin{align}\label{L2-0}\begin{aligned}
&\frac{d}{2dt}(SU, U)+\Big(SA_1(U)\partial_x U+SA_2(U)\partial_y U, ~U\Big)+\Big(S\big(C(U)U+\psi D\big)-\frac{1}{2} S_tU, ~U\Big)-\epsilon(SB\triangle U, ~U)\\
&=(SE^\epsilon, ~U).
\end{aligned} \end{align}
 Note that in the above equality we have used the facts $S(p_x, p_y, 0, 0)^T=(p_x, p_y, 0, 0)^T$ and
 \[\Big(S(p_x, p_y, 0, 0)^T, ~U\Big)=0,
 \]
which can be obtained by integration by parts, the divergence free condition $\partial_x \tilde u+\partial_y\tilde v=0$  and the boundary condition $\tilde v|_{y=0}=0.$

 Each term in \eqref{L2-0} can be treated as follows. First, combining \eqref{positive_S} with \eqref{S} yields that
 \begin{align}\label{L2-1}
 	(SU, U)~\geq~c_\delta\|U(t,\cdot)\|_{L^2}^2.
 \end{align}
From the expressions \eqref{def_A}, it reads
\begin{align}\label{L2-2-0}\begin{aligned}
	&\Big(SA_1(U)\partial_x U+SA_2(U)\partial_y U, ~U\Big)\\
	=&\Big(S\big(A_1^a+\epsilon\tilde A_1(U)\big)\partial_x U+S\big(A_2^a+\epsilon\tilde A_2(U)\big)\partial_y U, ~U\Big)+\sqrt{\epsilon}\Big(SA_1^p\partial_x U+SA_2^p\partial_y U, ~U\Big)\\
	\triangleq~ & I_1+I_2.
\end{aligned}\end{align}
Since $SA_i^a, S\tilde A_i(U),~i=1,2$ are symmetric, from the boundary conditions $SA_2^a|_{y=0}=0$ and $S\tilde A_2(U)|_{y=0}=0$, it yields that by integration by parts,
 \begin{align*}
 	I_1=-\frac{1}{2}\Big(\big[\partial_x\big(SA_1^a+\epsilon S\tilde A_1(U)\big)+\partial_y\big(SA_2^a+\epsilon S\tilde A_2(U)\big)\big]U, U\Big).
 	 \end{align*}
By the divergence free conditions $\partial_x u^a+\partial_y v^a=0$ and $\partial_x h^a+\partial_y g^a=0$, one has
\begin{align*}
 	&\partial_x\big(SA_1^a\big)+\partial_y\big(SA_2^a\big)\nonumber\\
 		=&\begin{pmatrix}
	\big(h^a\partial_x+ g^a\partial_y\big)a^p~I_{2\times2} & 2a^p\big(h^a\partial_x+ g^a\partial_y\big)a^p~I_{2\times2}\\
	2a^p\big(h^a\partial_x+ g^a\partial_y\big)a^p~I_{2\times2}
	& \big\{[(a^p)^2-1]\big(h^a\partial_x+ g^a\partial_y\big)a^p-2a^p\big[(u^a-a^p h^a)\partial_x+(v^a-a^p g^a)\partial_y\big]a^p\big\}~I_{2\times2}\end{pmatrix}.
 \end{align*}
Taking the estimates \eqref{est_eta} into account, and by using the facts 
 \begin{align*}
 	\|v^a\partial_y a^p\|_{L^\infty}\leq\|y\partial_ya^p\|_{L^\infty}\big\| \frac{v^a}{y}\big\|_{L^\infty}\lesssim \|y\partial_ya^p\|_{L^\infty}\|\partial_y v^a\|_{L^\infty}=O(1),
 \end{align*}
 and similarly,
 \begin{align*}
 	\|g^a\partial_y a^p\|_{L^\infty}=O(1),
 \end{align*}
it implies 
 \begin{align}\label{est_SAa}
 	\partial_x\big(SA_1^a\big)+\partial_y\big(SA_2^a\big)~=~O(1).
 \end{align}
On the other hand, the divergence free conditions $\partial_x u+\partial_y v=0$ and $\partial_x h+\partial_y g=0$ yields
 \begin{align*}
 &\partial_x\big(S\tilde A_1(U)\big)+\partial_y\big(S\tilde A_2(U)\big)\nonumber\\
 		=&\begin{pmatrix}
	\big(h\partial_x+ g\partial_y\big)a^p~I_{2\times2} & 2a^p\big(h\partial_x+ g\partial_y\big)a^p~I_{2\times2}\\
	2a^p\big(h\partial_x+ g\partial_y\big)a^p~I_{2\times2}
	& \big\{[(a^p)^2-1]\big(h\partial_x+ g\partial_y\big)a^p-2a^p\big[(u-a^p h)\partial_x+(v-a^p g)\partial_y\big]a^p\big\}~I_{2\times2}\end{pmatrix},
 \end{align*}
 and then, it follows that by \eqref{est_eta},
  \begin{align}\label{est_SAt}
 	\partial_x\big(S\tilde A_1(U)\big)+\partial_y\big(S\tilde A_2(U)\big)~=~O(\epsilon^{-\frac{1}{2}})(u,v,h,g).
 \end{align}
Thus, applying \eqref{est_SAa} and \eqref{est_SAt} in $I_1$ we obtain that
 \begin{align}\label{I1}
 	|I_1|\lesssim \|U(t,\cdot)\|^2_{L^2}+\epsilon^{\frac{1}{2}}\|(u,v,h,g)(t,\cdot)\|_{L^2}\|U(t,\cdot)\|_{L^4}^2.
 \end{align}
 From the Sobolev inequality and interpolation inequality, it holds
 \begin{align}\label{4-2}
 	\|U(t,\cdot)\|_{L^4}^2\lesssim\|U(t,\cdot)\|_{L^2}\|U(t,\cdot)\|_{H^1}\lesssim\|U(t,\cdot)\|_{L^2}\|\nabla U(t,\cdot)\|_{L^2}+\|U(t,\cdot)\|_{L^2}^2,
 	\end{align}
  then, applying \eqref{equ_norm} with $p=2$ and \eqref{4-2} to \eqref{I1} yields that
 \begin{align}\label{est_I1}\begin{aligned}
 	|I_1|\lesssim~&\|U(t,\cdot)\|^2_{L^2}+\epsilon^{\frac{1}{2}}\|U(t,\cdot)\|_{L^2}\big(\|U(t,\cdot)\|_{L^2}\|\nabla U(t,\cdot)\|_{L^2}+\|U(t,\cdot)\|_{L^2}^2\big)\\
 	\leq & \frac{\delta\epsilon}{16}\|\nabla U(t,\cdot)\|_{L^2}^2+C\big(1+\epsilon^{\frac{1}{2}}\|U(t,\cdot)\|_{L^2}+\|U(t,\cdot)\|_{L^2}^2\big)\|U(t,\cdot)\|_{L^2}^2.
\end{aligned} \end{align}
For the terms $I_2$, it is easy to obtain that by \eqref{est-coe},
 \begin{align}\label{est_I2}
 |I_2|\leq C\sqrt{\epsilon}\|\nabla U(t,\cdot)\|_{L^2}	\|U(t,\cdot)\|_{L^2}\leq & \frac{\delta\epsilon}{16}\|\nabla U(t,\cdot)\|_{L^2}^2+C\|U(t,\cdot)\|_{L^2}^2.
 \end{align}
Thus, plugging \eqref{est_I1} and \eqref{est_I2} into \eqref{L2-2-0} it follows
\begin{align}\label{L2-2}
	\big(SA_1(U)\partial_x U+SA_2(U)\partial_y U, U\big)	\leq&\frac{\delta\epsilon}{8}\|\nabla U(t,\cdot)\|_{L^2}^2+C\Big(1+\|U(t,\cdot)\|_{L^2}^2\Big)\|U(t,\cdot)\|_{L^2}^2.
\end{align}

Next, from the definitions \eqref{def_C} and \eqref{S}, it gives
\begin{align}\label{L2-3-0}
\Big(S\big(C(U)U+\psi D\big)-\frac{1}{2}S_t U, U\Big)=\Big(S\big(C^a U+\psi D^a\big)-\frac{1}{2}S_t U,  U\Big)+\epsilon\big(\tilde C(U)U+\psi^2 D^p, SU\big).
	\end{align}
Thanks to the estimates \eqref{equ_norm} and  \eqref{est-coe}, it follows that \begin{align}\label{L2-3-1}\begin{aligned}
	\Big|\Big(S\big(C^a U+\psi D^a\big)-\frac{1}{2}S_t U, U\Big)\Big|\leq~&\big\|SC^a-\frac{1}{2}S_t\big\|_{L^\infty}\|U(t,\cdot)\|_{L^2}^2+\|ySD^a\|_{L^\infty}\|y^{-1}\psi\|_{L^2}\|U(t,\cdot)\|_{L^2}\\
	\leq  ~&C\|U(t,\cdot)\|_{L^2}^2.
\end{aligned}\end{align}
For the second term on the right-hand side of \eqref{L2-3-0}, we use \eqref{equ_norm}, \eqref{est-coe} and \eqref{est_AU} with $p=4$, to obtain
\begin{align*}
\epsilon\big|\big(\tilde C(U)U+\psi^2 D^p, SU\big)\big|
	\leq~&\epsilon\|SU(t,\cdot)\|_{L^2}\Big(\|\tilde C(U)\|_{L^4}\|U(t,\cdot)\|_{L^4}+\|y^2D^p(t,\cdot)\|_{L^\infty}\|y^{-1}\psi(t,\cdot)\|_{L^4}^2\Big)\nonumber\\
	\lesssim~&\sqrt{\epsilon}\|U(t,\cdot)\|_{L^2}\|U(t,\cdot)\|_{L^4}^2+\epsilon\|U(t,\cdot)\|_{L^2}\|\tilde h(t,\cdot)\|_{L^4}^2\\
	\lesssim~&\sqrt{\epsilon}\|U(t,\cdot)\|_{L^2}\|U(t,\cdot)\|_{L^4}^2,
	\end{align*}
which implies that by virtue of \eqref{4-2},
\begin{align}\label{L2-3-2}\begin{aligned}
\epsilon\big|\big(\tilde C(U)U+\psi^2 D^p, SU\big)\big|
	\lesssim~&
	\sqrt{\epsilon}\|U(t,\cdot)\|_{L^2}\big(\|U(t,\cdot)\|_{L^2}\|\nabla U(t,\cdot)\|_{L^2}+\|U(t,\cdot)\|_{L^2}^2\big)\\
	\leq&\frac{\delta\epsilon}{8}\|\nabla U(t,\cdot)\|_{L^2}^2+C\big(\sqrt{\epsilon}\|U(t,\cdot)\|_{L^2}+\|U(t,\cdot)\|_{L^2}^2\big)\|U(t,\cdot)\|_{L^2}^2.
\end{aligned}\end{align}
 Substituting \eqref{L2-3-1} and \eqref{L2-3-2} into \eqref{L2-3-0} gives
 \begin{align}\label{L2-3}
 	\Big|\Big(S\big(C(U)U+\psi D\big)-\frac{1}{2}S_t U, U\Big)\Big|\leq&\frac{\delta\epsilon}{8}\|\nabla U(t,\cdot)\|_{L^2}^2+C\Big(1+\|U(t,\cdot)\|_{L^2}^2\Big)\|U(t,\cdot)\|_{L^2}^2.
 \end{align}

To estimate the term $-\epsilon(SB\triangle U,U),$ one has that by integration by parts and the boundary conditions given in \eqref{pr_main},
 \begin{align*}
 	-\epsilon(SB\triangle U,U)=\epsilon(SB\partial_x U, \partial_x U)+\epsilon(SB\partial_y U, \partial_y U)+\epsilon\big(\partial_x(SB)\partial_x U+\partial_y(SB)\partial_y U, U\big),
 \end{align*}
and note that $\partial_y(SB)=O(\epsilon^{-\frac{1}{2}}),$ it implies that by \eqref{est_eta} and \eqref{SB},
 \begin{align}\label{L2-4}
 	-\epsilon(SB\triangle U,U)\geq\delta\epsilon\|\nabla U(t,\cdot)\|_{L^2}^2-C\sqrt{\epsilon}\|\nabla U(t,\cdot)\|_{L^2}\|U(t,\cdot)\|_{L^2}\geq\frac{3\delta\epsilon}{4}\|\nabla U(t,\cdot)\|_{L^2}^2-C\|U(t,\cdot)\|_{L^2}^2.
 \end{align}
 Also, it is easy to obtain that
 \begin{align}\label{L2-5}
 	\big(SE^\epsilon, U\big)\leq\|E^\epsilon(t,\cdot)\|_{L^2}\|SU(t,\cdot)\|_{L^2}\leq \frac{1}{2} \|E^\epsilon(t,\cdot)\|_{L^2}^2+C\|U(t,\cdot)\|_{L^2}^2.
 \end{align}

 Now, plugging \eqref{L2-2}, \eqref{L2-3}, \eqref{L2-4} and \eqref{L2-5} into \eqref{L2-0}, we obtain that
 \begin{align}\label{L2-final}
 	\frac{d}{dt}(SU, U)+\delta\epsilon\|\nabla U(t,\cdot)\|_{L^2}^2\leq \|E^\epsilon(t,\cdot)\|_{L^2}^2+C\big(1+\|U(t,\cdot)\|_{L^2}^2\big)\|U(t,\cdot)\|_{L^2}^2,
 \end{align}
therefore, by using \eqref{est-coe} and \eqref{L2-1}, there exists a $0<T_*\leq T_\delta$ and a constant $C>0$ independent of $\epsilon$, such that for $t\in[0,T_*],$
  \begin{align}\label{L2-fin}
 	\|U(t,\cdot)\|_{L^2}^2+\epsilon\int_0^t\|\nabla U(s,\cdot)\|_{L^2}^2ds~\leq~ C.
 	 \end{align}
\\
  (2) \textbf{$L^2$-estimate for $U_x$.}
 From the problem \eqref{pr_main}, we know that $U_x$ satisfies the following initial-boundary value problem:
\begin{align}\label{pr_main-x}
\begin{cases}
	\partial_t U_x
	+\textit{A}_1(U)\partial_x U_x+\textit{A}_2(U)\partial_y U_x+\partial_x\textit{A}_1(U)\partial_x U+\partial_x\textit{A}_2(U)\partial_y U+\partial_x\big(\textit{C}(U) U+\psi D\big)\\
	\qquad+(p_{xx},p_{yx},0,0)^T-\epsilon \textit{B}\triangle U_x-\epsilon\partial_x B\triangle U =\partial_xE^\epsilon,\\
		\partial_x\tilde u_x+\partial_y\tilde v_x=0,\\
		(\tilde u_x,\tilde v_x,\partial_y\tilde h_x,\tilde g_x)|_{y=0}=\mathbf 0,\quad		U_x|_{t=0}=\mathbf 0.
\end{cases}\end{align}
   Multiplying $\eqref{pr_main-x}_1$ by $S$ from the left and taking the inner product of the resulting equations and $U_x$, it follows that
\begin{align}\label{x-0}
\begin{aligned}
&\frac{d}{2dt}(SU_x, U_x)+\Big(S\big[A_1(U)\partial_x U_x+A_2(U)\partial_y U_x\big]-\epsilon SB\triangle U_x, U_x\Big)+\big(S(p_{xx}, p_{yx},0,0)^T, U_x\big)\\
&+\big(S\partial_x(C(U)U+\psi D)-\frac{1}{2}S_t U_x, U_x\big)+\big(S\big[\partial_x A_1(U) U_x+\partial_x A_2(U) U_y\big], U_x\big)-\epsilon(S\partial_x B\triangle U, U_x)\\
&=(S\partial_x E^\epsilon, U_x).
\end{aligned} \end{align}

To deal with the above terms in \eqref{x-0}, firstly, by similar arguments as given in the above step for $L^2$-norm of $U$, we can obtain that
\begin{align}
	&(SU_x, U_x)~\geq~c_\delta\|U_x(t,\cdot)\|_{L^2}^2,\label{x-1}\\
	&\big|\big(SA_1(U)\partial_x U_x+SA_2(U)\partial_y U_x, U_x\big)\big|\leq\frac{\delta\epsilon}{8}\|\nabla U_x(t,\cdot)\|_{L^2}^2+C\big(1+\|U(t,\cdot)\|_{L^2}^2\big)\|U_x(t,\cdot)\|_{L^2}^2,\label{x-2-0}\\
	&-\epsilon(SB\triangle U_x,U_x)\geq\frac{3\delta\epsilon}{4}\|\nabla U_x(t,\cdot)\|_{L^2}^2-C\|U_x(t,\cdot)\|_{L^2}^2,\nonumber\\
	&\big(S(p_{xx}, p_{yx}, 0, 0)^T, U_x\big)=0,\quad (S\partial_x E^\epsilon, U_x)\leq \frac{1}{2}\|\partial_x E^\epsilon(t,\cdot)\|_{L^2}^2+C\|U_x(t,\cdot)\|_{L^2}^2.\label{x-2}
\end{align}

Next, we will estimate the other terms in \eqref{x-0}.
For the term $\big(S\partial_x(C(U)U+\psi D)-\frac{1}{2}S_t U_x, ~U_x\big)$, it follows that by \eqref{def_C},
\begin{align*}
	\Big(S\partial_x(C(U)U+\psi D)-\frac{1}{2}S_t U_x, ~U_x\Big)=\Big(S\partial_x(C^aU+\psi D^a)-\frac{1}{2}S_t U_x, ~U_x\Big)+\epsilon\Big(\partial_x(\tilde C(U)U+\psi^2 D^p), ~SU_x\Big),
\end{align*}
which implies that
\begin{align}\label{Ca}\begin{aligned}
	&\Big|\Big(S\partial_x(C(U)U+\psi D)-\frac{1}{2}\partial_t SU_x, ~U_x\Big)\Big|\\
	\leq~&C\|U_x(t,\cdot)\|_{L^2}^2+\big\|S\partial_x(C^aU+\psi D^a)-\frac{1}{2}\partial_t SU_x\big\|_{L^2}^2+\epsilon^2\big\|\partial_x(\tilde C(U)U+\psi^2 D^p)\big\|_{L^2}^2.
\end{aligned}\end{align}
It is easy to obtain that by virtue of \eqref{est-coe},
\begin{align}\label{Ca-x}\begin{aligned}
&\big\|S\partial_x(C^aU+\psi D^a)-\frac{1}{2}S_t U_x\big\|_{L^2}^2\\
\leq~&\big\|SC^a-\frac{1}{2}S_t\big\|_{L^\infty}^2\|U_x(t,\cdot)\|_{L^2}^2+\big\|S\partial_x C^a\big\|_{L^\infty}^2\|U(t,\cdot)\|_{L^2}^2+\|ySD^a\|_{L^\infty}\|y^{-1}\psi_x\|_{L^2}^2+\|yS\partial_x D^a\|_{L^\infty}^2\|y^{-1}\psi\|_{L^2}^2\\
\lesssim~&\|U_x(t,\cdot)\|_{L^2}^2+\|U(t,\cdot)\|_{L^2}^2+\big(\|\tilde h_x(t,\cdot)\|_{L^2}+\|\tilde h(t,\cdot)\|_{L^2}\big)^2+\|\tilde h(t,\cdot)\|_{L^2}^2\\
\lesssim~&\|U_x(t,\cdot)\|_{L^2}^2+\|U(t,\cdot)\|_{L^2}^2.
\end{aligned}	\end{align}
On the other hand, by using the estimate \eqref{est_AU} for $\tilde C(U)$ with $p=4$ it follows that
\begin{align*}
	\big\|\partial_x(\tilde C(U)U+\psi^2 D^p)\big\|_{L^2}=~&\big\|\partial_x\tilde C(U)U+\tilde C(U)U_x+2\psi\partial_x\psi D^p+\psi^2 \partial_x D^p\big\|_{L^2}\\
	\leq~&\big\|\partial_x\tilde C(U)\big\|_{L^4}\|U(t,\cdot)\|_{L^4}+\big\|\tilde C(U)\big\|_{L^4}\|U_x(t,\cdot)\|_{L^4}\\
	&+2\|y^2D^p\|_{L^\infty}\|y^{-1}\psi(t,\cdot)\|_{L^4}\|y^{-1}\partial_x\psi(t,\cdot)\|_{L^4}+\|y^2\partial_x D^p\|_{L^\infty}\|y^{-1}\psi(t,\cdot)\|_{L^4}^2\nonumber\\
	\lesssim~&\epsilon^{-\frac{1}{2}}\|U(t,\cdot)\|_{L^4}\big(\|U_x(t,\cdot)\|_{L^4}+\|U(t,\cdot)\|_{L^4}\big)+\|\tilde h(t,\cdot)\|_{L^4}\big(\|\tilde h_x(t,\cdot)\|_{L^4}+\|\tilde h(t,\cdot)\|_{L^4}\big)\\
	\lesssim~&\epsilon^{-\frac{1}{2}}\|U(t,\cdot)\|_{L^4}\big(\|U_x(t,\cdot)\|_{L^4}+\|U(t,\cdot)\|_{L^4}\big),
\end{align*}
and then, along with \eqref{4-2} we get
\begin{align}\label{tC-x}\begin{aligned}
	&\epsilon^2\big\|\partial_x(\tilde C(U)U+\psi^2 D^p)\big\|_{L^2}^2\\
	\lesssim~&\epsilon\|U(t,\cdot)\|_{L^2}\|U(t,\cdot)\|_{H^1}\Big(\|U_x(t,\cdot)\|_{L^2}\|U_x(t,\cdot)\|_{H^1}+\|U(t,\cdot)\|_{L^2}\|U(t,\cdot)\|_{H^1}\Big)\\
	\leq~&\frac{\delta\epsilon}{24}\|\nabla U_x(t,\cdot)\|_{L^2}^2+C\epsilon\Big(\|U(t,\cdot)\|_{L^2}\|U(t,\cdot)\|_{H^1}+\|U(t,\cdot)\|_{L^2}^2\|U(t,\cdot)\|_{H^1}^2\Big)\|U_x(t,\cdot)\|_{L^2}^2\\
	&+C\epsilon\|U(t,\cdot)\|_{L^2}^2\|U(t,\cdot)\|_{H^1}^2.
\end{aligned}\end{align}
Substituting \eqref{Ca-x} and \eqref{tC-x} into \eqref{Ca} yields that
\begin{align}\label{x-C}
\begin{aligned}
	&\big|\big(S\partial_x(C(U)U+\psi D)-\frac{1}{2}S_t U_x, U_x\big)\big|\\
	\leq~&\frac{\delta\epsilon}{24}\|\nabla U_x(t,\cdot)\|_{L^2}^2+C\Big(1+\epsilon\|U(t,\cdot)\|_{L^2}^2\|U(t,\cdot)\|_{H^1}^2\Big)\|U_x(t,\cdot)\|_{L^2}^2+C\big(1+\epsilon\|U(t,\cdot)\|_{H^1}^2\big)\|U(t,\cdot)\|_{L^2}^2.
\end{aligned}\end{align}

For the term
$\big(S\big[\partial_x A_1(U) U_x+\partial_x A_2(U) U_y\big], U_x\big)=\big(\big[\partial_x A_1(U) U_x+\partial_x A_2(U) U_y\big], SU_x\big),$
 we first get that from the definitions \eqref{def_A} of $A_1(U)$,
 \begin{align}\label{def_J}\begin{aligned}
	\partial_x A_1(U) U_x+\partial_x A_2(U) U_y=
	&\partial_x \big(A_1^a+\sqrt{\epsilon}A_1^p\big) U_x+\partial_x \big(A_2^a+\sqrt{\epsilon}A_2^p\big)  U_y\\
	&+\epsilon\big(\partial_x \tilde A_1(U) U_x+\partial_x\tilde A_2(U) U_y\big)\\
	\triangleq &J_1+J_2.
\end{aligned}\end{align}
Then, it follows from the estimate \eqref{est-coe} that, \begin{align}\label{est_J1}
	|\big(J_1, SU_x\big)|~\leq~&\|J_1\|_{L^2}\|SU_x(t,\cdot)\|_{L^2}~\lesssim~\|\nabla U(t,\cdot)\|_{L^2}\|U_x(t,\cdot)\|_{L^2}.
	\end{align}
On the other hand, we obtain that by virtue of \eqref{est_AU} with $p=4$,
\begin{align*}
	|\big(J_2, SU_x\big)|~\lesssim~&\epsilon\|SU_x(t,\cdot)\|_{L^4}\cdot\big(\big\|\partial_x\tilde A_1(U)\big\|_{L^4}+\big\|\partial_x\tilde A_2(U)\big\|_{L^4}\big)\|\nabla U(t,\cdot)\|_{L^2}\\
		\lesssim~&\epsilon\|U_x(t,\cdot)\|_{L^4}\cdot\big(\|U_x(t,\cdot)\|_{L^4}+\|U(t,\cdot)\|_{L^4}\big)\|\nabla U(t,\cdot)\|_{L^2}\\
	\lesssim~&\epsilon\big(\|U_x(t,\cdot)\|_{L^4}^2+\|U(t,\cdot)\|_{L^4}^2\big)\cdot\|\nabla U(t,\cdot)\|_{L^2},\end{align*}
which, along with \eqref{4-2}, implies that
\begin{align}\label{est_J2}\begin{aligned}
	|\big(J_2, SU_x\big)|\lesssim~&\epsilon\big(\|U_x(t,\cdot)\|_{L^2}\| U_x(t,\cdot)\|_{H^1}+\|U(t,\cdot)\|_{L^2}\| U(t,\cdot)\|_{H^1}\big)\|\nabla U(t,\cdot)\|_{L^2}\\
	\leq~&\frac{\delta\epsilon}{24}\|\nabla U_x(t,\cdot)\|_{L^2}^2+C\epsilon \big(\|\nabla U(t,\cdot)\|_{L^2}+\|\nabla U(t,\cdot)\|_{L^2}^2\big)\|U_x(t,\cdot)\|_{L^2}^2\\
	&+C\epsilon\|U(t,\cdot)\|_{L^2}\|U(t,\cdot)\|_{H^1}^2.
\end{aligned}\end{align}
Collecting \eqref{def_J}, \eqref{est_J1} and \eqref{est_J2}, we get that
 \begin{align}\label{x-A}\begin{aligned}
 	&\Big|\Big(S\big[\partial_x A_1(U) U_x+\partial_x A_2(U) U_y\big], ~U_x\big)\Big|\\
	\leq~&\frac{\delta\epsilon}{24}\|\nabla U_x(t,\cdot)\|_{L^2}^2+C\big(1+\epsilon\|\nabla  U(t,\cdot)\|_{L^2}^2\big)\|U_x(t,\cdot)\|_{L^2}^2+C\big(1+\epsilon\|U(t,\cdot)\|_{L^2}\big)\|U(t,\cdot)\|_{H^1}^2.
\end{aligned} \end{align}

It remains to control the term $-\epsilon(S\partial_x B\triangle U, U_x)$. By integration by parts and the boundary conditions, we get
\begin{align*}
	-\epsilon(S\partial_x B\triangle U, U_x)=&\epsilon(S\partial_x BU_x, U_{xx})+\epsilon(\partial_x(S\partial_x B)U_x, U_x)+\epsilon(S\partial_x B U_y, U_{xy})+\epsilon(\partial_y(S\partial_x B) U_y, U_x),
\end{align*}
and note that $\partial_y(S\partial_x B)=O(\epsilon^{-\frac{1}{2}})$, it implies that by virtue of \eqref{est-coe},
\begin{align}\label{x-B}\begin{aligned}
	\big|\epsilon(S\partial_x B\triangle U, ~U_x)\big|~\lesssim~&\epsilon\|\nabla U_x(t,\cdot)\|_{L^2}\|\nabla U(t,\cdot)\|_{L^2}+\sqrt{\epsilon}\|\nabla U(t,\cdot)\|_{L^2}\|U_x(t,\cdot)\|_{L^2}\\
	 \leq~&\frac{\delta\epsilon}{24}\|\nabla U_x(t,\cdot)\|_{L^2}^2+C\epsilon\|\nabla U(t,\cdot)\|_{L^2}^2+C\|U_x(t,\cdot)\|_{L^2}^2.
\end{aligned}\end{align}

 Finally, plugging \eqref{x-2-0}-\eqref{x-2}, \eqref{x-C}, \eqref{x-A} and \eqref{x-B} into \eqref{x-0}, we obtain that
 \begin{align*}
 	\frac{d}{dt}(SU_x, U_x)+\delta\epsilon\|\nabla U_x(t,\cdot)\|_{L^2}^2
 	\leq ~&\|\partial_xE^\epsilon(t,\cdot)\|_{L^2}^2 +C\big(1+\epsilon\| U(t,\cdot)\|_{L^2}^2\big)\| U(t,\cdot)\|_{H^1}^2\nonumber\\
 	&+C\big[1+\|U(t,\cdot)\|_{L^2}^2+\epsilon\big(1+\| U(t,\cdot)\|_{L^2}^2\big)\|U(t,\cdot)\|_{H^1}^2
 	\big]	\|U_x(t,\cdot)\|_{L^2}^2,
 \end{align*}
and then, it yields that by using \eqref{L2-fin},
\begin{align}\label{x-fin}
	\frac{d}{dt}(SU_x, U_x)+\delta\epsilon\|\nabla U_x(t,\cdot)\|_{L^2}^2
 	\leq ~&\|\partial_xE^\epsilon(t,\cdot)\|_{L^2}^2+C\big(1+\epsilon\|U(t,\cdot)\|_{H^1}^2\big)\|U_x(t,\cdot)\|_{L^2}^2+C\|U(t,\cdot)\|_{H^1}^2.
 	 	\end{align}
Applying Gronwall inequality to the above inequality, and  using \eqref{x-1} yields
\begin{align}\label{x-f}\begin{aligned}
	&\|U_x(t,\cdot)\|_{L^2}^2+\epsilon\int_0^t\|\nabla U_x(s,\cdot)\|_{L^2}^2ds\\
 	\leq ~&\Big(\int_0^t\|\partial_xE^\epsilon(s,\cdot)\|_{L^2}^2ds+C\int_0^t\|U(s,\cdot)\|_{H^1}^2ds\Big)\exp\big\{C\int_0^t\big(1+\epsilon\|U(s,\cdot)\|_{H^1}^2\big)ds\big\}\\
 \leq~ &C\epsilon^{-1},\qquad t\in[0,T_*],
\end{aligned}\end{align}
where we have used \eqref{est-coe} and \eqref{L2-fin} again in the second inequality.
Thus, we obtain \eqref{est_1} from \eqref{L2-fin} and \eqref{x-f}, and complete the proof.
\end{proof}

\subsection{Energy estimates for $U_t$ and $U_{tx}$}

In this subsection, we are concerned with the $L^2-$estimates of $U_t$ and $U_{tx}$ for the problem \eqref{pr_main}.	 To this end, first of all, 
from \eqref{pr_main} we know that $U_t$ satisfies the following initial-boundary value problem:
\begin{align}\label{pr_main-t}
\begin{cases}
	\partial_t U_t+\textit{A}_1(U)\partial_x U_t+\textit{A}_2(U)\partial_y U_t+\partial_t\textit{A}_1(U)\partial_x U+\partial_t\textit{A}_2(U)\partial_y U+\partial_t\big(\textit{C}(U) U\\
	\qquad+\psi D\big)+(p_{xt},p_{yt},0,0)^T
	-\epsilon \textit{B}\triangle U_t-\epsilon\partial_t B\triangle U =\partial_t E^\epsilon,\\
		\partial_x\tilde u_t+\partial_y\tilde v_t=0,\\
		(\tilde u_t,\tilde v_t,\partial_y\tilde h_t,\tilde g_t)|_{y=0}=\mathbf 0,\quad		U_t|_{t=0}=E^\epsilon(0,x,y)-(p_x,p_y,0,0)^T(0,x,y).
\end{cases}\end{align}
Note that the initial data of $U_t$ depends on the initial pressure $p|_{t=0}$, for which we do not have any estimates. Therefore, we need to deal with the initial data $U_t|_{t=0}$ first. Actually, one has the following result.
\begin{prop}
There exists a constant $C>0$ independent of $\epsilon$, such that
\begin{align}\label{est_ini}
	\|U_t(0,\cdot)\|_{L^2}+\|\partial_x U_t(0,\cdot)\|_{L^2}~\leq~ C.
	\end{align}
\end{prop}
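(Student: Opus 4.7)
The plan is to extract $U_t|_{t=0}$ explicitly from $\eqref{pr_main}_1$ and then to control the initial pressure through a Helmholtz--Leray decomposition. Since $U|_{t=0}=0$ and $\psi|_{t=0}=0$, every term in $\eqref{pr_main}_1$ involving $U$, $\nabla U$, $\triangle U$ or $\psi$ vanishes at $t=0$, and one is left with
\begin{equation*}
U_t(0,x,y)~=~E^\epsilon(0,x,y)-\big(p_x,\,p_y,\,0,\,0\big)^T(0,x,y),
\end{equation*}
in agreement with the initial condition for $U_t|_{t=0}$ recorded in \eqref{pr_main-t}. Differentiating the divergence--free constraint $\partial_x\tilde u+\partial_y\tilde v=0$ in $t$ and using that $\tilde v|_{y=0}\equiv 0$ forces $\tilde v_t|_{y=0}=0$; hence the first two components of $U_t(0,\cdot)$ form a divergence--free field on $\mathbb{T}\times\mathbb{R}_+$ with vanishing normal trace on $\{y=0\}$.

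The main obstacle is the initial pressure $p(0,\cdot)$, which is not prescribed by the data. I would handle it through the $L^2$--orthogonal Helmholtz--Leray projection $P$ onto divergence--free fields with vanishing normal trace on $\{y=0\}$: the identity above exhibits $\big(E_1^\epsilon,E_2^\epsilon\big)(0,\cdot)=\big(\tilde u_t,\tilde v_t\big)(0,\cdot)+\nabla p(0,\cdot)$ as a Helmholtz decomposition on the half--cylinder, so $\big(\tilde u_t,\tilde v_t\big)(0,\cdot)=P\big(E_1^\epsilon,E_2^\epsilon\big)(0,\cdot)$, while the last two components of $U_t(0,\cdot)$ coincide with $E_3^\epsilon(0,\cdot)$ and $E_4^\epsilon(0,\cdot)$. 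The existence and $L^2$--contraction of $P$ on $\mathbb{T}\times\mathbb{R}_+$ are classical, reducing to variational solvability in $\dot H^1$ of the Neumann problem $\triangle q=\mathrm{div}\,F$ with $\partial_y q|_{y=0}=F_2|_{y=0}$. Since $P$ is a contraction, the bound \eqref{est_err} with $|\alpha|=0$ immediately gives $\|U_t(0,\cdot)\|_{L^2}\le\|E^\epsilon(0,\cdot)\|_{L^2}\le C$.

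For the tangential--derivative estimate I would apply $\partial_x$ to the identity above. Because $\partial_x$ preserves both divergence--freeness and the boundary condition $w_2|_{y=0}=0$ on the half--cylinder, it commutes with $P$; hence $\big(\tilde u_{tx},\tilde v_{tx}\big)(0,\cdot)=P\big(\partial_xE_1^\epsilon,\partial_xE_2^\epsilon\big)(0,\cdot)$, and applying \eqref{est_err} with $|\alpha|=1$ then gives $\|\partial_xU_t(0,\cdot)\|_{L^2}\le\|\partial_xE^\epsilon(0,\cdot)\|_{L^2}\le C$. Combining the two bounds yields \eqref{est_ini}. All of the uniformity in $\epsilon$ traces back to \eqref{est_err}; no further structural information on the still--to--be--estimated solution $U$ is required, which is why this initial--time bound can be placed before the energy argument for $U_t$ and $U_{tx}$.
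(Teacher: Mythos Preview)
Your argument is correct and is essentially the same as the paper's: both identify $U_t(0,\cdot)=E^\epsilon(0,\cdot)-(p_x,p_y,0,0)^T|_{t=0}$ and then eliminate the initial pressure via the Neumann problem coming from the divergence constraint and the condition $\tilde v_t|_{y=0}=0$. The paper writes this out explicitly as $\triangle p=\partial_xE_1^\epsilon+\partial_yE_2^\epsilon$, $\partial_yp|_{y=0}=E_2^\epsilon|_{y=0}$ and invokes elliptic regularity for $\nabla p$, whereas you package the same step as the $L^2$--contraction of the Leray projector $P$ and its commutation with $\partial_x$; these are two formulations of the same fact. One small correction: the uniform bound on $E^\epsilon$ you need is \eqref{est-coe} (Proposition~\ref{prop_coe-est}), not \eqref{est_err}, which only controls the raw remainders $r_i^\epsilon$.
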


\begin{proof}[\textbf{Proof.}]
Firstly, from the initial data of \eqref{pr_main-t} and the definition \eqref{source} of $E^\epsilon$, it is easy to obtain that for the last two components of $U_t$,
\begin{align*}
	(\tilde h_t, \tilde g_t)(0,x,y)=(E_3^\epsilon,E_4^\epsilon)(0,x,y)=(r_3^\epsilon-b^p\cdot\partial_y^{-1}r_3^\epsilon, ~r^\epsilon_4)(0,x,y),
\end{align*}
which implies that by virtue of \eqref{est-coe},
\begin{align}\label{est_ht}
	\|(\tilde h_t,\tilde g_t)(0,\cdot)\|_{L^2}+\|(\partial_x\tilde h_t,\partial_x\tilde g_t)(0,\cdot)\|_{L^2}~\leq~C.
\end{align}
Next, from \eqref{pr_main-t} it follows that for the first two component of $U_t$,
\begin{align}\label{ini_t}
	(\tilde u_t, \tilde v_t)(0,x,y)=(E_1^\epsilon,E_2^\epsilon)(0,x,y)-(p_x,p_y)(0,x,y),
	\end{align}	
and in order to estimate $(\tilde u_t, \tilde v_t)|_{t=0}$, we need to estimate $\nabla p|_{t=0}.$

Thanks to the divergence free condition $\partial_x\tilde u_t+\partial_y\tilde v_t=0$ and the boundary condition $\tilde v_t|_{y=0}=0$, from \eqref{ini_t} we obtain that $p|_{t=0}$ satisfies the following elliptic equation with the Neumann boundary condition, 
\begin{align*}\begin{cases}
	\triangle p(0,x,y)=(\partial_x E_1^\epsilon+\partial_y E_2^\epsilon)|_{t=0}=(\partial_x r_1^\epsilon+\partial_y r_2^\epsilon)|_{t=0},\\\
	 p_y(0,x,0)=E_2^\epsilon(0,x,0)=r_2^\epsilon(0,x,0).
\end{cases}
\end{align*}
Then, the standard elliptic theory yields that
\begin{align}\label{est_p}
	\|\nabla p|_{t=0}\|_{L^2}+\|\nabla p_x|_{t=0}\|_{L^2}~\leq~C\big(\|(r_1^\epsilon,r_2^\epsilon)|_{t=0}\|_{L^2}+\|\partial_x(r_1^\epsilon,r_2^\epsilon)|_{t=0}\|_{L^2} \big).
\end{align}
Combining \eqref{ini_t} with \eqref{est_p} and using \eqref{est-coe}, we know that there is a constant $C>0$ independent of $\epsilon$, such that
\begin{align}\label{est_ut}
	\|(\tilde u_t, \tilde v_t)(0,\cdot)\|_{L^2}+\|(\partial_x\tilde u_t,\partial_x\tilde v_t)(0,\cdot)\|_{L^2}~\leq~C.
\end{align}
Consequently, \eqref{est_ini} follows immediately from \eqref{est_ht} and \eqref{est_ut}.
\end{proof}

As the estimate on $U_t|_{t=0}$ has been obtained, we can obtain the following result for $U_t$.
\begin{prop}\label{prop_Ut}
Under the assumptions of Proposition \ref{prop_U}, it holds
\begin{equation}\label{est_2}
\epsilon\|U_t(t,\cdot)\|_{L^2}^2+\epsilon^2\| U_{tx}(t,\cdot)\|^2_{L^2}+\epsilon^2\int_0^t\big(\|\nabla U_t(s,\cdot)\|^2_{L^2}+\epsilon\|\nabla U_{tx}(s,\cdot)\|^2_{L^2}\big)ds\leq C,\quad\forall~ t\in[0,T_*]
\end{equation}
for some constant $C>0$ independent of $\epsilon$.
\end{prop}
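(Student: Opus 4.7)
\medskip

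The plan is to mirror almost verbatim the two-step energy argument used in the proof of Proposition \ref{prop_U}, applied now to the problem \eqref{pr_main-t} for $U_t$ and to its $x$-derivative for $U_{tx}$. The new feature compared with Proposition \ref{prop_U} is that the initial data $U_t|_{t=0}$ is not zero; however the bound \eqref{est_ini} gives $\|U_t(0,\cdot)\|_{L^2}+\|\partial_x U_t(0,\cdot)\|_{L^2}\le C$ independently of $\epsilon$, so the same Gronwall scheme applies, and the factor $\epsilon^{-1}$ picked up per derivative accounts for the shape of \eqref{est_2}.

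For the $L^2$ estimate of $U_t$, I would multiply $\eqref{pr_main-t}_1$ by $S$ from the left, test against $U_t$, and integrate. The pressure contribution $(S(p_{xt},p_{yt},0,0)^T,U_t)$ again vanishes by integration by parts using $\partial_x\tilde u_t+\partial_y\tilde v_t=0$ together with $\tilde v_t|_{y=0}=0$. The terms $(SA_1(U)\partial_xU_t+SA_2(U)\partial_yU_t,U_t)$ and $-\epsilon(SB\triangle U_t,U_t)$ are treated exactly as in Step (1) of Proposition \ref{prop_U} (using the symmetry of $SA_i^a$ and $S\tilde A_i(U)$, the boundary conditions $SA_2^a|_{y=0}=0$, the divergence-free condition, Lemma \ref{lem_equ}, the Sobolev inequality \eqref{4-2}, the positivity \eqref{SB}, and $\partial_y(SB)=O(\epsilon^{-1/2})$), producing the dissipation $\delta\epsilon\|\nabla U_t\|_{L^2}^2$ modulo terms absorbable into $C(1+\|U\|_{L^2}^2)\|U_t\|_{L^2}^2$. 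The genuinely new commutators $\partial_tA_i(U)\partial_iU$ and $\partial_t(C(U)U+\psi D)$ are split via \eqref{def_A}--\eqref{def_C} into their $A_i^a+\sqrt\epsilon A_i^p$ and $\epsilon\tilde A_i(U)$ parts: by \eqref{est-coe} the first two give bounds $\|\nabla U\|_{L^2}\|U_t\|_{L^2}$, and by \eqref{est_AU}, \eqref{equ_norm} and \eqref{4-2} the nonlinear parts are absorbable into $\tfrac{\delta\epsilon}{16}\|\nabla U_t\|_{L^2}^2$. The commutator $-\epsilon(S\partial_tB\triangle U,U_t)$ is handled by a single integration by parts in $\triangle$ so as to trade one derivative on $U$ for one on $U_t$, yielding $C\epsilon\|\nabla U_t\|_{L^2}\|\nabla U\|_{L^2}+C\sqrt\epsilon\|\nabla U\|_{L^2}\|U_t\|_{L^2}$, again absorbable via the dissipation. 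The source $\partial_tE^\epsilon$ is uniformly bounded in $L^2$ by \eqref{est_err}. Collecting yields a differential inequality of the form
\[
\frac{d}{dt}(SU_t,U_t)+\delta\epsilon\|\nabla U_t\|_{L^2}^2\;\le\;C\bigl(1+\|U\|_{L^2}^2\bigr)\|U_t\|_{L^2}^2+C\|\nabla U\|_{L^2}^2+\|\partial_tE^\epsilon\|_{L^2}^2,
\]
and Gronwall combined with \eqref{est_ini} and the bound $\epsilon\int_0^t\|\nabla U\|_{L^2}^2\,ds\le C$ of \eqref{est_1} gives $\|U_t(t,\cdot)\|_{L^2}^2\le C\epsilon^{-1}$ and $\int_0^t\|\nabla U_t(s,\cdot)\|_{L^2}^2\,ds\le C\epsilon^{-2}$, which are precisely the first and third contributions to \eqref{est_2}.

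For $U_{tx}$, I would apply $\partial_x$ to $\eqref{pr_main-t}_1$ and repeat the argument, adding the commutators $\partial_xA_i(U)\partial_iU_t$, $\partial_x\partial_tA_i(U)\partial_iU$, $\partial_x[\partial_t(C(U)U+\psi D)]$ and $\epsilon\partial_x\partial_tB\triangle U$, all of which are bounded exactly as the analogous commutators in Step (2) of Proposition \ref{prop_U} (using \eqref{est-coe}, \eqref{est_AU} with $p=4$, and the interpolation \eqref{4-2}). The pressure contribution $(p_{xxt},p_{yxt},0,0)^T$ again integrates to zero against $U_{tx}$. Because $\int_0^t\|U_t\|_{H^1}^2+\|U_x\|_{H^1}^2\,ds$ appearing in the right-hand side is only $O(\epsilon^{-2})$ (by the first step and by \eqref{est_1}), Gronwall together with \eqref{est_ini} yields $\|U_{tx}(t,\cdot)\|_{L^2}^2\le C\epsilon^{-2}$ and $\epsilon\int_0^t\|\nabla U_{tx}\|_{L^2}^2\,ds\le C\epsilon^{-2}$, which completes \eqref{est_2}. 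The main obstacle I anticipate is the careful bookkeeping of the cubic nonlinear commutator $\epsilon\partial_t\tilde A_i(U)\partial_iU$ and its $x$-derivative: since $\partial_t\tilde A_i(U)$ contains $U_t$ itself, one must use the Sobolev-type interpolation \eqref{4-2} together with the fact that the prefactor $\epsilon$ combines with $\|\nabla U\|_{L^2}\le C\epsilon^{-1/2}$ to leave just enough smallness to be absorbed into the $\epsilon\|\nabla U_t\|_{L^2}^2$ and $\epsilon^2\|\nabla U_{tx}\|_{L^2}^2$ dissipation quanta without producing a Gronwall exponential worse than $\epsilon^{-2}$.
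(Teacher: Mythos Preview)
Your proposal is correct and follows essentially the same two-step energy scheme as the paper: the $U_t$ estimate mirrors the $U_x$ step of Proposition \ref{prop_U} with the initial data handled by \eqref{est_ini}, and the $U_{tx}$ estimate proceeds by differentiating once more and controlling the second-order commutators. One small simplification the paper employs that you may want to adopt: for the quadratic term $\epsilon\big(\partial_{tx}^2(\tilde C(U)U+\psi^2 D^p),\,SU_{tx}\big)$ the paper first integrates by parts in $x$ to write it as $-\epsilon\big(\partial_t(\tilde C(U)U+\psi^2 D^p),\,S\partial_xU_{tx}+S_xU_{tx}\big)$, so that only one tangential derivative falls on $\tilde C(U)$ and the resulting $\|\partial_t(\tilde C(U)U)\|_{L^2}^2$ can be bounded exactly as in \eqref{tC-x}; this avoids having $\partial_{tx}^2\tilde C(U)$ (which contains $U_{tx}$) multiplied against $U$, and makes the bookkeeping you flag as the main obstacle considerably cleaner.
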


\begin{proof}[\textbf{Proof.}]
	The desired estimate of $U_t$ can be obtained in a similar way as the one for $U_x$, given in the second step of Proposition \ref{prop_U}. Indeed, we can obtain
	 \begin{align*}
	\frac{d}{dt}(SU_t, U_t)+\delta\epsilon\|\nabla U_t(t,\cdot)\|_{L^2}^2
 	\leq~& \|\partial_t E^\epsilon(t,\cdot)\|_{L^2}^2+C\big(1+\epsilon\|U(t,\cdot)\|_{H^1}^2\big)\|U_t(t,\cdot)\|_{L^2}^2+C\|U(t,\cdot)\|_{H^1}^2,
 	\end{align*}
 and then, applying the Gronwall inequality to the above inequality, one deduces that
  \begin{align}\label{t-f}\begin{aligned}
	&\|U_t(t,\cdot)\|_{L^2}^2+\epsilon\int_0^t\|\nabla U_t(s,\cdot)\|_{L^2}^2ds\\
	\leq~&\Big(\|U_t(0,\cdot)\|_{L^2}^2+\int_0^t\|\partial_t E^\epsilon(s,\cdot)\|_{L^2}^2ds+C\int_0^t\|U(s,\cdot)\|_{H^1}^2ds\Big)\cdot\exp\big\{C\int_0^t\big(1+\epsilon\|U(s,\cdot)\|_{H^1}^2\big)ds\big\}\\	
	\leq~ &C\epsilon^{-1},\qquad t\in[0,T_*],
\end{aligned}\end{align}
where we have used \eqref{est-coe}, \eqref{L2-fin} and \eqref{est_ini} in the above second inequality.

It remains to obtain the estimate of $U_{tx}$. From \eqref{pr_main-t} we know that $U_{tx}$ satisfies the following initial-boundary value problem:
\begin{align}\label{pr_main-tx}
\begin{cases}
	\partial_t U_{tx}
	+\textit{A}_1(U)\partial_x U_{tx}+\textit{A}_2(U)\partial_y U_{tx}+(\partial_{tx}^2 p_{x}, \partial_{tx}^2p_{y},0,0)^T+\partial_{tx}^2\big(C(U)U+\psi D\big)-\epsilon B\triangle U_{tx} \\
	\qquad+\big[\partial_{tx}^2, \textit{A}_1(U)\partial_x +\textit{A}_2(U)\partial_y\big] U-\epsilon\big[\partial_{tx}^2, \textit{B}\big]\triangle U=\partial_{tx}^2 E^\epsilon,\\
		\partial_x\tilde u_{tx}+\partial_y\tilde v_{tx}=0,\\
		(\tilde u_{tx}, \tilde v_{tx},\partial_y\tilde h_{tx}, \tilde g_{tx})|_{y=0}=\mathbf 0,\quad	U_{tx}|_{t=0}=\partial_x E^\epsilon(0,x,y)-(p_{xx},p_{yx},0,0)^T(0,x,y),
\end{cases}\end{align}
where the notation $[\cdot,\cdot]$ stands for the commutator.

Multiplying $\eqref{pr_main-tx}_1$ by applying $S$ from the left and taking the inner product of the resulting equations and $U_{tx}$, it follows that
\begin{align}\label{tx-0}\begin{aligned}
&\frac{d}{2dt}(SU_{tx}, U_{tx})+\Big(S\big[A_1(U)\partial_x U_{tx}+A_2(U)\partial_y U_{tx}\big]-\epsilon SB\triangle U_{tx}, U_{tx}\Big)+\big(S\big(\partial_{tx}^2p_{x}, \partial_{tx}^2p_{y},0,0\big)^T, U_{tx}\Big)\\
&+\big(S\big[\partial_{tx}^2\big(C(U)U+\psi D\big)-\frac{1}{2}S_tU_{tx}\big], U_{tx}\big)+\big(S\big[\partial_{tx}^2, A_1(U) \partial_x+A_2(U) \partial_y\big]U, U_{tx}\big)\\
&-\epsilon\big(S[\partial_{tx}^2, B]\triangle U, U_{tx}\big)=\big(\partial_{tx}^2 E^\epsilon, U_{tx}\big).
\end{aligned} \end{align}

Now, each term in \eqref{tx-0} can be estimated as follows. Firstly, by a similar argument as given in the above step for $L^2$-norm of $U$, one can obtain that
\begin{align}\label{tx-1}
	&\big(SU_{tx}, U_{tx}\big)~\geq~c_\delta\|U_{tx}(t,\cdot)\|_{L^2}^2,
	\end{align}
and
\begin{align}
	&\big|\big(SA_1(U)\partial_x U_{tx}+SA_2(U)\partial_y U_{tx}, U_{tx}\big)\big|\leq\frac{\delta\epsilon}{8}\|\nabla U_{tx}(t,\cdot)\|_{L^2}^2+C\big(1+\|U(t,\cdot)\|_{L^2}^2\big)\|U_{tx}(t,\cdot)\|_{L^2}^2,\label{tx-2-0}\\
	&-\epsilon(SB\triangle U_{tx},U_{tx})\geq\frac{3\delta\epsilon}{4}\|\nabla U_{tx}(t,\cdot)\|_{L^2}^2-C\|U_{tx}(t,\cdot)\|_{L^2}^2,\nonumber\\
	&\big(S(\partial_{tx}^2p_{x}, \partial_{tx}^2p_{y}, 0, 0)^T, U_{tx}\big)=0,\quad (S\partial_{tx}^2 E^\epsilon, U_{tx})\leq \frac{1}{2}\|\partial_{tx}^2 E^\epsilon(t,\cdot)\|_{L^2}^2+C\|U_{tx}(t,\cdot)\|_{L^2}^2.\label{tx-2}
\end{align}

Next, we proceed to estimate the other terms in \eqref{tx-0}. By  \eqref{def_C}, it follows that for the term $\big(S\big[\partial_{tx}^2\big(C(U)U+\psi D\big)-\frac{1}{2}\partial_t S~U_{tx}\big], U_{tx}\big),$
\begin{align*}
	&\Big(S\big[\partial_{tx}^2\big(C(U)U+\psi D\big)-\frac{1}{2}\partial_t S~U_{tx}\big], ~U_{tx}\Big)\\
	=~&\Big(S\big[\partial_{tx}^2\big(C^a U+\psi D^a\big)-\frac{1}{2}\partial_t S~U_{tx}\big], ~U_{tx}\Big)+\epsilon\Big(\partial_{tx}^2\big(\tilde C(U)U+\psi^2 D^p\big), ~SU_{tx}\Big)\\
	\triangleq~&I_1+I_2.
\end{align*}
For $I_1$, note that
\begin{align*}
	\partial_{tx}^2\big(C^a U+\psi D^a\big)=C^a U_{tx}+\partial_{tx}^2\psi D^a+\partial_xC^a U_t+\partial_t\psi \partial_x D^a+\partial_t C^a U_x+\partial_x\psi \partial_t D^a+\partial_{tx}^2C^a U+\psi \partial_{tx}^2D^a,
\end{align*}
which, along with \eqref{est-coe} yields that
\begin{align*}
	\big\|\partial_{tx}^2\big(C^a U+\psi D^a\big)\big\|_{L^2}\lesssim~&\|U_{tx}(t,\cdot)\|_{L^2}+\|y^{-1}\partial_{tx}^2\psi(t,\cdot)\|_{L^2}+\|U_{t}(t,\cdot)\|_{L^2}+\|y^{-1}\partial_{t}\psi(t,\cdot)\|_{L^2}\\
	&+\|U_{x}(t,\cdot)\|_{L^2}+\|y^{-1}\partial_{x}\psi(t,\cdot)\|_{L^2}+\|U(t,\cdot)\|_{L^2}+\|y^{-1}\psi(t,\cdot)\|_{L^2}\\
	\lesssim~&\|U_{tx}(t,\cdot)\|_{L^2}+\|U_{t}(t,\cdot)\|_{L^2}+\|U_{x}(t,\cdot)\|_{L^2}+\|U(t,\cdot)\|_{L^2}\\
	&+\|\tilde h_{tx}(t,\cdot)\|_{L^2}+\|\tilde h_{t}(t,\cdot)\|_{L^2}+\|\tilde h_{x}(t,\cdot)\|_{L^2}+\|\tilde h(t,\cdot)\|_{L^2}\\
	\lesssim~&\|U_{tx}(t,\cdot)\|_{L^2}+\|U_{t}(t,\cdot)\|_{L^2}+\|U_{x}(t,\cdot)\|_{L^2}+\|U(t,\cdot)\|_{L^2}.
\end{align*}
Thus, we obtain
\begin{align}\label{Ca-I1}\begin{aligned}
	|I_1|\leq~&\big\|\partial_{tx}^2\big(C^a U+\psi D^a\big)\big\|_{L^2}\|SU_{tx}(t,\cdot)\|_{L^2}+\frac{1}{2}\|S_t\|_{L^\infty}\|U_{tx}(t,\cdot)\|_{L^2}^2\\
	\lesssim~&\|U_{tx}(t,\cdot)\|_{L^2}^2+\|U_{t}(t,\cdot)\|_{L^2}^2+\|U_{x}(t,\cdot)\|_{L^2}^2+\|U(t,\cdot)\|_{L^2}^2.
\end{aligned}\end{align}
Next, by integration by parts with respect to $x$, the term $I_2$ can be reduced as
\begin{align*}
	I_2=-\epsilon\Big(\partial_t\big(\tilde C(U)U+\psi^2 D^p\big), S\partial_x U_{tx}+S_xU_{tx}\Big),
\end{align*}
and then, 
\begin{align*}
	|I_2|\leq~&\frac{\delta\epsilon}{24}\|\partial_x U_{tx}(t,\cdot)\|_{L^2}^2+C\epsilon\big\|\partial_t\big(\tilde C(U)U+\psi^2 D^p\big)\big\|_{L^2}^2+C\epsilon\|U_{tx}(t,\cdot)\|_{L^2}^2.
\end{align*}
Similar to \eqref{tC-x}, we can obtain that
\begin{align*}
	\epsilon\big\|\partial_t(\tilde C(U)U+\psi^2 D^p)\big\|_{L^2}^2\lesssim~&\|U_t(t,\cdot)\|_{H^1}^2+\|U(t,\cdot)\|_{L^2}^2\|U(t,\cdot)\|_{H^1}^2\|U_t(t,\cdot)\|_{L^2}^2
 	+\|U(t,\cdot)\|_{L^2}^2\|U(t,\cdot)\|_{H^1}^2,
\end{align*}
 then combining the above two inequalities yields that
\begin{align}\label{Ca-I2}
	|I_2|\leq~&\frac{\delta\epsilon}{24}\|\partial_x U_{tx}(t,\cdot)\|_{L^2}^2+C\|U_t(t,\cdot)\|_{H^1}^2+C\|U(t,\cdot)\|_{L^2}^2\|U(t,\cdot)\|_{H^1}^2\big(1+\|U_t(t,\cdot)\|_{L^2}^2\big).
 \end{align}
Consequently, we get that by combining \eqref{Ca-I1} with \eqref{Ca-I2},
\begin{align}\label{tx-C}
\begin{aligned}
	&\big|\big(S\big[\partial_{tx}^2\big(C(U)U+\psi D\big)-\frac{1}{2}\partial_t S~U_{tx}\big], U_{tx}\big)\big|\\
	\leq~&\frac{\delta\epsilon}{24}\|\partial_x U_{tx}(t,\cdot)\|_{L^2}^2+C\|U_t(t,\cdot)\|_{H^1}^2+C\|U(t,\cdot)\|_{L^2}^2\|U(t,\cdot)\|_{H^1}^2\big(1+\|U_t(t,\cdot)\|_{L^2}^2\big)+C\|U(t,\cdot)\|_{H^1}^2.
\end{aligned}\end{align}

Next, we consider the term $\big(S\big[\partial_{tx}^2, A_1(U) \partial_x+A_2(U) \partial_y\big]U, U_{tx}\big)$. By \eqref{def_A} and direct calculation, one gets that
\begin{align}\label{A-I}\begin{aligned}
	\big(S\big[\partial_{tx}^2, A_1(U) \partial_x+A_2(U) \partial_y\big]U, U_{tx}\big)=~&\big(\big[\partial_{tx}^2, \big(A_1^a+\sqrt{\epsilon}A_1^p\big) \partial_x+\big(A_1^a+\sqrt{\epsilon}A_2^p\big) \partial_y\big]U, SU_{tx}\big)\\
	&+\epsilon\big(\big[\partial_{tx}^2, \tilde A_1(U) \partial_x+\tilde A_2(U) \partial_y\big]U, SU_{tx}\big)\\
	\triangleq~&I_3+I_4.
\end{aligned}\end{align}
From \eqref{est-coe}, it is easy to have
\begin{align}\label{A-I3}
\begin{aligned}
	|I_3|\lesssim~&\|SU_{tx}(t,\cdot)\|_{L^2}\Big(\|\nabla U_t(t,\cdot)\|_{L^2}+\|\nabla U_x(t,\cdot)\|_{L^2}+\|\nabla U(t,\cdot)\|_{L^2}\Big)\\
	\lesssim~&\|\nabla U_t(t,\cdot)\|_{L^2}^2+\|\nabla U_x(t,\cdot)\|_{L^2}^2+\|\nabla U(t,\cdot)\|_{L^2}^2.
\end{aligned}\end{align}
We know that $I_4$ in \eqref{A-I} reads:
\begin{align*}
	I_4=~\epsilon\Big(&\big[\partial_{x}\tilde A_1(U) \partial_{tx}^2+\partial_x\tilde A_2(U) \partial_{ty}^2\big]U+\big[\partial_{t}\tilde A_1(U) \partial_{x}^2+\partial_t\tilde A_2(U) \partial_{xy}^2\big]U\nonumber\\
	&+\big[\partial_{tx}^2\tilde A_1(U) \partial_{x}+\partial_{tx}^2\tilde A_2(U) \partial_{y}\big]U, SU_{tx}\Big),
\end{align*}
 which implies that by virtue of \eqref{est_AU},
 \begin{align*}
 	|I_4|\lesssim~&\epsilon\|SU_{tx}(t,\cdot)\|_{L^4}\Big\{\|\nabla U_t(t,\cdot)\|_{L^2}\cdot\sum_{i=1}^2\|\partial_x\tilde A_i(U)\|_{L^4}+\|\nabla U_x(t,\cdot)\|_{L^2}\cdot\sum_{i=1}^2\|\partial_t\tilde A_i(U)\|_{L^4}\nonumber\\
 	&\hspace{1in} +\|\nabla U(t,\cdot)\|_{L^2}\cdot\sum_{i=1}^2\|\partial_{tx}^2\tilde A_i(U)\|_{L^4}\Big\}\nonumber\\
 	\lesssim~&\|\nabla U_t(t,\cdot)\|_{L^2}^2+\|\nabla U_x(t,\cdot)\|_{L^2}^2+\epsilon^2\|U_{tx}(t,\cdot)\|_{L^4}^2\Big(\|U_x(t,\cdot)\|_{L^4}^2+\|U_t(t,\cdot)\|_{L^4}^2+\|U(t,\cdot)\|_{L^4}^2\Big)\nonumber\\
 	&+\epsilon\|\nabla U(t,\cdot)\|_{L^2}\|U_{tx}(t,\cdot)\|_{L^4}\Big(\|U_{tx}(t,\cdot)\|_{L^4}+\|U_t(t,\cdot)\|_{L^4}+\|U_x(t,\cdot)\|_{L^4}+\|U(t,\cdot)\|_{L^4}\Big)\nonumber\\
 	\leq~&C\big(\|\nabla U_t(t,\cdot)\|_{L^2}^2+\|\nabla U_x(t,\cdot)\|_{L^2}^2+\|\nabla U(t,\cdot)\|_{L^2}^2\big)+C\epsilon\|\nabla U(t,\cdot)\|_{L^2}\|U_{tx}(t,\cdot)\|_{L^4}^2\nonumber\\
 	&+C\epsilon^2\|U_{tx}(t,\cdot)\|_{L^4}^2\Big(\|U_x(t,\cdot)\|_{L^4}^2+\|U_t(t,\cdot)\|_{L^4}^2+\|U(t,\cdot)\|_{L^4}^2\Big).
 \end{align*}
Also, it follows that by \eqref{4-2}, 
\begin{align*}
	C\epsilon\|\nabla U(t,\cdot)\|_{L^2}\|U_{tx}(t,\cdot)\|_{L^4}^2\lesssim~&\epsilon\|\nabla U(t,\cdot)\|_{L^2}\|U_{tx}(t,\cdot)\|_{L^2}\|U_{tx}(t,\cdot)\|_{H^1}\nonumber\\
	\leq~&\frac{\delta\epsilon}{48}\|\nabla U_{tx}(t,\cdot)\|_{L^2}^2+C\big(1+\epsilon\|\nabla U(t,\cdot)\|_{L^2}^2\big)\| U_{tx}(t,\cdot)\|_{L^2}^2,
\end{align*}
and
\begin{align*}
	&C\epsilon^2\|U_{tx}(t,\cdot)\|_{L^4}^2\Big(\|U_x(t,\cdot)\|_{L^4}^2+\|U_t(t,\cdot)\|_{L^4}^2+\|U(t,\cdot)\|_{L^4}^2\Big)\nonumber\\
	\lesssim~&\epsilon^2\|U_{tx}(t,\cdot)\|_{L^2}\|U_{tx}(t,\cdot)\|_{H^1}\Big(\|U_{x}(t,\cdot)\|_{L^2}\|U_{x}(t,\cdot)\|_{H^1}+\|U_{t}(t,\cdot)\|_{L^2}\|U_{t}(t,\cdot)\|_{H^1}+\|U(t,\cdot)\|_{L^2}\|U(t,\cdot)\|_{H^1}\Big)\nonumber\\
	\leq~&\frac{\delta\epsilon}{48}\|\nabla U_{tx}(t,\cdot)\|_{L^2}^2+C\Big(1+\epsilon^3\|U_{x}(t,\cdot)\|_{L^2}^2\|U_{x}(t,\cdot)\|_{H^1}^2+\epsilon^3\|U_{t}(t,\cdot)\|_{L^2}^2\|U_{t}(t,\cdot)\|_{H^1}^2\nonumber\\
	&\hspace{1.4in} +\epsilon^3\|U(t,\cdot)\|_{L^2}^2\|U(t,\cdot)\|_{H^1}^2\Big)\| U_{tx}(t,\cdot)\|_{L^2}^2.
\end{align*}
Collecting the above three inequalities yields that
\begin{align}\label{A-I4}
\begin{aligned}
	|I_4|\leq~&\frac{\delta\epsilon}{24}\|\nabla U_{tx}(t,\cdot)\|_{L^2}^2+C\big(\|\nabla U_t(t,\cdot)\|_{L^2}^2+\|\nabla U_x(t,\cdot)\|_{L^2}^2+\|\nabla U(t,\cdot)\|_{L^2}^2\big)\\
	&+C\| U_{tx}(t,\cdot)\|_{L^2}^2\cdot\Big(1+\epsilon\|\nabla U(t,\cdot)\|_{L^2}^2+\epsilon^3\|U_{x}(t,\cdot)\|_{L^2}^2\|U_{x}(t,\cdot)\|_{H^1}^2\\
	&\hspace{1.2in} +\epsilon^3\|U_{t}(t,\cdot)\|_{L^2}^2\|U_{t}(t,\cdot)\|_{H^1}^2+\epsilon^3\|U(t,\cdot)\|_{L^2}^2\|U(t,\cdot)\|_{H^1}^2\Big).
\end{aligned}\end{align}
Then, substituting \eqref{A-I3} and \eqref{A-I4} into \eqref{A-I}, one has
\begin{align}\label{tx-A}
\begin{aligned}
	&\Big|\big(S\big[\partial_{tx}^2, A_1(U) \partial_x+A_2(U) \partial_y\big]U, U_{tx}\big)\Big|\\
	\leq~&\frac{\delta\epsilon}{24}\|\nabla U_{tx}(t,\cdot)\|_{L^2}^2+C\big(\|\nabla U_t(t,\cdot)\|_{L^2}^2+\|\nabla U_x(t,\cdot)\|_{L^2}^2+\|\nabla U(t,\cdot)\|_{L^2}^2\big)\\
	&+C\| U_{tx}(t,\cdot)\|_{L^2}^2\cdot\Big(1+\epsilon\|\nabla U(t,\cdot)\|_{L^2}^2+\epsilon^3\|U_{x}(t,\cdot)\|_{L^2}^2\|U_{x}(t,\cdot)\|_{H^1}^2\\
	&\hspace{1.2in} +\epsilon^3\|U_{t}(t,\cdot)\|_{L^2}^2\|U_{t}(t,\cdot)\|_{H^1}^2+\epsilon^3\|U(t,\cdot)\|_{L^2}^2\|U(t,\cdot)\|_{H^1}^2\Big).
\end{aligned}\end{align}

Now, let us estimate the term $-\epsilon\big(S[\partial_{tx}^2, B]\triangle U, U_{tx}\big)$. Firstly, it follows that by integration by parts,
\begin{align}\label{B-I}
\begin{aligned}
	&-\epsilon\big(S[\partial_{tx}^2, B]\triangle U, U_{tx}\big)\\
	=~&\epsilon\Big(S[\partial_{tx}^2, B]\partial_x U, \partial_x U_{tx}\Big)+\epsilon\Big(S[\partial_{tx}^2, B]\partial_y U, \partial_y U_{tx} \Big)\\
	&+\epsilon\Big(S[\partial_{tx}^2, B_x]\partial_x U, U_{tx}\Big)+\epsilon\Big(S_x[\partial_{tx}^2, B]\partial_x U,  U_{tx}\Big)+\epsilon\Big(S[\partial_{tx}^2, B_y]\partial_y U, U_{tx}\Big)+\epsilon\Big(S_y[\partial_{tx}^2, B]\partial_y U,  U_{tx} \Big)\\
	\triangleq~&I_5+I_6.
\end{aligned}\end{align}
It is easy to obtain by \eqref{est-coe} that
\begin{align}\label{B-I5}
\begin{aligned}
	|I_5|\leq~&\epsilon\|\nabla U_{tx}(t,\cdot)\|_{L^2}\Big(\big\|S[\partial_{tx}^2, B]\partial_x U\big\|_{L^2}+\big\|S[\partial_{tx}^2, B]\partial_y U\big\|_{L^2}\Big)\\
	\lesssim~&\epsilon\|\nabla U_{tx}(t,\cdot)\|_{L^2}\big(\|\nabla U_{t}(t,\cdot)\|_{L^2}+\|\nabla U_{x}(t,\cdot)\|_{L^2}+\|\nabla U(t,\cdot)\|_{L^2}\big)\\
	\leq~&\frac{\delta\epsilon}{24}\|\nabla U_{tx}(t,\cdot)\|_{L^2}^2+C\epsilon\big(\|\nabla U_t(t,\cdot)\|_{L^2}^2+\|\nabla U_x(t,\cdot)\|_{L^2}^2+\|\nabla U(t,\cdot)\|_{L^2}^2\big).
\end{aligned}\end{align}
Note that $S_y, B_y=O(\epsilon^{-\frac{1}{2}})$, then we obtain
\begin{align}\label{B-I6}
\begin{aligned}
	|I_6|	\lesssim~&\sqrt{\epsilon}\|U_{tx}(t,\cdot)\|_{L^2}\big(\|\nabla U_{t}(t,\cdot)\|_{L^2}+\|\nabla U_{x}(t,\cdot)\|_{L^2}+\|\nabla U(t,\cdot)\|_{L^2}\big)\\
	\leq~&C\|U_{tx}(t,\cdot)\|_{L^2}^2+C\epsilon\big(\|\nabla U_t(t,\cdot)\|_{L^2}^2+\|\nabla U_x(t,\cdot)\|_{L^2}^2+\|\nabla U(t,\cdot)\|_{L^2}^2\big).
\end{aligned}\end{align}
Thus, plugging \eqref{B-I5} and \eqref{B-I6} into \eqref{B-I} implies that
\begin{align}\label{tx-B}
	&\big|\epsilon\big(S[\partial_{tx}^2, B]\triangle U, U_{tx}\big)\big|\nonumber\\
	\leq~&\frac{\delta\epsilon}{24}\|\nabla U_{tx}(t,\cdot)\|_{L^2}^2+C\|U_{tx}(t,\cdot)\|_{L^2}^2+C\epsilon\big(\|\nabla U_t(t,\cdot)\|_{L^2}^2+\|\nabla U_x(t,\cdot)\|_{L^2}^2+\|\nabla U(t,\cdot)\|_{L^2}^2\big).
\end{align}

Finally, we substitute \eqref{tx-2-0}-\eqref{tx-2}, \eqref{tx-C}, \eqref{tx-A} and \eqref{tx-B} into \eqref{tx-0}, to obtain
\begin{align*}
	&\frac{d}{dt}(SU_{tx}, U_{tx})+\delta\epsilon\|\nabla U_{tx}(t,\cdot)\|_{L^2}^2\nonumber\\
 	\leq ~&\|\partial_{tx}^2E^\epsilon(t,\cdot)\|_{L^2}^2+C\big(\|U_t(t,\cdot)\|_{H^1}^2+\|U_x(t,\cdot)\|_{H^1}^2+\|U(t,\cdot)\|_{H^1}^2\big)+C\|U(t,\cdot)\|_{L^2}^2\|U(t,\cdot)\|_{H^1}^2\big(1+\|U_t(t,\cdot)\|_{L^2}^2\big)\nonumber\\
	&+C\| U_{tx}(t,\cdot)\|_{L^2}^2\cdot\Big(1+\|U(t,\cdot)\|_{L^2}^2+\epsilon\|\nabla U(t,\cdot)\|_{L^2}^2+\epsilon^3\|U_{x}(t,\cdot)\|_{L^2}^2\|U_{x}(t,\cdot)\|_{H^1}^2\nonumber\\
	&\hspace{1.2in} +\epsilon^3\|U_{t}(t,\cdot)\|_{L^2}^2\|U_{t}(t,\cdot)\|_{H^1}^2+\epsilon^3\|U(t,\cdot)\|_{L^2}^2\|U(t,\cdot)\|_{H^1}^2\Big),
\end{align*}
which, along with \eqref{est_1} and \eqref{t-f} implies that
\begin{align*}
	&\frac{d}{dt}(SU_{tx}, U_{tx})+\delta\epsilon\|\nabla U_{tx}(t,\cdot)\|_{L^2}^2\\
	\leq ~&\|\partial_{tx}^2E^\epsilon(t,\cdot)\|_{L^2}^2+C\big(\|U_t(t,\cdot)\|_{H^1}^2+\|U_x(t,\cdot)\|_{H^1}^2+\epsilon^{-1}\|U(t,\cdot)\|_{H^1}^2\big)\nonumber\\
	&+C\| U_{tx}(t,\cdot)\|_{L^2}^2\cdot\Big(1+\epsilon\|U(t,\cdot)\|_{H^1}^2+\epsilon^2\|U_{x}(t,\cdot)\|_{H^1}^2+\epsilon^2\|U_{t}(t,\cdot)\|_{H^1}^2\Big).
\end{align*}
Consequently, applying the Gronwall inequality to the above inequality, it holds that
  \begin{align}\label{tx-f}
  \begin{aligned}
	&\|U_{tx}(t,\cdot)\|_{L^2}^2+\epsilon\int_0^t\|\nabla U_{tx}(s,\cdot)\|_{L^2}^2ds\\
	\leq~&\Big(\|U_{tx}(0,\cdot)\|_{L^2}^2+\int_0^t\|\partial_{tx}^2 E^\epsilon(s,\cdot)\|_{L^2}^2ds+C\int_0^t\big(\|U_t(s,\cdot)\|_{H^1}^2+\|U_x(s,\cdot)\|_{H^1}^2+\epsilon^{-1}\|U(s,\cdot)\|_{H^1}^2\big)ds\Big)\\
	&\cdot\exp\big\{C\int_0^t\big(1+\epsilon\|U(s,\cdot)\|_{H^1}^2++\epsilon^2\|U_{x}(t,\cdot)\|_{H^1}^2+\epsilon^2\|U_{t}(t,\cdot)\|_{H^1}^2\big)ds\big\}\\	
	\leq~ &C\epsilon^{-2},\qquad t\in[0,T_*],
\end{aligned}\end{align}
where we have used \eqref{est-coe}, \eqref{est_1},  \eqref{est_ini} and \eqref{t-f} in the second inequality. Thus, from \eqref{t-f} and \eqref{tx-f} we obtain \eqref{est_2} and complete the proof.

\end{proof}

\subsection{Proof of the main theorem}

Now, we are ready to prove our main theorem.
\begin{proof}[\textbf{Proof of Theorem \ref{MAIN THM}.}]
From \eqref{def_err}, one has
\begin{align}\label{Def_err}
(\mathbf{u^\epsilon}, \mathbf{H^\epsilon}, p^\epsilon)=(\mathbf{u^{a}}, \mathbf{H^{a}}, p^a)
+\epsilon(\mathbf{u}, \mathbf{H}, p),
\end{align}
then combining with the expression \eqref{NAP} of the approximate solution $(\mathbf{u^a}, \mathbf{H^a}, p^a)$, the local existence in $[0,T_*]$ of the solution $(\mathbf{u^\epsilon}, \mathbf{H^\epsilon},  p^\epsilon)$ to the problem \eqref{1.1}-\eqref{BCM} follows from the local existence of $(\mathbf{u}, \mathbf{H}, p)$ given in Proposition \ref{prop_U}. Also, we know that from the expression \eqref{NAP} for $(\mathbf{u^a}, \mathbf{H^a}, p^a)$,
\begin{align}\label{L-inf-a}
	(\mathbf{u^a}, \mathbf{H^a})(t,x,y)=(\mathbf{u^0}, \mathbf{H^0})(t,x,y)+\big(u^0_b, \sqrt{\epsilon}v^0_b, h^0_b, \sqrt{\epsilon}g^0_b\big)\big(t,x,\frac{y}{\sqrt{\epsilon}}\big)+O(\sqrt{\epsilon}).
\end{align}
Therefore, combining \eqref{Def_err} with \eqref{L-inf-a} we only need to obtain the $L^\infty$-estimate of $(\mathbf{u}, \mathbf{H})$ to show \eqref{ASP}, and further, it turns out to get the $L^\infty$-estimate of $U$ by utilizing Lemma \ref{lem_equ}.

Next, it suffices to get the $L^\infty$-estimate of $U$ from the estimates \eqref{est_1} and \eqref{est_2}. Indeed, the Sobolev embedding inequality and interpolation inequality yields that for any small $\lambda>0,$
\begin{align}\label{U-0}
	\|U\|_{L^\infty_{txy}}~\lesssim~\|U\|_{L^\infty_{tx}L^2_y}^{\frac{1}{2}-\lambda}\cdot\|U\|_{L^\infty_{tx}H_y^1}^{\frac{1}{2}+\lambda}.
\end{align}
It also holds that by combining with \eqref{est_1}, 
\begin{align}\label{U-1}
	\|U\|_{L^\infty_{tx}L^2_y}~\lesssim~\|U\|_{L^\infty_{t}L^2_{xy}}^{\frac{1}{2}-\lambda}\cdot\|U\|_{L^\infty_{t}H_x^1L_y^2}^{\frac{1}{2}+\lambda}~\lesssim~\epsilon^{-\frac{1}{2}(\frac{1}{2}+\lambda)}.
\end{align}
Similarly, one has by virtue of \eqref{est_2} that
\begin{align}\label{U-2}
\begin{aligned}
	\|U\|_{L^\infty_{tx}H^1_y}~\lesssim~&\|U\|_{L_t^2L^\infty_{x}H^1_{y}}^{\frac{1}{2}-\lambda}\cdot\|U\|_{H^1_{t}L_x^\infty H_y^1}^{\frac{1}{2}+\lambda}\\
	\lesssim~&\|U\|_{L_{tx}^2H_y^1}^{(\frac{1}{2}-\lambda)^2}\|U\|_{L_{t}^2H_x^1H_y^1}^{(\frac{1}{2}-\lambda)(\frac{1}{2}+\lambda)}\cdot\|U\|_{H_t^1L_{x}^2H_y^1}^{(\frac{1}{2}+\lambda)(\frac{1}{2}-\lambda)}\|U\|_{H_{t}^1H_x^1H^1_{y}}^{(\frac{1}{2}+\lambda)^2}\\
	\lesssim~&\epsilon^{-\frac{1}{2}(\frac{1}{2}-\lambda)^2-(\frac{1}{2}-\lambda)(\frac{1}{2}+\lambda)-(\frac{1}{2}-\lambda)(\frac{1}{2}+\lambda)-\frac{3}{2}(\frac{1}{2}+\lambda)^2}\\
	\lesssim~&\epsilon^{-1-\lambda}.
\end{aligned}\end{align}
Substituting \eqref{U-1} and \eqref{U-2} into \eqref{U-0}, we have
\begin{align}\label{U_infty}
	\|U\|_{L^\infty_{txy}}~\lesssim~\epsilon^{-\frac{1}{2}(\frac{1}{2}+\lambda)(\frac{1}{2}-\lambda)}\cdot\epsilon^{-(1+\lambda)(\frac{1}{2}+\lambda)}~\leq~C\epsilon^{-\frac{5}{8}-\frac{3\lambda}{2}-\frac{\lambda^2}{2}},
\end{align}
which along with \eqref{equ_norm} implies that
\begin{align}\label{uh_infty}
	\|(\mathbf{u}, \mathbf{H})\|_{L^\infty_{txy}}~\leq~C\epsilon^{-\frac{5}{8}-\frac{3\lambda}{2}-\frac{\lambda^2}{2}}.
\end{align}
Therefore, applying \eqref{L-inf-a} and \eqref{uh_infty} in \eqref{Def_err} yields
\begin{align}\label{con}
\begin{aligned}
	&
	\Big\|(\mathbf{u^{\epsilon}}, \mathbf{H^{\epsilon}})(t,x,y)-(\mathbf{u^0}, \mathbf{H^0})(t,x,y)-\big(u^0_b, \sqrt{\epsilon}v^0_b, h^0_b, \sqrt{\epsilon}g^0_b\big)\big(t,x,\frac{y}{\sqrt{\epsilon}}\big)\Big\|_{L^\infty_{txy}}\\
	\leq~&C\sqrt{\epsilon}+C\epsilon\|(\mathbf{u}, \mathbf{H})\|_{L^\infty_{txy}}~\leq~C\sqrt{\epsilon}+C\epsilon^{\frac{3}{8}-\frac{3\lambda}{2}-\frac{\lambda^2}{2}}~\leq~C\epsilon^{\frac{3}{8}-\frac{3\lambda}{2}-\frac{\lambda^2}{2}},
\end{aligned}\end{align}
provided that $\lambda$ is small enough. This ends the proof of Theorem \ref{MAIN THM}.
\end{proof}

\begin{rem}
	From the above proof, we believe the decay rate in \eqref{con} with respect to $\epsilon$ can be improved to order $\sqrt{\epsilon}$. For this purpose,  we need to construct more accurate approximate solutions to the problem \eqref{1.1}-\eqref{BCM}, such that the corresponding remainder terms in \eqref{EA} are of order $\epsilon^\gamma, \gamma>\frac{9}{8}$. Of course, more regularity requirement on the initial data of \eqref{1.1}-\eqref{BCM} is needed.
\end{rem}

\appendix
\section{Expression of error caused by the approximation and its estimates}

Now, we will give the expressions of the remainders $R_i (i=1\sim4)$ in \eqref{EA}, which are generated by the approximate solution $(\mathbf{u^a}, \mathbf{H^a}, p^a)$ in \eqref{NAP},  then prove Proposition \ref{PROP1.5}.
For the simplicity of notations, denote by
\begin{align*}
\begin{aligned}
		&\tau_u(t,x,y) =\chi'(y)\int_0^{\frac{y}{\sqrt{\epsilon}}} u_b^1(t,x,\tilde{\eta})d\tilde{\eta},\quad\tau_h(t,x,y)=\chi'(y)\int_0^{\frac{y}{\sqrt{\epsilon}}} h_b^1(t,x,\tilde{\eta})d\tilde{\eta}+\rho\big(t,x,\frac{y}{\sqrt{\epsilon}}\big),\\
		&\tau_g(t,x,y)=-\int_0^{\frac{y}{\sqrt{\epsilon}}}\p_x\rho(t,x,\tilde{\eta})d\tilde{\eta}),
\end{aligned}\end{align*}
and 
\begin{align*}
\begin{aligned}
\left\{
\begin{array}{ll}
\widetilde{u_b^1}(t,x,y)&=\chi(y)u^1_b\big(t,x,\frac{y}{\sqrt{\epsilon}}\big)+\sqrt{\epsilon}\chi'(y)\int_0^{\frac{y}{\sqrt{\epsilon}}} u_b^1(t,x,\tilde{\eta})d\tilde{\eta},\\
\widetilde{v_b^1}(t,x,y)&=\chi(y)v^1_b\big(t,x,\frac{y}{\sqrt{\epsilon}}\big),\\
\widetilde{h_b^1}(t,x,y)&=\chi(y)h^1_b\big(t,x,\frac{y}{\sqrt{\epsilon}}\big)+\sqrt{\epsilon}\chi'(y)\int_0^{\frac{y}{\sqrt{\epsilon}}} h_b^1(t,x,\tilde{\eta})d\tilde{\eta}+\sqrt{\epsilon}\rho\big(t,x,\frac{y}{\sqrt{\epsilon}}\big),\\
\widetilde{g_b^1}(t,x,y)&=\chi(y)g^1_b(t,x,\eta)-\sqrt{\epsilon}\int_0^{\frac{y}{\sqrt{\epsilon}}}\p_x\rho(t,x,\tilde{\eta})d\tilde{\eta}).
\end{array}\right.
\end{aligned}\end{align*}
Then, the approximation \eqref{NAP} can be rewritten as follows:
\begin{align*}
	\begin{cases}
		(\mathbf{u^{a}},\mathbf{H^{a}})(t,x,y)=(\mathbf{u^0},\mathbf{H^0})(t,x,y)+\big(u^0_b, \sqrt{\epsilon} v_b^0, h^0_b, \sqrt{\epsilon} g_b^0\big)\big(t,x,\frac{y}{\sqrt{\epsilon}}\big)\\
\hspace{1.1in}+\sqrt{\epsilon}\Big[(\mathbf{u^1},\mathbf{H^1})(t,x,y)+\big(\widetilde{u^1_b}, \sqrt{\epsilon} \widetilde{v_b^1}, \widetilde{h^1_b}, \sqrt{\epsilon} \widetilde{g_b^1}\big)(t,x,y)\Big],\\
\hspace{.36in} p^{a}(t,x,y)=p^0(t,x,y)+\sqrt{\epsilon}p^1(t,x,y)+\epsilon p^1_b(t,x,\frac{y}{\sqrt{\epsilon}}).
	\end{cases}
\end{align*}
Moreover, from Proposition \ref{PROP1.4} and the estimate \eqref{rho} of $\rho$ with $m$ large enough, we know that there is a positive constant $C$ independent of $\epsilon$, such that for $|\alpha|\leq5, 0\leq i\leq2$ and $t\in[0,T_4]$,
\begin{align}\label{tau}
	\epsilon^{\frac{i}{2}}\big\|\partial_y^i\partial_{tx}^\alpha (\tau_u, \tau_h, \tau_g)(t,\cdot)\big\|_{L^2}+\big\|(y\partial_y)^i\partial_{tx}^\alpha (\tau_u, \tau_h, \tau_g)(t,\cdot)\big\|_{L^2}\leq C,~
\end{align}
and 
\begin{align}\label{tu}
	\epsilon^{\frac{i}{2}}\big\|\partial_y^i\partial_{tx}^\alpha \big(\widetilde{u_b^1}, \widetilde{v_b^1}, \widetilde{h_b^1}, \widetilde{g_b^1} \big)(t,\cdot)\big\|_{L^2}+\big\|(y\partial_y)^i\partial_{tx}^\alpha \big(\widetilde{u_b^1}, \widetilde{v_b^1}, \widetilde{h_b^1}, \widetilde{g_b^1} \big)(t,\cdot)\big\|_{L^2}\leq C.
\end{align}

Next, we find the remainder terms $R_i, i=1\sim4$ in \eqref{EA} can be divided as follows.
\begin{align}\label{def_R}
R_i=R_i^0+\sqrt{\epsilon}\chi R_i^1+R_i^{C}+\epsilon R_i^{H},\quad i=1, 3,
\end{align}
and 
\begin{align}\label{def_R2}
R_i=R_i^0+\sqrt{\epsilon}\chi R_i^1+\epsilon R_i^{H},\quad i=2,  4.
\end{align}
Each term in the above equalities can be expressed explicitly as follows.
Firstly, $R_i^0,  i=1\sim4$ consist of terms including the leading order profiles $(u_b^0, v_b^0, h_b^0, g_b^0)$:
\begin{align}\label{def-R0}
\begin{aligned}
R_1^0=&\big(u^0-\overline{u^0}-y\overline{\p_y u^0}\big)\p_x u_b^0+\Big[v^0-y\overline{\p_y v^0}-\frac{y^2}{2}\overline{\p^2_{y}v^0}+\sqrt{\epsilon}\big(v^1-\overline{v^1}-y\overline{\p_y v^1}\big)\Big]\p_y u_b^0\\
&-\big(h^0-\overline{h^0}-y\overline{\p_y h^0}\big)\p_x h_b^0-\Big[g^0-y\overline{\p_y g^0}-\frac{y^2}{2}\overline{\p^2_y g^0}+\sqrt{\epsilon}\big(g^1-\overline{g^1}-y\overline{\p_y g^1}\big)\Big]\p_y h_b^0\\
&+\big(\p_x u^0-\overline{\p_x u^0}-y\overline{\p_{xy}^2u^0}\big)u_b^0+\sqrt{\epsilon}\big(\p_y u^0-\overline{\p_y u^0}\big)v_b^0\\
&-\big(\p_x h^0-\overline{\p_x h^0}-y\overline{\p_{xy}^2h^0}\big)h_b^0-\sqrt{\epsilon} \big(\p_y h^0-\overline{\p_y h^0}\big)g_b^0,\\
R_3^0=&\big(u^0-\overline{u^0}-y\overline{\p_y u^0}\big)\p_x h_b^0+\Big[v^0-y\overline{\p_y v^0}-\frac{y^2}{2}\overline{\p^2_{y}v^0}+\sqrt{\epsilon}\big(v^1-\overline{v^1}-y\overline{\p_y v^1}\big)\Big]\p_y h_b^0\\
&-\big(h^0-\overline{h^0}-y\overline{\p_y h^0}\big)\p_x u_b^0-\Big[g^0-y\overline{\p_y g^0}-\frac{y^2}{2}\overline{\p^2_y g^0}+\sqrt{\epsilon}\big(g^1-\overline{g^1}-y\overline{\p_y g^1}\big)\Big]\p_y u_b^0\\
&+\big(\p_x h^0-\overline{\p_x h^0}-y\overline{\p_{xy}^2h^0}\big)u_b^0+\sqrt{\epsilon}\big(\p_y h^0-\overline{\p_y h^0}\big)v_b^0\\
&-\big(\p_x u^0-\overline{\p_x u^0}-y\overline{\p_{xy}^2u^0}\big)h_b^0-\sqrt{\epsilon}\big(\p_y u^0-\overline{\p_y u^0}\big)g_b^0,
\end{aligned}\end{align}
and
\begin{align*}
R_2^{0}=&\sqrt{\epsilon}\big(u^0-\overline{u^0}\big)\partial_x v_b^0+\sqrt{\epsilon}\Big[v^0-y\overline{\partial_y v^0}+\sqrt{\epsilon}\big(v^1-\overline{v^1}\big)\Big]\partial_y v_b^0-\sqrt{\epsilon}\big(h^0-\overline{h^0}\big)\partial_x g_b^0\\
&-\sqrt{\epsilon}\Big[g^0-y\overline{\partial_y g^0}+\sqrt{\epsilon}\big(g^1-\overline{g^1}\big)\Big]\partial_y g_b^0+\Big[\partial_x v^0-y\overline{\partial_{xy}^2 v^0}+\sqrt{\epsilon}\big(\partial_x v^1-\overline{\partial_x v^1}\big)\Big]u_b^0\\
&+\sqrt{\epsilon}\big(\partial_y v^0-\overline{\partial_y v^0} \big)v_b^0-\Big[\partial_x g^0-y\overline{\partial_{xy}^2 g^0}+\sqrt{\epsilon}\big(\partial_x g^1-\overline{\partial_x g^1}\big)\Big]h_b^0-\sqrt{\epsilon}\big(\partial_y g^0-\overline{\partial_y g^0} \big)g_b^0,\\
R_4^{0}=&\sqrt{\epsilon}\big(u^0-\overline{u^0}\big)\partial_x g_b^0+\sqrt{\epsilon}\Big[v^0-y\overline{\partial_y v^0}+\sqrt{\epsilon}\big(v^1-\overline{v^1}\big)\Big]\partial_y g_b^0-\sqrt{\epsilon}\big(h^0-\overline{h^0}\big)\partial_x v_b^0\\
&-\sqrt{\epsilon}\Big[g^0-y\overline{\partial_y g^0}+\sqrt{\epsilon}\big(g^1-\overline{g^1}\big)\Big]\partial_y v_b^0+\Big[\partial_x g^0-y\overline{\partial_{xy}^2 g^0}+\sqrt{\epsilon}\big(\partial_x g^1-\overline{\partial_x g^1}\big)\Big]u_b^0\\
&+\sqrt{\epsilon}\big(\partial_y g^0-\overline{\partial_y g^0} \big)v_b^0
-\Big[\partial_x v^0-y\overline{\partial_{xy}^2 v^0}+\sqrt{\epsilon}\big(\partial_x v^1-\overline{\partial_x v^1}\big)\Big]h_b^0-\sqrt{\epsilon}\big(\partial_y v^0-\overline{\partial_y v^0} \big)g_b^0.
\end{align*}

Secondly, $R_i^1,  i=1\sim4$ are composed of terms related to the first order profiles $(u_b^1, v_b^1, h_b^1, g_b^1)$:
\begin{align}\label{def-R1}
\begin{aligned}
	R_1^1=&\big(u^0-\overline{u^0} \big)\partial_x u_b^1+\Big[v^0-y\overline{\partial_y v^0}+\sqrt{\epsilon}\big(v^1-\overline{v^1}\big)\Big]\partial_y u_b^1-\big(h^0-\overline{h^0} \big)\partial_x h_b^1\\
	&-\Big[g^0-y\overline{\partial_y g^0}+\sqrt{\epsilon}\big(g^1-\overline{g^1}\big)\Big]\partial_y h_b^1+\big(\partial_x u^0-\overline{\partial_x u^0} \big)u_b^1-\big(\partial_x h^0-\overline{\partial_x h^0} \big)h_b^1,\\
R_3^1=&\big(u^0-\overline{u^0} \big)\partial_x h_b^1+\Big[v^0-y\overline{\partial_y v^0}+\sqrt{\epsilon}\big(v^1-\overline{v^1}\big)\big]\partial_y h_b^1-\big(h^0-\overline{h^0} \big)\partial_x u_b^1\\
&-\Big[g^0-y\overline{\partial_y g^0}+\sqrt{\epsilon}\big(g^1-\overline{g^1}\big)\Big]\partial_y u_b^1+\big(\partial_x h^0-\overline{\partial_x h^0} \big)u_b^1-\big(\partial_x u^0-\overline{\partial_x u^0} \big)h_b^1, 	
\end{aligned}\end{align}
and
\begin{align*}
R_2^1=&\sqrt{\epsilon}v^0\partial_y v_b^1-\sqrt{\epsilon}g^0\partial_y g_b^1 +\partial_x v^0 u_b^1-\partial_x g^0 h_b^1,\\
	R_4^1=&\sqrt{\epsilon}v^0\partial_y g_b^1-\sqrt{\epsilon}g^0\partial_y v_b^1 +\partial_x g^0 u_b^1-\partial_x v^0 h_b^1.
\end{align*}

Thirdly, $R_i^{C} (i=1,3)$ in \eqref{def_R} 
are caused by the cut-off function $\chi(y)$ and listed as follows.
\begin{align}\label{CC1}
\begin{aligned}
R_1^{C}=&(1-\chi)\Big[\big(y\overline{\partial_y u^0}+\sqrt{\epsilon}\overline{u^1}\big)\partial_x u_b^0+\big(\frac{y^2}{2}\overline{\partial_y^2v^0}+\sqrt{\epsilon}y\overline{\partial_y v^1}\big)\partial_y u_b^0+\big(y\overline{\partial_{xy}^2u^0}+\sqrt{\epsilon}\overline{\partial_x u^1}\big) u_b^0+\sqrt{\epsilon}\overline{\partial_y u^0}~v_b^0\\
&\qquad\quad-\big(y\overline{\partial_y h^0}+\sqrt{\epsilon}\overline{h^1}\big)\partial_x h_b^0-\big(\frac{y^2}{2}\overline{\partial_y^2g^0}+\sqrt{\epsilon}y\overline{\partial_y g^1}\big)\partial_y h_b^0-\big(y\overline{\partial_{xy}^2h^0}+\sqrt{\epsilon}\overline{\partial_x h^1}\big) h_b^0-\sqrt{\epsilon}\overline{\partial_y h^0}~g_b^0\Big]\\
&+\sqrt{\epsilon}v^0\big(\chi'u_b^1+\sqrt{\epsilon} \partial_y \tau_u\big)-\sqrt{\epsilon}g^0\big(\chi' h_b^1+\sqrt{\epsilon}\partial_y \tau_h\big),\\
R_3^{C}=&(1-\chi)\Big[\big(y\overline{\partial_y u^0}+\sqrt{\epsilon}\overline{u^1}\big)\partial_x h_b^0+\big(\frac{y^2}{2}\overline{\partial_y^2v^0}+\sqrt{\epsilon}y\overline{\partial_y v^1}\big)\partial_y h_b^0+\big(y\overline{\partial_{xy}^2 h^0}+\sqrt{\epsilon}\overline{\partial_x h^1}\big) u_b^0+\sqrt{\epsilon}\overline{\partial_y h^0}~v_b^0\\
&\qquad\quad-\big(y\overline{\partial_y h^0}+\sqrt{\epsilon}\overline{h^1}\big)\partial_x u_b^0-\big(\frac{y^2}{2}\overline{\partial_y^2g^0}+\sqrt{\epsilon}y\overline{\partial_y g^1}\big)\partial_yu_b^0-\big(y\overline{\partial_{xy}^2u^0}+\sqrt{\epsilon}\overline{\partial_x u^1}\big) h_b^0-\sqrt{\epsilon}\overline{\partial_y u^0}~g_b^0\Big]\\
&+\sqrt{\epsilon}v^0\big(\chi'h_b^1+\sqrt{\epsilon}\partial_y \tau_h\big)-\sqrt{\epsilon}g^0\big(\chi' u_b^1+\sqrt{\epsilon}\partial_y \tau_u\big).
\end{aligned}\end{align}

Finally, $R_i^H,  i=1\sim4$ correspond to the high order terms of $R_i$ with respect to $\epsilon$:
\begin{align}\label{def-RH}
\begin{aligned}
	R_1^{H}=&\partial_x p_b^1+ \partial_t \tau_u+\big(u^1+\widetilde{u_b^1}\big)\partial_x\big(u^1+\widetilde{u_b^1}\big)+\partial_x\big[(u^0+u_b^0)\tau_u\big]+(v^1+v_b^0)\big(\partial_y u^1+\chi' u_b^1+\sqrt{\epsilon}\partial_y\tau_u \big)\\
	&+\widetilde{v_b^1}\partial_y\big(u^0+\sqrt{\epsilon}u^1+\sqrt{\epsilon}\widetilde{u_b^1}\big)-\big(h^1+\widetilde{h_b^1}\big)\partial_x\big(h^1+\widetilde{h_b^1}\big)-\partial_x\big[(h^0+h_b^0)\tau_h\big]\\
	&-(g^1+g_b^0)\big(\partial_y h^1+\chi' h_b^1+\sqrt{\epsilon}\partial_y\tau_h \big)-\widetilde{g_b^1}\partial_y\big(h^0+\sqrt{\epsilon}h^1+\sqrt{\epsilon}\widetilde{h_b^1}\big)-\sqrt{\epsilon}\tau_g \partial_y h_b^0\\
	&-\mu\Big[\triangle \big(u^0+\sqrt{\epsilon} u^1\big)+\partial_x^2\big( u_b^0+\sqrt{\epsilon} \widetilde{u_b^1}\big)+2\sqrt{\epsilon}\chi'\partial_y u_b^1+\sqrt{\epsilon}\chi'' u_b^1+\epsilon\partial_y^2\tau_u\Big],\\
	R_3^{H}=& \partial_t \tau_h+\big(u^1+\widetilde{u_b^1}\big)\partial_x\big(h^1+\widetilde{h_b^1}\big)+(u^0+u_b^0)\partial_x\tau_h+\tau_u\partial_x(h^0+h_b^0)+(v^1+v_b^0)\big(\partial_y h^1+\chi' h_b^1+\sqrt{\epsilon}\partial_y\tau_h \big)\\
	&+\widetilde{v_b^1}\partial_y\big(h^0+\sqrt{\epsilon}h^1+\sqrt{\epsilon}\widetilde{h_b^1}\big)-\big(h^1+\widetilde{h_b^1}\big)\partial_x\big(u^1+\widetilde{u_b^1}\big)-(h^0+h_b^0)\partial_x\tau_u-\tau_h\partial_x(u^0+u_b^0)\\
	&-(g^1+g_b^0)\big(\partial_y u^1+\chi' u_b^1+\sqrt{\epsilon}\partial_y\tau_u \big)-\widetilde{g_b^1}\partial_y\big(u^0+\sqrt{\epsilon}u^1+\sqrt{\epsilon}\widetilde{u_b^1}\big)-\sqrt{\epsilon}\tau_g \partial_y u_b^0\\
	&-\kappa\Big[\triangle\big( h^0+\sqrt{\epsilon} h^1\big)+\partial_x^2\big( h_b^0+\sqrt{\epsilon}\widetilde{h_b^1}\big)+2\sqrt{\epsilon}\chi'\partial_y h_b^1+\sqrt{\epsilon}\chi'' h_b^1+\epsilon\partial_y^2\tau_h\Big],
	\end{aligned}\end{align}
	and
	\begin{align*}
		R_2^H=&\partial_t\widetilde{v_b^1}+\big(u^1+\widetilde{u_b^1} \big)\partial_x(v^1+v_b^0)+(v^1+v_b^0)\partial_y\big(v^1+\sqrt{\epsilon}\widetilde{v_b^1}\big)+u^a\partial_x\widetilde{v_b^1}+\widetilde{v_b^1}\partial_y v^a+\partial_x v^0 \tau_u+\chi' v^0 v_b^1\\
	&-\big(h^1+\widetilde{h_b^1} \big)\partial_x(g^1+g_b^0)-(g^1+g_b^0)\partial_y\big(g^1+\sqrt{\epsilon}\widetilde{g_b^1}\big)-h^a\partial_x\widetilde{g_b^1}-\widetilde{g_b^1}\partial_y g^a-\partial_x g^0 \tau_h-g^0\big(\chi' g_b^1+\sqrt{\epsilon}\partial_y\tau_g\big)\\
	&-\mu\big[\triangle\big(v^0+\sqrt{\epsilon}v^1+\epsilon\widetilde{v_b^1} \big)+\sqrt{\epsilon}\partial_x^2v_0^b \big],\\
	R_4^H=&\partial_t\widetilde{g_b^1}+\big(u^1+\widetilde{u_b^1} \big)\partial_x(g^1+g_b^0)+(v^1+v_b^0)\partial_y\big(h^1+\sqrt{\epsilon}\widetilde{g_b^1}\big)+u^a\partial_x\widetilde{g_b^1}+\widetilde{v_b^1}\partial_y g^a+\partial_x g^0 \tau_u+v^0\big(\chi' g_b^1+\sqrt{\epsilon}\partial_y\tau_g\big)\\
	&-\big(h^1+\widetilde{h_b^1} \big)\partial_x(v^1+v_b^0)-(g^1+g_b^0)\partial_y\big(v^1+\sqrt{\epsilon}\widetilde{v_b^1}\big)-h^a\partial_x\widetilde{v_b^1}-\widetilde{g_b^1}\partial_y v^a-\partial_x v^0 \tau_h-\chi' g^0v_b^1\\
	 &-\kappa\big[\triangle\big(g^0+\sqrt{\epsilon}g^1+\epsilon\widetilde{g_b^1} \big)+\sqrt{\epsilon}\partial_x^2g_0^b \big].
\end{align*}

Based on the above exact expressions for the error terms $R_i(i=1\sim4)$, taking into account the estimates of $(\mathbf{u^i}, \mathbf{H^i}) (i=0,1)$ in Propositions \ref{PROP1.1} and \ref{PROP1.3} respectively, and the estimates of $(u^j_b,h^j_b) (j=0,1)$ in Propositions \ref{PROP2} and \ref{PROP1.4} respectively, and the estimate \eqref{est_p} of $p_b^1$, we are able to prove Proposition \ref{PROP1.5}.

\begin{proof}[\textbf{Proof of Proposition \ref{PROP1.5}.}]
We only show the $L^2$-estimate of $R_1$ in (\ref{ERED}). The estimates for other $R_i, i=2,3,4$ 
can be estimated similarly. Moreover, If one applies the tangential derivatives operators $\p^\alpha_{tx}, |\alpha|\leq 3$ on the error terms $R_i, 1\leq i\leq4$, it does not produce any singular factor $\displaystyle \frac{1}{\sqrt{\epsilon}}$ in the formulation. Consequently, we can prove (\ref{ERED}) for $|\alpha|\leq 3$.
\bigskip

From \eqref{def_R}, the $L^2-$estimate of $R_1$ will be divided into three parts.\\
{\bf Part I:} Estimates of $R_1^0$ and $\sqrt{\epsilon}\chi R_1^1$.\\
By Taylor expansion, it follows that for $\displaystyle \eta=\frac{y}{\sqrt{\epsilon}}$ and some $\theta_y, \tilde\theta_y, \hat\theta_y \in[0,y]$,
\begin{align*}
\big(u^0-\overline{u^0}-y\overline{\p_y u^0}\big)\p_x u_b^0=\frac{y^2}{2}\p_y^2u^0(t,x,\theta_y)\cdot\p_x u_b^0\big(t,x,\frac{y}{\sqrt{\epsilon}}\big)=\frac{\epsilon}{2}\p_y^2u^0(t,x,\theta_y)\cdot\eta^2\p_x u_b^0(t,x,\eta),
\end{align*}
 and combining with the boundary condition $\overline{v^0}(t,x)=v^0|_{y=0}=0,$
\begin{align*}
	\Big[v^0-y\overline{\p_y v^0}-\frac{y^2}{2}\overline{\p^2_{y}v^0}+\sqrt{\epsilon}\big(v^1-\overline{v^1}-y\overline{\p_y v^1}\big)\Big]\p_y u_b^0
	=&\Big[\frac{y^3}{6}\p_y^3v^0(t,x,\tilde {\theta}_y)+\sqrt{\epsilon} \frac{y^2}{2}\p_y^2 v^1(t,x,\hat {\theta}_y)\Big]\p_y u_b^0\big(t,x,\frac{y}{\sqrt{\epsilon}}\big)\\
	=&\epsilon\Big[\frac{\p_y^3v^0(t,x,\tilde {\theta}_y)}{6}\eta^3+ \frac{\p_y^2 v^1(t,x,\hat {\theta}_y)}{2}\eta^2\Big]\p_\eta u_b^0(t,x,\eta).
\end{align*}
  Then from Propositions \ref{PROP1.1} and \ref{PROP2},
\begin{align*}
\begin{aligned}
\int_{\mathbb{T}}\int_{\mathbb{R}_+}\Big[\big(u^0-\overline{u^0}-y\overline{\p_y u^0}\big)\p_x u_b^0\Big]^2dydx
=~&\epsilon^{5/2}\int_{\mathbb{T}}\int_{\mathbb{R}_+}\Big[\frac{\p_y^2u^0(t,x,\theta_y)}{2}\eta^2\p_xu_b^0(t,x,\eta)\Big]^2d\eta dx\\
\leq~& \frac{\epsilon^{5/2}}{2}\|\p_y^2u^0(t,\cdot)\|^2_{L^\infty(\mathbb{T}\times\mathbb{R}_+)}\|\partial_x u_b^0\|^2_{L^2_2(\Omega)}\\ 
\leq& C\epsilon^{5/2},
\end{aligned}\end{align*}
which implies
\begin{align*}
\Big\|\Big[\big(u^0-\overline{u^0}-y\overline{\p_y u^0}\big)\p_x u_b^0\Big](t,\cdot)\Big\|_{L^2}~\leq ~C\epsilon^{5/4}.
\end{align*}
Similarly, we have
\begin{align*}
\Big\|\Big[v^0-y\overline{\p_y v^0}-\frac{y^2}{2}\overline{\p^2_{y}v^0}+\sqrt{\epsilon}\big(v^1-\overline{v^1}-y\overline{\p_y v^1}\big)\Big]\p_y u_b^0(t,\cdot)\Big\|_{L^2)}~\leq ~C\epsilon^{5/4}.
\end{align*}
Other terms in $R_1^0$ and $R_1^1$ can be estimated in the same way by using the results in Propositions \ref{PROP1.1}, \ref{PROP2}, \ref{PROP1.3} and \ref{PROP1.4}. Consequently,
\begin{align*}
\big\|R_1^0(t,\cdot)\big\|_{L^2}+\sqrt{\epsilon}\big\|(\chi R_1^1)(t,\cdot)\big\|_{L^2} ~\leq~ C\epsilon^{5/4}.
\end{align*}

\indent\newline
{\bf Part II:} Estimates of $R_1^{C}$.\\
By the cut-off function $\chi(y)$ and Propositions \ref{PROP1.1} and \ref{PROP1.3}, it yields
\begin{align}\label{R1C-1}
\begin{aligned}
&\Big\|\Big[(1-\chi)\big(y\overline{\partial_y u^0}+\sqrt{\epsilon} \overline{u^1}\big)\partial_x u_b^0\Big](t,\cdot)\Big\|_{L^2}^2\\
=~&\epsilon^{\frac{3}{2}}\int_{\mathbb{T}}\int_{1/\sqrt{\epsilon}}^{\infty}\Big[\big(1-\chi(\sqrt{\epsilon}\eta)\big)\big(\eta\overline{\partial_y u^0}(t,x)+\overline{u^1}(t,x)\big)\partial_x u_b^0(t,x,\eta)\Big]^2d\eta dx\\
\leq~&2\epsilon^{\frac{3}{2}}\int_{\mathbb{T}}\int_{1/\sqrt{\epsilon}}^{\infty}(\sqrt{\epsilon}\eta)^{2l}\cdot\Big[\big(\eta\overline{\partial_y u^0}(t,x)+\overline{u^1}(t,x)\big)\partial_x u_b^0(t,x,\eta)\Big]^2d\eta dx\\
\leq~&4\epsilon^{\frac{3}{2}+l}\Big(\big\|\overline{\p_y u^0}\big\|_{L^\infty(\mathbb{T})}^2\|\partial_x u_b^0\|^2_{L_{1+l}^2(\Omega)}+\big\|\overline{u^1}\big\|_{L^\infty(\mathbb{T})}^2\|\partial_x u_b^0\|^2_{L_{l}^2(\Omega)}\Big)\\
\leq~&C\epsilon^{\frac{3}{2}+l}
\end{aligned}\end{align}
for any $l\geq0$.
The analogous argument yields that for any $l\geq0$,
\begin{align}
\begin{aligned}
&\Big\|\Big[(1-\chi)\big(\frac{y^2}{2}\overline{\partial_y^2v^0}+\sqrt{\epsilon}y\overline{\partial_y v^1}\big)\partial_y u_b^0\Big](t,\cdot)\Big\|_{L^2}^2\leq C\epsilon^{\frac{3}{2}+l}.
\end{aligned}\end{align}
Similarly, it holds by using Proposition \ref{PROP1.4} that
\begin{align}\label{R1C-2}
\begin{aligned}
\Big\|\big(\sqrt{\epsilon}\chi' v^0 u_b^1\big)(t,\cdot)\Big\|_{L^2(\mathbb{T}\times\mathbb{R}_+)}^2=~&\epsilon^{\frac{3}{2}}\int_{\mathbb{T}}\int_{1/\sqrt{\epsilon}}^{2/\sqrt{\epsilon}}\Big[\chi'(\sqrt{\epsilon}\eta)v^0(t,x,\sqrt{\epsilon}\eta)\cdot u_b^1(t,x,\eta)\Big]^2d\eta dx\\
\leq~&C\epsilon^{\frac{3}{2}+l}\|v^0\|^2_{L^\infty(\mathbb{T}\times\mathbb{R}_+)}\|u_b^1\|_{L^2_l(\Omega)}^2
\leq~C\epsilon^{\frac{3}{2}+l}
\end{aligned}\end{align}
for any $l\geq0.$ In addition, it follows that by the boundary condition $v^0|_{y=0}=0,$
\begin{align*}
	\epsilon v^0\partial_y\tau_u=\epsilon\partial_y v^0(t,x,\theta_y)y\cdot\partial_y\tau_u(t,x,y)
\end{align*}
for some $\theta_y\in[0,y]$, which along with
\eqref{tau} implies that
\begin{align}\label{R1C-3}
	\big\|\big(\epsilon v^0\partial_y\tau_u\big)(t,\cdot)\|_{L^2}\leq~&\epsilon\|\partial_y v^0(t,\cdot)\|_{L^\infty}\|(y\partial_y\tau_u)(t,\cdot)\|_{L^2}\leq~ C\epsilon.
\end{align}
Thus, combining \eqref{R1C-1}-\eqref{R1C-2} with $l\geq\frac{1}{2}$ and \eqref{R1C-3}, and noting that other terms of $R_1^C$ can be investigated similarly, one can obtain
\begin{align*}
	\big\|R_1^C\big\|_{L^2(\mathbb{T}\times\mathbb{R}_+)}~\leq~C\epsilon.
\end{align*}

{\bf Part III:} Estimates of $R_1^{H}$.\\
From Propositions \ref{PROP1.1}-\ref{PROP1.4}, and using \eqref{est_p}, \eqref{tau} and \eqref{tu}, it is easy to check that the $L^2$-norm of each term in $R_1^{H}$ is uniformly bounded with respect to $\epsilon$. As a consequence, 
\begin{align*}
	\big\|\epsilon R_1^H(t,\cdot)\big\|_{L^2(\mathbb{T}\times\mathbb{R}_+)}~\leq~C\epsilon.
\end{align*}

Finally, combining all estimates in Parts I-III reads
\begin{align*}
\big\|R_1(t,\cdot)\big\|_{L^2}~\leq~ C\epsilon.
\end{align*}

\end{proof}

\section{ Proof of Lemma \ref{lem_equ}}
In this part, we give the proof of Lemma \ref{lem_equ} to show the domination of the newly defined functions $(\tilde u,\tilde v, \tilde h, \tilde g)$, given by \eqref{trans}, over the original unknown $(\mathbf{u}, \mathbf{H})$  in $L^p (1<p\leq\infty)$ norm. 
\begin{proof}[Proof of Lemma \ref{lem_equ}]
	Combining \eqref{def_psi} with \eqref{trans}, it follows
	\begin{align}\label{trans1}
		\frac{\tilde h(t,x,y)}{h^p\big(t,x,\frac{y}{\sqrt{\epsilon}}\big)}~=~\partial_y\Big(\frac{\psi(t,x,y)}{h^p\big(t,x,\frac{y}{\sqrt{\epsilon}}\big)}\Big),\quad \psi(t,x,y)~=~h^p\big(t,x,\frac{y}{\sqrt{\epsilon}}\big)\cdot\partial_y^{-1}\Big(\frac{\tilde h(t,x,y)}{h^p\big(t,x,\frac{y}{\sqrt{\epsilon}}\big)}\Big),
	\end{align}
	and then, the Hardy inequality yields that by the upper-lower bound of $h^p(t,x,\eta)$ given in Proposition \ref{PROP2},
	\begin{align}\label{est_psi}
		\Big\|\frac{\psi(t,x,y)}{y}\Big\|_{L^p}~\lesssim~\Big\|\frac{1}{y}\partial_y^{-1}\Big(\frac{\tilde h(t,x,y)}{h^p\big(t,x,\frac{y}{\sqrt{\epsilon}}\big)}\Big)\Big\|_{L^p}~\lesssim~\Big\|\frac{\tilde h(t,x,y)}{h^p\big(t,x,\frac{y}{\sqrt{\epsilon}}\big)}\Big\|_{L^p}~\leq~C\|\tilde h(t,\cdot)\|_{L^p},\quad 1<p\leq\infty.
	\end{align}
By a direct calculation,
\begin{align*}
\partial_{tx}^\alpha \psi(t,x,y)=\sum_{\beta\leq\alpha}C_\alpha^\beta\Big\{\partial_{tx}^{\alpha-\beta}h^p\big(t,x,\frac{y}{\sqrt{\epsilon}}\big)\cdot\partial_y^{-1}\partial_{tx}^\beta\Big(\frac{\tilde h(t,x,y)}{h^p\big(t,x,\frac{y}{\sqrt{\epsilon}}\big)}\Big) \Big\},
\end{align*}
similarly, 
it implies that for $|\alpha|\leq2$ and $1\leq p\leq\infty,$
\begin{align}\label{est_psi-x}
\begin{aligned}
\displaystyle
\Big\|\frac{1}{y}\partial_{tx}^\alpha \psi(t,x,y)\Big\|_{L^p}\leq&\sum_{\beta\leq\alpha}C_\alpha^\beta\Big\{\big\|\partial_{tx}^{\alpha-\beta}h^p\big(t,x,\frac{y}{\sqrt{\epsilon}}\big)\big\|_{L^\infty}\cdot\Big\|\frac{1}{y}\partial_y^{-1}\partial_{tx}^\beta\Big(\frac{\tilde h(t,x,y)}{h^p\big(t,x,\frac{y}{\sqrt{\epsilon}}\big)}\Big)\Big\|_{L^p} \Big\}\\
\lesssim&\sum_{\beta\leq\alpha}\Big\|\partial_{tx}^\beta\Big(\frac{\tilde h(t,x,y)}{h^p\big(t,x,\frac{y}{\sqrt{\epsilon}}\big)}\Big)\Big\|_{L^p}\leq C\sum_{\beta\leq\alpha}\big\|\partial_{tx}^\beta \tilde h(t,\cdot)\big\|_{L^p}.
\end{aligned}\end{align}

	Next, by the transformation \eqref{trans},
	\begin{align*}
	\begin{aligned}
		&u(t,x,y)=\tilde u(t,x,y)+a^p\big(t,x,\frac{y}{\sqrt{\epsilon}}\big)\cdot\tilde h(t,x,y)+\big(\partial_y a^p+a^pb^p\big)\big(t,x,\frac{y}{\sqrt{\epsilon}}\big)\cdot\psi(t,x,y),\\
		&v(t,x,y)=\tilde v(t,x,y)+a^p\big(t,x,\frac{y}{\sqrt{\epsilon}}\big)\cdot\tilde g(t,x,y)-\partial_x a^p\big(t,x,\frac{y}{\sqrt{\epsilon}}\big)\cdot\psi(t,x,y),\\
				&h(t,x,y)=\tilde h(t,x,y)+b^p\big(t,x,\frac{y}{\sqrt{\epsilon}}\big)\cdot\psi(t,x,y),	\quad g(t,x,y)=\tilde g(t,x,y).	
		\end{aligned}\end{align*}
		It yields that by using \eqref{est_psi}, 		\begin{align*}
			\|u(t,\cdot)\|_{L^p}\leq &\|\tilde u(t,\cdot)\|_{L^p}+\|a^p(t,\cdot)\|_{L^\infty}\|\tilde h(t,\cdot)\|_{L^p}+\Big(\|y\partial_ya^p(t,\cdot)\|_{L^\infty}+\|y(a^pb^p)(t,\cdot)\|_{L^\infty}\Big)\Big\|\frac{1}{y}\psi(t,\cdot)\Big\|_{L^p}\\
       \leq&\|\tilde u(t,\cdot)\|_{L^p}+C\|\tilde h(t,\cdot)\|_{L^p},
		\end{align*}
		and similarly,
		\begin{align*}
			\|v(t,\cdot)\|_{L^p}\leq \|\tilde v(t,\cdot)\|_{L^p}+C\|\tilde g(t,\cdot)\|_{L^p}+C\|\tilde h(t,\cdot)\|_{L^p},\quad \|h(t,\cdot)\|_{L^p}\leq C\|\tilde h(t,\cdot)\|_{L^p}.
		\end{align*}
Furthermore, one has 
\begin{align*}
	\partial_{tx}^\alpha u(t,x,y)=\partial_{tx}^\alpha\tilde u(t,x,y)+\sum_{\beta\leq\alpha}C_{\alpha}^{\beta}\Big\{&\partial_{tx}^{\alpha-\beta} a^p\big(t,x,\frac{y}{\sqrt{\epsilon}}\big)\cdot\partial_{tx}^\beta\tilde h(t,x,y)\nonumber\\
	&+\partial_{tx}^{\alpha-\beta}\big(\partial_y a^p+a^pb^p\big)\big(t,x,\frac{y}{\sqrt{\epsilon}}\big)\cdot\partial_{tx}^\beta\psi(t,x,y)\Big\},
\end{align*}
along with \eqref{est_psi-x} and the boundedness of $a^p$, it follows that for $|\alpha|\leq2$ and $1\leq p\leq\infty,$
\begin{align*}
	\big\|\partial_{tx}^\alpha u(t,\cdot)\big\|_{L^p}\leq~&\big\|\partial_{tx}^\alpha\tilde u(t,\cdot)\big\|_{L^p}+\sum_{\beta\leq\alpha}C_{\alpha}^{\beta}\Big\{\big\|\partial_{tx}^{\alpha-\beta} a^p\big(t,x,\frac{y}{\sqrt{\epsilon}}\big)\big\|_{L^\infty}\cdot\big\|\partial_{tx}^\beta\tilde h(t,\cdot)\|_{L^p}\\
	&\quad+\big(\big\|y\partial_y\partial_{tx}^{\alpha-\beta} a^p\big(t,x,\frac{y}{\sqrt{\epsilon}}\big)\big\|_{L^\infty}+\big\|y\partial_{tx}^{\alpha-\beta} (a^pb^p)\big(t,x,\frac{y}{\sqrt{\epsilon}}\big)\big\|_{L^\infty}\big)\cdot\Big\|\frac{1}{y}\partial_{tx}^\beta\psi(t,x,y)\Big\|_{L^p}\Big\}\\
	\leq~& \big\|\partial_{tx}^\alpha\tilde u(t,\cdot)\big\|_{L^p}+C\sum_{\beta\leq\alpha}\big\|\partial_{tx}^\beta\tilde h(t,\cdot)\|_{L^p}.
\end{align*}
Likewise, one can obtain that
\begin{align*}
	\big\|\partial_{tx}^\alpha v(t,\cdot)\big\|_{L^p}
	&\leq \big\|\partial_{tx}^\alpha\tilde v(t,\cdot)\big\|_{L^p}+C\sum_{\beta\leq\alpha}\Big(\big\|\partial_{tx}^\beta \tilde g(t,\cdot)\|_{L^p}+\big\|\partial_{tx}^\beta\tilde  h(t,\cdot)\|_{L^p}\Big),~
	\end{align*}
	and
	\begin{align*}
	\big\|\partial_{tx}^\alpha h(t,\cdot)\big\|_{L^p}
	\leq C\sum_{\beta\leq\alpha}
	\big\|\partial_{tx}^\beta\tilde h(t,\cdot)\|_{L^p}.
\end{align*}
As $g=\tilde g$, it is nature to get
\begin{align*}
	\big\|\partial_{tx}^\alpha g(t,\cdot)\|_{L^p}~=~\big\|\partial_{tx}^\alpha\tilde g(t,\cdot)\|_{L^p}.
\end{align*}
Combining the above estimates, we obtain \eqref{equ_norm} immediately.
\end{proof}

\section{Expressions of some notations in the problem $\ref{pr_main}$ and their estimates}
Firstly, we provide the explicit expressions of matrices $C^a, \tilde{C}(U)$ and vectors $D^a, D^p$ given in \eqref{def_C}. Precisely,
\begin{align*}
	C^a=\left(\begin{array}{cccc}
		-\partial_y(v^a-a^p g^a) & \partial_y(u^a-a^p h^a) & C_{13}^a & C_{14}^a\\
		\partial_x(v^a-a^p g^a) & -\partial_x(u^a-a^p h^a) & C_{23}^a & C_{24}^a\\
		\partial_x h^a+b^p g^a & \partial_y h^a-b^p h^a & C_{33}^a & C_{34}^a\\
		\partial_x g^a &\partial_y g^a & C_{43}^a & C_{44}^a
	\end{array}\right)
	\end{align*}
	with
\begin{align*}
	C^a_{13}=&-2a^p\partial_y(v^a-a^p g^a)-[(a^p)^2-1](\partial_y g^a-b^p g^a)+[\partial_t+(u^a+a^p h^a)\partial_x+(v^a+a^p g^a)\partial_y-\mu\epsilon\triangle-2\mu\epsilon\partial_y^2]a^p\\
&-\epsilon\big\{(3\mu-\kappa)b^p\partial_y a^p+(\mu-\kappa)a^p\big[2\partial_y b^p+(b^p)^2\big]\big\},\\ 
	C^a_{14}=&2a^p\partial_y(u^a-a^p h^a)+[(a^p)^2-1](\partial_y h^a-b^p h^a)-2\epsilon\big[(\mu-\kappa)a^p\partial_x b^p-\mu b^p\partial_xa^p-\mu\partial_{xy}^2a^p \big],\\
	C^a_{23}=&2a^p\partial_x(v^a-a^p g^a)+[(a^p)^2-1]\partial_x g^a+2\epsilon[\mu\partial_{xy}^2a^p+(\mu-\kappa)b^p\partial_x a^p],\\
	C^a_{24}=&-2a^p\partial_x(u^a-a^p h^a)-[(a^p)^2-1]\partial_x h^a+[\partial_t+(u^a+a^p h^a)\partial_x+(v^a+a^p g^a)\partial_y-\mu\epsilon\triangle-2\mu\epsilon\partial_x^2]a^p,\\
	C^a_{33}=&\partial_y(v^a-a^p g^a)-(h^a\partial_x+g^a\partial_y)a^p-2\kappa\epsilon\partial_y b^p,\quad
	C^a_{34}=-\partial_y(u^a-a^p h^a)+2\kappa\epsilon\partial_x b^p,\\
	C_{43}^a=&-\partial_x(v^a-a^p g^a),\quad C^a_{44}=\partial_x(u^a-a^p h^a)-(h^a\partial_x+g^a\partial_y)a^p.
\end{align*}

\begin{align*}
	\tilde C(U)=\left(\begin{array}{cccc}
		\partial_y(\partial_xa^p\cdot \psi) & \partial_y(\partial_ya^p\cdot \psi) & \tilde C_{13}(U) & \tilde C_{14}(U)\\
		 -\partial_x(\partial_xa^p\cdot \psi) & -\partial_x(\partial_ya^p\cdot \psi) & -2a^p\partial_x(\partial_xa^p\cdot\psi) & 2a^p\big[b^p\partial_xa^p \cdot\psi-\partial_x(\partial_ya^p\cdot\psi)\big]\\
		 \partial_xb^p\cdot\psi & \partial_yb^p\cdot\psi & -\partial_y(\partial_xa^p\cdot\psi)
		 -b^p\partial_xa^p\cdot\psi & -\partial_y(\partial_ya^p\cdot\psi )\\
		 0 & 0 & \partial_x(\partial_xa^p\cdot\psi) & \partial_x(\partial_ya^p\cdot\psi)-b^p\partial_xa^p\cdot\psi
	\end{array}\right)
\end{align*}
with
\begin{align*}
	&\tilde C_{13}(U)= 2a^p\big[\partial_y(\partial_xa^p\cdot \psi)+b^p\partial_xa^p\cdot\psi\big]+[(a^p)^2-1]\partial_xb^p\cdot\psi,\\
	&\tilde C_{14}(U)= 2a^p\partial_y(\partial_ya^p\cdot \psi)+[(a^p)^2-1]\partial_yb^p\cdot\psi.
\end{align*}
The vectors $ D^a=( D_i^a)_{1\leq i\leq 4}$ and $D^p=( D_i^p)_{1\leq i\leq 4}$ are given by:
\begin{align*}
	 D^a_1=&-\partial_xa^p\partial_y(u^a-a^p h^a)-(\partial_ya^p+2a^pb^p)\partial_y(v^a-a^pg^a)+[(a^p)^2-1][(h^a\partial_x+g^a\partial_y)b^p-b^p(\partial_y g^a-b^p g^a)]\\
	 &+[\partial_t+(u^a+a^ph^a)\partial_x+(v^a+a^p g^a)\partial_y-\mu\epsilon\triangle]\partial_ya^p+b^p[\partial_t+(u^a+a^ph^a)\partial_x+(v^a+a^p g^a)\partial_y-\mu\epsilon\triangle]a^p\\
	 &-2\mu\epsilon\big(b^p\partial_y^2a^p+\partial_xa^p\partial_xb^p \big)-(\mu-\kappa)\epsilon a^p\big[\triangle b^p+3b^p\partial_yb^p+(b^p)^3\big]-(3\mu-\kappa)\epsilon\partial_ya^p\big[\partial_yb^p+(b^p)^2\big], \\
	 D_2^a=&\partial_xa^p\partial_x(u^a-a^p h^a)+(\partial_ya^p+2a^pb^p)\partial_x(v^a-a^p g^a)+[(a^p)^2-1]b^p\partial_x g^a\\
	 &-[\partial_t+(u^a+a^ph^a)\partial_x+(v^a+a^p g^a)\partial_y-\mu\epsilon\triangle]\partial_xa^p+\epsilon\big\{2\mu b^p\partial_{xy}^2a^p+(\mu-\kappa)\partial_xa^p[\partial_yb^p+(b^p)^2]\big\},\\
	 D_3^a=&b^p\partial_y(v^a-a^p g^a)-\partial_y\big[(h^a\partial_x+g^a\partial_y)a^p\big]+\big[\partial_t+\big(u^a-a^ph^a\big) \partial_x+\big(v^a-a^pg^a-2\kappa\epsilon b^p\big)\partial_y-\kappa\epsilon\triangle\big]b^p,\\
	 D_4^a=&-b^p\partial_x(v^a-a^p g^a)+\partial_x\big[(h^a\partial_x+g^a\partial_y)a^p\big],
\end{align*}
and
\begin{align*}
	&D_1^p=(\partial_ya^p+2a^pb^p)\partial_{xy}^2a^p-\partial_xa^p\partial_y^2a^p+[(a^p)^2-1]b^p\partial_xb^p+2a^p\partial_xa^p(b^p)^2 , \\
	&D_2^p=-(\partial_ya^p+2a^pb^p)\partial_{x}^2a^p+\partial_xa^p\partial_{xy}^2a^p,\\
	& D_3^p=
	-b^p\partial_{xy}^2a^p-\partial_xa^p[\partial_yb^p+(b^p)^2]+\partial_ya^p\partial_xb^p,\\
	&D_4^p=b^p\partial_{x}^2a^p.
\end{align*}

Next, we give the proof of Proposition \ref{prop_coe-est}.

\begin{proof}[Proof of Proposition \ref{prop_coe-est}]
Firstly, we establish the $L^\infty$ estimates in \eqref{est-coe}, in other words, we will show the following uniform estimate in $\epsilon$:  
\begin{align}\label{est-coe1}
\big\|\partial_{tx}^\alpha (A_i^a, A_i^p, B, C^a)(t,\cdot)\big\|_{L^\infty}+\big\|y\partial_{tx}^\alpha D^a(t,\cdot)\big\|_{L^\infty}+\big\|y^2\partial_{tx}^\alpha  D^p(t,\cdot)\big\|_{L^\infty}~=~O(1).
\end{align}
Actually, the above estimate \eqref{est-coe1} is based on the estimates \eqref{est_eta}, \eqref{est_eta1} and the following facts (\textbf{F}):
\begin{enumerate}[(1)]
\item the definition \eqref{NAP} implies $(\mathbf{u^a}, \mathbf{H^a}),~
(\partial_yv^a, \partial_y g^a)~=~O(1);$
	\item by the boundary conditions $(v^a,g^a)|_{y=0}=0,$ the estimates \eqref{est_eta} and the Hardy inequality, it holds
 \begin{align*}
 	\|v^a\partial_ya^p\|_{L^\infty}\leq\|y\partial_ya^p\|_{L^\infty}\big\| \frac{v^a}{y}\big\|_{L^\infty}\lesssim \|y\partial_ya^p\|_{L^\infty}\|\partial_y v^a\|_{L^\infty}=O(1),
 \end{align*}
 and similarly,
 \begin{align*}
 &\|v^ab^p\|_{L^\infty},~\|g^a\partial_ya^p\|_{L^\infty},~\|g^ab^p\|_{L^\infty},~\big\|\big(u^0-u^0|_{y=0}\big)\partial_y a^p\big\|_{L^\infty},~\big\|\big(u^0-u^0|_{y=0}\big)b^p\big\|_{L^\infty},\\
 	&\big\|\big(h^0-h^0|_{y=0}\big)\partial_y a^p\big\|_{L^\infty},~\big\|\big(h^0-h^0|_{y=0}\big)b^p\big\|_{L^\infty}=O(1),
 \end{align*}
 which implies that $\partial_y(v^a-a^p g^a)~=~O(1);$
 \item it follows that from the definition \eqref{eta},
\begin{align*}
	\partial_y(u^a-a^ph^a)=&\partial_y\Big(u^0-u^0|_{y=0}-a^p\big(h^0-h^0|_{y=0}\big)\Big)+\partial_y\big(u^p-a^ph^p\big)+O(1)\\
	=&O(1)+\partial_y\big((1-\chi)u^p\big)=O(1)-\chi' u^p+\frac{1-\chi}{y}\cdot(y\partial_y u^p)=O(1),
\end{align*}
by using $\big|\frac{1-\chi}{y}\big|\leq1$, and similarly,
\begin{align*}
	\partial_y h^a-b^ph^a=&\partial_y\big(h^0-h^0|_{y=0}\big)-b^p\big(h^0-h^0|_{y=0}\big)+\partial_yu^p-b^ph^p+O(1)\\
	=&\partial_y h^0-b^p\big(h^0-h^0|_{y=0}\big)+O(1)
	=O(1);
	\end{align*}
	\item the above three properties also hold for the  derivatives, up to oder three with respect to $t$ and $x$ , of corresponding quantities.   
\end{enumerate}

Secondly, for the source term $E^\epsilon$ given in \eqref{source}, it follows that by virtue of $r_5^\epsilon=\partial_y^{-1}r_3^\epsilon$ given in \eqref{eq_psi},
\begin{align*}
	E^\epsilon~=~\Big(r_1^\epsilon-a^p\cdot r_3^\epsilon-\partial_ya^p\cdot\partial_y^{-1} r_3^\epsilon,r_2^\epsilon+\partial_x(a^p\cdot \partial_y^{-1}r_3^\epsilon),r_3^\epsilon-b^p \cdot\partial_y^{-1}r_3^\epsilon,r_4^\epsilon\Big)^T,
\end{align*}
which implies that by combining \eqref{est_err}, \eqref{est_eta}, \eqref{est_eta1} and the Hardy inequality,
\begin{align}\label{E_est}
	\|\partial_{tx}^\alpha E^\epsilon(t,\cdot)\|_{L^2}~\lesssim~\sum_{1\leq i\leq 4,~|\beta|\leq3}\|\partial_{tx}^\beta r_i^\epsilon(t,\cdot)\|_{L^2}~\leq~ C,\quad |\alpha|\leq 2
\end{align}
for some constant $C>0$ independent of $\epsilon$. Thus, we obtain \eqref{est-coe} by combining \eqref{est-coe1} with \eqref{E_est}.

Next, note that from the definitions of $\tilde A_i(U), i=1,2$ and $\tilde C(U)$, combining with the relations \eqref{def_psi}, \eqref{est_eta},  \eqref{est_eta1} and \eqref{est_psi}, a direct consequence of Lemma \ref{lem_equ} is that for $|\alpha|\leq2$ and $1<p\leq\infty$,
\begin{align*}
	\big\|\partial_{tx}^\alpha\tilde{A}_i(U)(t,\cdot)\big\|_{L^p}\lesssim\sum_{\beta\leq\alpha}\big\|\partial_{tx}^\beta(u,v,h,g)\big\|_{L^p}
	\leq C\sum_{\beta\leq\alpha}\big\|\partial_{tx}^\beta U(t,\cdot)\big\|_{L^p},\quad i=1,2,
\end{align*}
and
\begin{align*}
	\big\|\partial_{tx}^\alpha\tilde{C}(U)(t,\cdot)\big\|_{L^p}\lesssim\epsilon^{-\frac{1}{2}}\sum_{\beta\leq\alpha}\Big[\big\|\partial_{tx}^\beta (h,g)(t,\cdot)\big\|_{L^p}+\big\|y^{-1}\partial_{tx}^\beta\psi(t,\cdot)\big\|_{L^p}\Big]\leq C\epsilon^{-\frac{1}{2}}\sum_{\beta\leq\alpha}\big\|\partial_{tx}^\beta U(t,\cdot)\big\|_{L^p}.
	\end{align*}	
	Thus, we obtain \eqref{est_AU}.
	
\end{proof}

\medskip \noindent
{\bf Acknowledgements:}
 The research of the first author was sponsored by National Natural Science Foundation of China (Grant No. 11743009), Shanghai Sailing Program (Grant No. 18YF1411700) and Scientific Research Foundation of Shanghai Jiao Tong University (Grant No. WF220441906).  
The research of the second author
 was  supported by NSFC (Grant No.11571231). The research
of the third author is  supported by the General Research Fund of Hong Kong, CityU No. 11320016.

\bibliographystyle{plain}

\end{document}